\newtheorem{theorem}{Theorem}%  meant for continuous numbers
\newtheorem{proposition}[theorem]{Proposition}% 
\newtheorem{remark}{Remark}%
\newtheorem{definition}{Definition}%
\newtheorem{lemma}[theorem]{Lemma}
\numberwithin{equation}{section}
\newcommand{\leb}[1]{\mathop \le \limits_{(#1)}}
\newcommand{\coex}[2][\hphantom{longexplanation}]{%
  \underset{{#1}}{\overset{\hphantom{longexplanation}}{#2}}
}
\newcommand{\hk}{\hspace*{.12in}}
\def\beq{\begin{eqnarray*}}\def\eeq{\end{eqnarray*}}
\def\bq{\begin{equation}}\def\eq{\end{equation}}
\newcommand{\vuong}{\square}
\newcommand{\NN}{{\mathbb N}}
\newcommand{\RR}{{\mathbb R}}
\newcommand{\email}[1]{\href{mailto:#1}{\texttt{#1}}}
\newcommand{\abs}[1]{\left\lvert #1\right\rvert}
\newcommand{\norm}[1]{\left\lVert #1\right\rVert}
\newcommand{\cv}[1][]{%
\ifthenelse{\isempty{#1}}{\xrightarrow[\hphantom{~2~}]{}}{\xrightarrow[\hphantom{~2~}]{#1}}%
}
\newcommand{\wcv}[1][]{%
\ifthenelse{\isempty{#1}}{\xrightharpoonup[\hphantom{~2~}]{}}{\xrightharpoonup[\hphantom{~2~}]{#1}}%
}
\DeclareMathOperator{\supp}{supp}
\DeclareMathOperator{\sign}{{sign}}
\DeclareMathOperator{\dist}{{dist}}
\def\XXint#1#2#3{{\setbox0=\hbox{$#1{#2#3}{\int}$ }
\vcenter{\hbox{$#2#3$ }}\kern-.6\wd0}}
\DeclarePairedDelimiterX\Set[1]\{\}{%
  #1%
}
\begin{document}

\title{Global boundedness of solutions of degenerate and
  non-uniform parabolic equations}

\author{ Thuyen Dang\footnote{
    Department of Statistics/Committee on
    Computational and Applied Mathematics, University of Chicago, 5747
    S. Ellis Avenue, Chicago, Illinois 60637, USA
    (\email{thuyend@uchicago.edu} (preferred email)),
  } \footnote{Department of Mathematics and Statistics, Purdue University Northwest, 2200 169th Street, Hammond, Indiana 46323, USA
  (\email{ttdang@purdue.edu}).}
\and
  Duong Minh Duc\footnote{
    Department of Mathematics and Computer Science, Vietnam
    National University Ho Chi Minh City--University of Science, 227
    Nguyen Van Cu Street, Phuong Cho Quan, Ho Chi Minh City, Vietnam
  (\email{dmduc@hcmus.edu.vn}).}
}
\date{}

\maketitle

\begin{abstract}
  Let $2 \le N\in\NN$, $\Omega$ be a bounded open in $\RR^{N}$,
  $T\in (0,\infty)$, $Q=\Omega\times (0,T)$, $u$ be a weak solution of
  parabolic equation
  $\displaystyle \frac{\partial u}{\partial t} -Lu= f$, where $L$ is
  an elliptic operator on a space of functions on $Q$. The
  coefficients of $L$ may be not bounded, not strictly nor uniformly
  elliptic, and not of Muckenhoupt type. We obtain global boundedness
  of $u$.  Our result can be applied to $u$, which may vanish on
  $(A\times (0,T))\cup (\Omega\times \{0\})$ of the boundary of $Q$
  and is free outside this set.
\end{abstract}

\paragraph{Keywords} Regularity, global boundedness, parabolic,
degenerate, non-uniform.
\paragraph{MSC Classification} 26D10, 35J70, 35J75, 35J08, 35J15.

%\tableofcontents{}

\section{Introduction}

Let 
\begin{itemize}
\item $\Omega$ be a bounded open subset of the Euclidean space $\RR^{N}$
with $2 \le N \in \NN$ and $\partial\Omega$ be its boundary;
\item  $A$ be a
subset of $\partial\Omega$ with non-zero (surface) measure;
\item $T\in (0,\infty)$ and  $Q\coloneqq\Omega\times (0,T)$.
\item $\bar{t}\in \left(\frac{2N^{2}+2N-2}{N^{2}+2N-1},2 \right)$,
  $r\coloneqq \frac{\bar{t}(N+1)-2}{N-\bar{t}}>2$,
  $\overline{r}\in (2,r)$;
\item $b_{ij}$ be measurable functions on $\Omega$ for every
  $i,j \in \{1,\ldots,N\}$ such that  $b_{ij}= b_{ji}$, and
  $B\coloneqq\{b_{ij}\}_{1 \le i,j \le N}$;
\item  $b$ and
  $\overline{b}$ be non-negative measurable functions on $\Omega$.% , and
  % \begin{align*}
  % \data \coloneqq \left\{
  %   \norm{b^{-1}}_{L^{\frac{\bar{t}}{2-\bar{t}}}(\Omega)}, \norm{\bar{b}}_{L^{\frac{r}{r-2}}(\Omega)} \right\}.
  % \end{align*}
\end{itemize}

We consider following conditions 
\begin{subequations}
  \label{eq:ccc}
  \begin{align}
  &b(x)|\xi|^{2}\le b_{ij}(x)\xi_{i}\xi_{j}\le \overline{b}(x)\abs{\xi}^{2}\qquad\forall~\xi=(\xi_{1},\cdots,\xi_{N})\in \RR^{N},~ x\in \Omega,
\label{c1}\\
&b^{-1}\in L^{\frac{\overline{t}}{2-\overline{t}}}(\Omega),
\label{c3}\\
&\overline{b}\in L^{\frac{r}{r-2}}(\Omega).
\label{c6}
\end{align}
\end{subequations}
Define
\begin{equation}
  Lu \coloneqq - \sum_{i,j=1}^{N}\frac{\partial}{\partial x_{i}}
  \left(b_{ij} \frac{\partial u}{\partial x_{j}} \right).
\label{c5}
\end{equation}
%where we use the Einstein summation convention.

Let $W_{B,A,T}(Q)$ and $V_{B,A,T}(Q)$ be as in Definition \ref{d44}.
If $B=\{\delta^{i}_{j}\}_{i,j}$ and $A = \partial\Omega$, here
$\delta_j^i$ is the Kronecker notation, then
$V_{B,A,T}(\Omega)$ contains the usual Sobolev's space
$W^{1,2}_{0}(Q)$.

Let $f\in L^{1}(Q)$ and $u$ be in $V_{B,A,T}(Q)$. We say
$\displaystyle \frac{\partial u}{\partial t} -Lu=f$ in the weak sense,
if
\begin{equation}
  \int_{Q} \left(\frac{\partial u}{\partial t}\varphi+
    \sum_{i,j=1}^{N}b_{ij}\frac{\partial u}{\partial x_{i}}\frac{\partial
      \varphi}{\partial x_{j}} \right)dz = \int_{Q}f\varphi\, dz,
  \qquad \forall~\varphi \in W_{B,A,T}(Q).
 \label{eq}
\end{equation}

In this paper we obtain the boundedness of $u$ as follows

\begin{theorem}[Main theorem]
  Let $A$ be  admissible with respect to $\Omega$ and assume
\eqref{eq:ccc} holds. 
Let $0 < a\in L^{\frac{\overline{r}}{\overline{r}-2}}(Q)$, and $a_{0}$, $a_{1}$, $a_{2}$ be
non-negative measurable functions on $Q$ such that
\begin{equation}
b^{-1} a_{0}^{2}+a_{1}+ a_{2}\le a,
\label{th1:cc8}
\end{equation}
and
\begin{equation}
 \abs{f\varphi}\le \left(a_{0}\abs{\nabla_{x} u}+a_{1}\abs{u}  +a_{2} \right)\abs{\varphi},\qquad\forall~\varphi\in W_{B,A,T}(Q).
\label{elz},
\end{equation}
where $u\in V_{B,A,T}(Q)$ is the weak solution of \eqref{eq}.

Then there exist $\delta > 0$ and $c\left(r,\alpha,Q,\norm{u}_{L^{\alpha}(Q)} \right) > 0$ such that
whenever $u \in  L^{\alpha}(Q)$
with some $\alpha > \bar{r}\left( \frac{2}{3}-\delta
  \right)$, we have $u \in L^{\infty}(Q)$ and
\begin{equation}
  \norm{u}_{L^{\infty}(Q)}\le c \left(r,\alpha,\Omega,\norm{u}_{L^{\alpha}(Q)} \right). \label{th1:le460abc}
 \end{equation}

\label{th1}
\end{theorem}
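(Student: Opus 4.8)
The plan is to run a De Giorgi--Moser iteration adapted to the degenerate, non-uniformly elliptic structure of $L$. First I would replace $|u|$ by $\widetilde w\coloneqq|u|+1\ge1$ and, for an exponent $p$ in a range to be determined, test \eqref{eq} with a level-truncated version $\varphi_{p,M}$ of $|u|^{p-2}u$ (legitimate in $W_{B,A,T}(Q)$ after truncation; the truncation is removed at the end by monotone convergence, which is why $u$ need not be assumed bounded a priori). The parabolic term gives $\tfrac1p\,\partial_\tau\!\int_\Omega|u|^p$, whose initial slice drops out because $V_{B,A,T}(Q)$ builds in the vanishing of $u$ on $\Omega\times\{0\}$; the principal part, by the lower bound in \eqref{c1}, dominates $\tfrac{4(p-1)}{p^{2}}\int_\Omega b\,\bigl|\nabla_x(|u|^{p/2})\bigr|^{2}$. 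On the right, \eqref{elz} controls $|f\varphi|$ by $(a_0|\nabla_xu|+a_1|u|+a_2)|\varphi|$; I would absorb the $a_0|\nabla_xu|$ term into the principal part via Young's inequality, $a_0|\nabla_xu|\,|u|^{p-1}\le\eta\,b\,|u|^{p-2}|\nabla_xu|^{2}+C_\eta\,b^{-1}a_0^{2}\,|u|^{p}$, and then invoke \eqref{th1:cc8} to bound $b^{-1}a_0^{2}+a_1+a_2$ by $a$. With $v\coloneqq\widetilde w^{\,p/2}$ this produces an energy inequality
\[
\esssup_{0<\tau<T}\int_\Omega v(\tau)^{2}\,dx+\int_Q b\,|\nabla_x v|^{2}\,dz\;\le\;C\,p^{2}\Bigl(\int_Q a\,v^{2}\,dz+1\Bigr).
\]

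Next I would feed this into the degenerate Sobolev-type inequality for $V_{B,A,T}(Q)$ established earlier in the paper --- this is where conditions \eqref{c3} on $b^{-1}$ and \eqref{c6} on $\overline b$, together with the admissibility of $A$, are used --- which (combining, if necessary, the spatial weighted Sobolev--Poincaré estimate with the uniform-in-time $L^2$ bound) converts the left-hand side into a single space--time norm with a gain, $\norm{v}_{L^{r}(Q)}^{2}\le C\bigl(\esssup_\tau\int_\Omega v(\tau)^{2}+\int_Q b\,|\nabla_x v|^{2}\bigr)$, so that
\[
\norm{v}_{L^{r}(Q)}^{2}\;\le\;C\,p^{2}\Bigl(\int_Q a\,v^{2}\,dz+1\Bigr).
\]
Then Hölder with exponents $\tfrac{\overline r}{\overline r-2},\tfrac{\overline r}{2}$ and $a\in L^{\overline r/(\overline r-2)}(Q)$ give $\int_Q a\,v^{2}\le\norm{a}_{L^{\overline r/(\overline r-2)}(Q)}\norm{v}_{L^{\overline r}(Q)}^{2}$, and since $2<\overline r<r$ I would interpolate $\norm{v}_{L^{\overline r}(Q)}\le\norm{v}_{L^{2}(Q)}^{1-\theta}\norm{v}_{L^{r}(Q)}^{\theta}$ and absorb the $\norm{v}_{L^{r}(Q)}$-factor on the left by Young's inequality, at the cost of a constant polynomial in $p$. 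Rewriting in terms of $\widetilde w$ and using $\widetilde w\ge1$ (so the additive ``$+1$'' is dominated and the iteration stays purely multiplicative), this is the one-step improvement
\[
\norm{\widetilde w}_{L^{rp/2}(Q)}\;\le\;\bigl(C\,p^{\beta}\bigr)^{1/p}\,\norm{\widetilde w}_{L^{\overline r p/2}(Q)},
\]
a genuine gain because $r>\overline r$; note the right-hand side --- in particular $\int_Q a\,v^{2}$ before absorption --- is finite precisely because $u\in L^{\overline r p/2}(Q)$ (and $p<\overline r p/2$ gives $u\in L^p(Q)$, so $\norm{v}_{L^2(Q)}<\infty$).

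Finally I would iterate. Setting $q_0\coloneqq\alpha$, $q_{k+1}\coloneqq(r/\overline r)\,q_k$ and $p_k\coloneqq 2q_k/\overline r$, the previous estimate reads $\norm{\widetilde w}_{L^{q_{k+1}}(Q)}\le(C\,p_k^{\beta})^{1/p_k}\norm{\widetilde w}_{L^{q_k}(Q)}$; since $p_k$ grows geometrically the product $\prod_k(C\,p_k^{\beta})^{1/p_k}$ converges, while $q_k\to\infty$, so in the limit $\norm{u}_{L^{\infty}(Q)}\le\norm{\widetilde w}_{L^{\infty}(Q)}\le c\,\norm{\widetilde w}_{L^{\alpha}(Q)}$, which is \eqref{th1:le460abc}. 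The hypothesis $\alpha>\overline r\bigl(\tfrac23-\delta\bigr)$ enters exactly at the first step: it uses the exponent $p_0=2\alpha/\overline r$, and the testing-and-absorption scheme is available only for $p$ above a fixed threshold $p_\star>1$ determined by the structure (the principal part carries the factor $(p-1)/p^{2}$, which degenerates as $p\to1^{+}$, and the Young/interpolation constants must stay controlled); the requirement $p_0>p_\star$ is exactly $\alpha>\tfrac{\overline r}{2}p_\star=\overline r(\tfrac23-\delta)$.

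\emph{Main obstacle.} The technical heart, beyond verifying that the truncated test functions genuinely lie in $W_{B,A,T}(Q)$ and that one may pass to the limit in the truncation within these degenerate spaces, is the careful tracking of the $p$-dependence of the constant $C\,p^{\beta}$ through the interpolation-plus-absorption step: one must pin down the admissible range of starting exponents (hence the precise threshold $\overline r(\tfrac23-\delta)$) and confirm that the resulting infinite product still converges.
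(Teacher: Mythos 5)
Your proposal follows essentially the same route as the paper: a Moser iteration with truncated power-type test functions $\varphi\approx|u|^{p-2}u$ (the paper's $G_{s,l}=F_{s,l}F'_{s,l}$ with $p=2s$), absorption of the $a_{0}|\nabla_{x}u|$ term by Young's inequality via \eqref{th1:cc8}, the weighted Sobolev inequality coming from admissibility of $A$ together with \eqref{c3}--\eqref{c6} to produce the gain $L^{\overline r}\to L^{r}$, removal of the truncation by monotone convergence, and iteration with ratio $\kappa=r/\overline r$. The only divergences are cosmetic (the paper drops the nonnegative $\int_{Q}\partial_{t}(\eta^{2}v^{2})$ term rather than keeping an $\esssup_{t}$, and the precise threshold $\overline r(\tfrac23-\delta)$ arises there from the $C^{1}$-smoothing of $|t|^{s}$ near the origin for $s\le1$, Lemma \ref{lemfs1b}, rather than from the factor $(p-1)/p^{2}$ alone), which you correctly flag as the point needing careful verification.
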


By Remarks \ref{r4} and \ref{r4b} in Section \ref{bo}, our result applies to the case that $\overline{b}$ is not in the Muckenhoupt
class $A_{\frac{N+2}{N}}$, and $L$ is degenerate and non-uniformly
elliptic. In \cite{ducGlobalHolderContinuity}, we obtained the similar
results for elliptic equations. If $b$ and $\overline{b}$ are constants
and $A=\partial Q$, then Theorem \ref{th1} was proved in
\cite{ladyzenskajaLinearQuasilinearEquations1968}. We refer the
readers to the following papers and references therein:
\cite{defilippisSketchesNonuniformlyElliptic2025,defilippisNonuniformlyEllipticSchauder2023}--for the regularity theory of non-uniform equations, and
\cite{dongNondivergenceFormDegenerate2024,dongDegenerateLinearParabolic2023}--for degenerate parabolic equations. Our theoretical result is
motivated by the model of high-contrast materials, see
\cite{armstrongRenormalizationGroupElliptic2025} for state of the art
of this problem in homogenization setting.

The plan of our paper is as follows.  In Sections \ref{wi} and
\ref{app}, we introduce and study the properties of function spaces,
which are used in the proof of Theorem \ref{th1}. We shall use the
techniques of Moser in \cite{moserHarnackInequalityParabolic1964}, but
these techniques are only applicable to local regularity. Therefore we
have to modify auxiliary functions in
\cite{moserHarnackInequalityParabolic1964} in Section \ref{aux}.  In
Section \ref{bo} we obtain the global boundedness of solutions of
parabolic equations, in particular, we will prove Proposition
\ref{pro46}, which implies Theorem \ref{th1}.

\section{Function spaces}\label{wi}

In this section, let $N\ge 2$, $\Omega$ be a bounded open in
$\RR^{N}$, $\bar{t}\in \left(\frac{2N^{2}+2N-2}{N^{2}+2N-1},2 \right)$,
$r=\frac{\bar{t}(N+1)-2}{N-\bar{t}}$, $\bar{t}^{\ast}=\frac{\bar{t}N}{N-\bar{t}}$. Suppose $b_{ij}$,
$B$, $b$, and $\bar{b}$ satisfy \eqref{eq:ccc}. We have
\begin{align*}
  r
  = \frac{\bar{t}N-(2-\bar{t})}{N-\bar{t}}< \bar{t}^{\ast},
\end{align*}
and
\begin{align*}
  r-2
  &\coex{=} \frac{\bar{t}N-(2-\bar{t}) -2(N-\bar{t})}{N-\bar{t}}\\
  &\coex{=} \frac{\bar{t}(N+3) -2(N+1)}{N-\bar{t}}\\
&\coex[\left(\bar{t}>\frac{2N^{2}+2N-2}{N^{2}+2N-1}\right)]{>} \frac{2}{(N-\bar{t})(N^{2}+2N-1)}[(N^{2}+N-1)(N+3)-(N+1)(N^{2}+2N-1)]\\
  &\coex{=}\frac{2(N^{2}+N-2)}{(N-\bar{t})(N^{2}+2N-1)}\\
  &\coex[(N\ge 2)]{>} 0.
\end{align*}
Thus
\begin{equation}
  1< t<2 < r <t^{\ast}.
 \label{rt}
\end{equation}

\begin{definition}
  Define
  \begin{itemize}
  \item $\displaystyle \nabla_{x}v(x,t)\coloneqq\left(\frac{\partial v}{\partial
    x_{1}}(x,t),\cdots,\frac{\partial v}{\partial x_{N}}(x,t)
\right)$.
\item $\displaystyle\nabla v(x,t)\coloneqq\left(\frac{\partial v}{\partial
    x_{1}}(x,t),\cdots,\frac{\partial v}{\partial x_{N+1}}(x,t) \right)$
  where
  $\displaystyle \frac{\partial v}{\partial x_{N+1}}\coloneqq\frac{\partial
    v}{\partial t}$.
\item $C^{1}(\Omega,A)$:~ the family of function
  $v|_{\Omega}$ with $v$ in $C_{c}^{1}(\RR^{N})$ such that
  $ A\cap \supp (v) = \varnothing$.
\item $C^{1}(Q,A)$:~ the family of
  function $w\vert_{Q}$ with $w$ in $C_{c}^{1}(\RR^{N+1})$ such that
  \begin{align*}
  \supp(w)\cap [(A\times [0,T])\cup (\overline{\Omega}\times \left\{0\right\})]
  =\varnothing.
  \end{align*}
\item  $\displaystyle\norm{v}_{C,B} \coloneqq
  \left\{\int_{\Omega}\sum_{i,j=1}^{N}\frac{\partial v}{\partial
    x_{i}}\frac{\partial v}{\partial x_{j}}b_{ij}dx\right\}^{\frac{1}{2}}
  \qquad\forall~v \in C^{1}(\Omega,A).$
\item  $\displaystyle\norm{v}_{C,b} \coloneqq \left\{\int_{\Omega}|\nabla _{x}
  v|^{2}bdx\right\}^{\frac{1}{2}} \qquad\forall~v \in
C^{1}(\Omega,A).$
\item  $\displaystyle\norm{v}_{C,B,T} \coloneqq
  \left\{\int_{0}^{T}\int_{\Omega}\sum_{i,j=1}^{N}\frac{\partial
    v}{\partial x_{i}}(x,t)\frac{\partial v}{\partial
    x_{j}}(x,t)b_{ij}(x)dxdt\right\}^{\frac{1}{2}} \qquad\forall~v \in
  C^{1}(Q,A).$
\item  $\displaystyle\norm{v}_{C,b,T} \coloneqq
  \left\{\int_{0}^{T}\int_{\Omega}|\nabla_{x}v|^{2}(x,t)
  b(x)dxdt\right\}^{\frac{1}{2}} \qquad\forall~v \in C^{1}(Q,A).$
\item  $\displaystyle\norm{v}_{C,V,B,T} \coloneqq \norm{\frac{\partial v}{\partial  t}}_{L^{2}(Q)} +\norm{v}_{C,B,T} \qquad\forall~v \in C^{1}(Q,A).$
  \end{itemize}
    \label{space}
 \end{definition}
  We have
  \begin{align}
    \norm{w}_{C,b}
    &\coex[\eqref{c1}]{\le} \norm{w}_{C,B}\qquad\forall~w \in C^{1}(\Omega,A),
  \label{i1}\\
  \norm{w}_{C,b,T}
  &\coex[\eqref{c1}]{\le} \norm{w}_{C,B,T}\qquad\forall~w \in C^{1}(Q,A).
  \label{i1b}
\end{align} 
  \begin{definition} Let  $A\subset \partial\Omega$. We say ~$A$   is admissible with respect to $\Omega$, if there is a positive real number $C(r,\Omega,A)>0$ such that
  \begin{equation}\left\{\int_{\Omega} |v|^{r}dx\right\}^{\frac{1}{r}}\le C(r,\Omega,A)\left\{\int_{\Omega} |\nabla v|^{\overline{t}}dx\right\}^{\frac{1}{\overline{t}}} \qquad\forall ~v \in C^{1}(\Omega,A).
\label{ineq}
\end{equation}   
  \label{d41}
 \end{definition} \hk   
 We have the following result.
 Let $\Sigma_{N-1}$ be the unit sphere of $\RR^{N}$. We denote the canonical measure on $\Sigma_{N-1}$ by $\sigma$ (see  \cite[\S III.3]{chavelRiemannianGeometry2006}). We have the following examples of admissible subsets.
  \begin{proposition}  $A$ is admissible with respect to $\Omega$ in
    following cases. 
    \begin{itemize}[wide]
    \item[(i)] $\Omega$ is convex and $A=\partial\Omega$.
    \item[(ii)] $\Omega=(0,1)^{N}$ and $A =\partial\Omega \setminus
      (\{0\}\times[0,1]^{N-1})$.
    \item[(iii)] $\Omega$ is connected and of class $C^{1}$ and $\mu(A)>0$, where $\mu$ is the Riemannian measure on $\partial\Omega$ (see definition in \cite[pp.120-121]{chavelRiemannianGeometry2006}).
    \end{itemize}
 \label{admissible}
  \end{proposition}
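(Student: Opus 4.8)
The plan is to verify in each of the three cases the Sobolev--Poincar\'e inequality \eqref{ineq} of Definition \ref{d41}, taking advantage of the fact that $r<\bar{t}^{\ast}=\frac{N\bar{t}}{N-\bar{t}}$ (recorded in \eqref{rt}): only a \emph{subcritical} embedding is needed, so the Rellich--Kondrachov compactness theorem is available and no sharp Sobolev constant is required. In case (i), if $A=\partial\Omega$ and $v\in C^{1}(\Omega,A)$ then $v=v|_{\Omega}$ for some $v\in C^{1}_{c}(\RR^{N})$ with $\supp(v)\cap\partial\Omega=\varnothing$; hence $\supp(v)\cap\overline{\Omega}$ is a compact subset of the open set $\Omega$, so $v|_{\Omega}$ is supported in a compact subset of $\Omega$ and, extended by zero, defines an element of $C^{1}_{c}(\RR^{N})$ supported in $\Omega$. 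The Gagliardo--Nirenberg--Sobolev inequality gives $\norm{v}_{L^{\bar{t}^{\ast}}(\RR^{N})}\le c(N,\bar{t})\norm{\nabla v}_{L^{\bar{t}}(\RR^{N})}$, and since $\Omega$ is bounded and $r<\bar{t}^{\ast}$, H\"older's inequality yields $\norm{v}_{L^{r}(\Omega)}\le\abs{\Omega}^{1/r-1/\bar{t}^{\ast}}\norm{v}_{L^{\bar{t}^{\ast}}(\Omega)}\le c(N,\bar{t})\abs{\Omega}^{1/r-1/\bar{t}^{\ast}}\norm{\nabla v}_{L^{\bar{t}}(\Omega)}$; in fact this step needs only that $\Omega$ be bounded.

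For case (ii), with $\Omega=(0,1)^{N}$ and $A=\partial\Omega\setminus(\{0\}\times[0,1]^{N-1})$, I would reflect evenly across $\{x_{1}=0\}$: for $v\in C^{1}(\Omega,A)$ set $\tv(x_{1},x')\coloneqq v(\abs{x_{1}},x')$ on $\widetilde\Omega\coloneqq(-1,1)\times(0,1)^{N-1}$. Since $v$ vanishes in a neighborhood of $A$, $\tv$ vanishes in a neighborhood of $\partial\widetilde\Omega$; the even reflection is Lipschitz (the $x_{1}$-derivative flips sign) but compactly supported in $\widetilde\Omega$, hence $\tv\in W^{1,\bar{t}}_{0}(\widetilde\Omega)$, which is all the full-space Sobolev inequality requires. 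One has $\norm{\tv}_{L^{r}(\widetilde\Omega)}=2^{1/r}\norm{v}_{L^{r}(\Omega)}$ and $\norm{\nabla\tv}_{L^{\bar{t}}(\widetilde\Omega)}=2^{1/\bar{t}}\norm{\nabla v}_{L^{\bar{t}}(\Omega)}$, so applying the argument of case (i) on $\widetilde\Omega$ produces \eqref{ineq} for $v$ with $C(r,\Omega,A)=2^{1/\bar{t}-1/r}\,c(N,\bar{t})\,\abs{\widetilde\Omega}^{1/r-1/\bar{t}^{\ast}}$.

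For case (iii) I would argue by contradiction. If \eqref{ineq} fails, there is a sequence $(v_{n})\subset C^{1}(\Omega,A)$ with $\norm{v_{n}}_{L^{r}(\Omega)}=1$ and $\norm{\nabla v_{n}}_{L^{\bar{t}}(\Omega)}\to 0$; then $(v_{n})$ is bounded in $W^{1,\bar{t}}(\Omega)$. Since $\Omega$ is bounded and of class $C^{1}$ it is a $W^{1,\bar{t}}$-extension domain, so by Rellich--Kondrachov (using $r<\bar{t}^{\ast}$) a subsequence converges in $L^{r}(\Omega)$ to some $v$ with $\norm{v}_{L^{r}(\Omega)}=1$. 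Because $\bar{t}<2<r$ by \eqref{rt} and $\Omega$ is bounded, $v_{n}\to v$ also in $L^{\bar{t}}(\Omega)$, and together with $\nabla v_{n}\to 0$ in $L^{\bar{t}}(\Omega)$ this gives $v_{n}\to v$ in $W^{1,\bar{t}}(\Omega)$ with $\nabla v=0$; connectedness of $\Omega$ forces $v\equiv c$ where $\abs{c}\,\abs{\Omega}^{1/r}=1$, so $c\ne 0$. The trace operator $W^{1,\bar{t}}(\Omega)\to L^{\bar{t}}(\partial\Omega,\mu)$ is continuous, and the trace of each $v_{n}$ vanishes on $A$ since $\supp(v_{n})\cap A=\varnothing$; passing to the limit (along a further subsequence converging $\mu$-a.e.\ on $\partial\Omega$) yields $c=0$ $\mu$-a.e.\ on $A$, contradicting $\mu(A)>0$. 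Hence \eqref{ineq} holds for some $C(r,\Omega,A)>0$.

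The only delicate point is the final step in case (iii): one must be sure the constraint ``$v$ vanishes on $A$'' survives the limit. This is exactly where it matters that the $L^{r}$-convergence provided by compactness, combined with $\nabla v_{n}\to0$, upgrades to strong $W^{1,\bar{t}}$-convergence, and that the $C^{1}$-regularity of $\partial\Omega$ makes the trace map continuous on $W^{1,\bar{t}}(\Omega)$; the subcriticality $r<\bar{t}^{\ast}$ in \eqref{rt} is precisely what makes the compact embedding usable. Cases (i) and (ii) are then routine, the minor care there being that even reflection produces only a Lipschitz function, for which membership in $W^{1,\bar{t}}_{0}$ of the enlarged cube suffices.
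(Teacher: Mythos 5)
Your proposal is correct in substance, but cases (i) and (ii) follow a genuinely different route from the paper's. There, the authors derive the pointwise Riesz-potential bound $\abs{u(x)}\le \sigma(\Sigma_{N-1})^{-1}\int_{\Omega}\abs{\nabla u(y)}\,\abs{y-x}^{1-N}dy$ by integrating $\nabla u$ along rays from $x$ to the accessible part of the boundary (convexity, resp.\ the half-sphere of directions with $w_{1}>0$, guarantees every such ray terminates in $A$) and then invoke the Hardy--Littlewood--Sobolev theorem. You instead observe that when $A=\partial\Omega$ every $v\in C^{1}(\Omega,A)$ vanishes near $\partial\Omega$ and so extends by zero to an element of $C^{1}_{c}(\RR^{N})$ supported in $\Omega$, after which the Gagliardo--Nirenberg--Sobolev inequality plus H\"older (using $r<\bar{t}^{\ast}$ from \eqref{rt}) gives \eqref{ineq} directly, and you reduce the cube to this situation by even reflection. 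Your route is shorter and, as you note, shows convexity is irrelevant in (i); the paper's ray argument has the advantage of treating (i) and (ii) uniformly with no extension or reflection machinery. Your case (iii) is essentially the paper's own compactness argument (normalized sequence, Rellich--Kondrachov, continuity of the trace, $\nabla v=0$ plus connectedness forcing a nonzero constant whose trace must vanish on a set of positive $\mu$-measure).

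One point in (ii) needs repair. Since $A=\partial\Omega\setminus(\{0\}\times[0,1]^{N-1})$ is not closed, the condition $\supp(v)\cap A=\varnothing$ does not force $v$ to vanish in a neighborhood of $\overline{A}$: $v$ may be nonzero at interior points arbitrarily close to the edge $\{0\}\times\partial([0,1]^{N-1})\subset\overline{A}\setminus A$. Consequently the reflection $\tv$ need not vanish in a neighborhood of $\partial\widetilde{\Omega}$ and need not be compactly supported in $\widetilde{\Omega}$, so the justification you give for $\tv\in W^{1,\bar{t}}_{0}(\widetilde{\Omega})$ fails as stated. The conclusion survives: $v=0$ on $A$, hence on $\overline{A}$ by continuity, so $\tv$ is Lipschitz on $\overline{\widetilde{\Omega}}$ and vanishes pointwise on all of $\partial\widetilde{\Omega}$, and such functions do belong to $W^{1,\bar{t}}_{0}(\widetilde{\Omega})$ (e.g.\ by the characterization in Theorem 9.17 of \cite{brezisFunctionalAnalysisSobolev2011}, or by truncating at level $\frac{1}{n}$ to produce approximations supported in compact subsets of $\widetilde{\Omega}$). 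With that substitution the rest of your argument for (ii) goes through.
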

  \begin{proof}
 Let $u\in C^{1}(\Omega,A)$. Fix $x\in \Omega$, $\omega\in \Sigma_{N-1}$ and $r_{\omega}\in(0,\infty)$ such that $x'=x+r_{\omega}\omega\in  A$ and $x+sw\in \Omega$ for every $s\in [0,r_{x})$. We have
 \begin{align}
   \label{az11}
   \begin{split}
     |u(x)|
   &=\abs{u(x)-u(x+r_{\omega}\omega)}\\
   &=\abs{\int_{0}^{r_{\omega}}\frac{du(x+\xi\omega)}{d\xi}d\xi}\\
   &= \abs{\int_{0}^{r_{\omega}}\nabla u(x+\xi\omega)\cdot \omega d\xi}\\
   &\le  \int_{0}^{r_{\omega}} \abs{\nabla
     u(x+\xi\omega)}d\xi\\
   &=\int_{0}^{r_{\omega}}\frac{|\nabla
  u(x+\xi\omega)|}{|x+\xi\omega -x|^{N-1}}\xi^{N-1}d\xi.
   \end{split}
\end{align}

\begin{itemize}[wide]
\item[$(i)$] Since $\Omega$ is convex, we have
 \[\Omega\setminus \{x\}= \bigcup_{\omega\in\Sigma_{N-1}}\{x+sw: s\in
   (0,r_{\omega})\}.\]
 Using the polar coordinate with the pole at $x$, we get 
 \begin{align}
   \label{a11}
   \begin{split}
     |u(x)|
     &\coex{=}\frac{1}{\sigma(\Sigma_{N-1})}\int_{\Sigma_{N-1}}|u(x)|d\omega\\ &\coex[\eqref{az11}]{\le}\frac{1}{\sigma(\Sigma_{N-1})}\int_{\Sigma_{N-1}}\int_{0}^{r_{\omega}}\frac{|\nabla
     u(x+\xi\omega)|\xi^{N-1}}{|x+\xi\omega -x|^{N-1}}d\xi
                                                                          d\omega\\
     &\coex{=}\frac{1}{\sigma(\Sigma_{N-1})}\int_{\Omega}\frac{|\nabla
     u(y)|}{|y -x|^{N-1}}dy.
   \end{split}
\end{align}
By Hardy–Littlewood–Sobolev Theorem in \cite[p.35]{linaresIntroductionNonlinearDispersive2015} and \eqref{a11}, there is a positive constant $c(N,\bar{t})$ such that
\begin{align*}
\left\{\int_{\Omega}
  |u|^{r}dx\right\}^{\frac{1}{r}}
  &\coex[\text{H\"{o}lder}]{\le}
  |\Omega|^{\frac{\bar{t}^{\ast}-r}{r\bar{t}^{\ast}}} \left\{\int_{\Omega}
    |u|^{\bar{t}^{\ast}}dx\right\}^{\frac{1}{\bar{t}^{\ast}}}\\
  &\coex[\substack{\text{H–L-S Theorem}\\ \text{and } \eqref{a11}}]{\le} \frac{c(N,\bar{t})|\Omega|^{\frac{\bar{t}^{\ast}-r}{r\bar{t}^{\ast}}} }{\sigma(\Sigma_{N-1})}\left\{\int_{\Omega} |\nabla u|^{\bar{t}}dx\right\}^{\frac{1}{\bar{t}}}.
\end{align*}
Thus  we get \eqref{ineq}.
\item[$(ii)$]   Put $\overline{\Sigma}_{N-1} =\{(w_{1},\ldots,w_{N})\in \Sigma_{N-1}: w_{1}>0\}$. Let $x\in \Omega$ and $w= (w_{1},\ldots, w_{N})
    \in \overline{\Sigma}_{N-1}$. Then $w_{1}>0$. Put $r_{w}$ as in
    the proof of $(i)$. Since $x+r_{w}w= (x_{1}+r_{w}w_{1},\ldots,
    x_{N}+r_{w}w_{N}$, $x_{1}>0$ and $w_{1}>0$, we have
    $x_{1}+r_{w}w_{1}>0$ and  $x+r_{w}w\in \partial\Omega \setminus
    (\{0\}\times[0,1]^{N-1})=A$.
    
    Put
 \[ X= \bigcup_{w\in \overline{\Sigma}_{N-1}} \{x+sw: s \in (0,r_{w})\}.\]  
  Then $X$ is open subset of $\Omega$. Thus we can use the polar coordinate with the pole at $x$ and get   
  \begin{align}
    \label{a11b}
    \begin{split}
      |u(x)|
      &=\frac{1}{\sigma(\overline{\Sigma}_{N-1})}\int_{\overline{\Sigma}_{N-1}}|u(x)|d\omega\\
      &\leb{\eqref{az11}}\frac{1}{\sigma(\overline{\Sigma}_{N-1})}\int_{\overline{\Sigma}_{N-1}}\int_{0}^{r_{\omega}}\frac{|\nabla u(x+\xi\omega)|\xi^{N-1}}{|x+\xi\omega -x|^{N-1}}d\xi d\omega\\  &=\frac{1}{\sigma(\overline{\Sigma}_{N-1})}\int_{X}\frac{|\nabla
        u(y)|}{|y -x|^{N-1}}dy\\ &\le\frac{1}{\sigma(\overline{\Sigma}_{N-1})}\int_{\Omega}\frac{|\nabla
        u(y)|}{|y -x|^{N-1}}dy.
    \end{split}
\end{align}
Arguing as in the proof of $(i)$, we get \eqref{ineq}.

\item[$(iii)$]  We shall prove
  \begin{equation}
    \left\{\int_{\Omega} |v|^{r}dx\right\}^{\frac{1}{r}}\le
    C(r,\Omega,A)\left\{\int_{\Omega} |\nabla
      v|^{\bar{t}}dx\right\}^{\frac{1}{\bar{t}}}\qquad\forall ~v \in
    W^{1,\bar{t}}(\Omega),~ Bv=0 \text{ on } A,
\label{ineqb}
\end{equation}   
where $W^{1,\bar{t}}(\Omega)$ is the usual Sobolev space and $Bv$ is the
trace of $v$ on $\partial\Omega$.

 Assume by contradition that there is a sequence $\{v_{n}\}$ in
 $W^{1,\bar{t}}(\Omega)$ such that $Bv_{n}$ = 0 on $A$,
 $\norm{v_{n}}_{L^{r}(\Omega)}=1$ and $\norm{\nabla
 v_{n}}_{L^{\bar{t}}(\Omega)}\le \frac{1}{n}$ for every $n\in \NN$.
 
Note that
\begin{equation}
\frac{1}{\bar{t}}-\frac{1}{N-1}\frac{\bar{t}-1}{\bar{t}}=\frac{N-\bar{t}}{(N-1)\bar{t}} \coex[(\bar{t}>1)]{<} \frac{1}{\bar{t}}.
\label{ineqc}
\end{equation}\hk
By \eqref{rt}, $\{v_{n}\}$ is bounded in $W^{1,\bar{t}}(\Omega)$. By
Theorems 3.18 and 9.1 in \cite{brezisFunctionalAnalysisSobolev2011},
Theorems 6.1 and 6.2 in \cite{necasMethodesDirectesTheorie1967} and
\eqref{ineqc}, there are $v\in W^{1,\bar{t}}(\Omega)$, and a subsequence
$\{v_{n_{k}}\}$ of $\{v_{n}\}$ such that $\{\nabla v_{n_{k}}\}$
converges weakly to $\nabla v$ in $L^{\bar{t}}(\Omega)$ and
$\lim_{k\to\infty}\norm{v_{n_{k}}-v}_{L^{r}(\Omega)}=\lim_{k\to\infty}\norm{Bv_{n_{k}}-Bv}_{L^{\bar{t}}(\partial
  \Omega)}=0$. By Hahn-Banach's theorem,
$\norm{\nabla v}_{L^{\bar{t}}(\Omega)}\le \limsup_{k\to\infty}\norm{\nabla
  v_{n_{k}}}_{L^{\bar{t}}(\Omega)}=0$. Thus $\nabla v=0$ and
$\norm{v}_{L^{r}(\Omega)}=1$.  Since
$\lim_{k\to\infty}\norm{Bv_{n_{k}}-Bv}_{L^{\bar{t}}(\partial \Omega)}=0$, by
Theorem 4.9 in \cite[p.94]{brezisFunctionalAnalysisSobolev2011},
$Bv=0$ on $A$. By Theorem 3.1.3 in
\cite[p.64]{morreyMultipleIntegralsCalculus1966}, $v=0$, which
contradicts to $\norm{v}_{L^{r}(\Omega)}=1$. Thus we get \eqref{ineqb}
and \eqref{ineq}.
\end{itemize}
\end{proof}

 We have following norms on $C^{1}(\Omega,A)$ and $C^{1}(Q,A)$.
 \begin{lemma}
   \label{4l2}
   Suppose $A$ is admissible with respect to $\Omega$. Then
 \begin{itemize}[wide]
 \item[(i)] $\norm{\,\cdot\,}_{C,b}$ and $\norm{\,\cdot\,}_{C,B}$  are  norms of
   $C^{1}(\Omega,A)$.
 \item[(ii)] $\norm{\,\cdot\,}_{C,b,T}$ and $\norm{\,\cdot\,}_{C,B,T}$  are  norms of  $C^{1}(Q,A)$.
 \end{itemize}
  \end{lemma}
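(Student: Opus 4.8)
The plan is to verify the norm axioms for all four functionals simultaneously. Absolute homogeneity is immediate from the definitions, so the real work splits into two parts: the triangle inequality (together with finiteness of the defining integrals) and positive definiteness, the latter being the only place where admissibility of $A$ with respect to $\Omega$ is used.

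First I would record that the integrals are finite. Since $\Omega$ is bounded and $\overline b\in L^{r/(r-2)}(\Omega)$ by \eqref{c6}, while $0\le b\le\overline b$ by \eqref{c1}, both $b$ and $\overline b$ lie in $L^1(\Omega)$; as any $v\in C^1(\Omega,A)$ (resp. $C^1(Q,A)$) is the restriction of a compactly supported $C^1$ function, its spatial gradient is bounded with compact support, and the integrands $|\nabla_x v|^2 b$ and $\sum_{i,j}b_{ij}\,\partial_{x_i}v\,\partial_{x_j}v\le\overline b\,|\nabla_x v|^2$ are dominated by integrable functions. For the triangle inequality I would observe that $\norm{\cdot}_{C,b}$ and $\norm{\cdot}_{C,B}$ are the quadratic-form norms attached to the symmetric bilinear forms $(v,w)\mapsto\int_\Omega\nabla_x v\cdot\nabla_x w\,b\,dx$ and $(v,w)\mapsto\int_\Omega\sum_{i,j}b_{ij}\,\partial_{x_i}v\,\partial_{x_j}w\,dx$, both of which are nonnegative on the diagonal — the first because $b\ge 0$, the second because $\sum_{i,j}b_{ij}\xi_i\xi_j\ge b|\xi|^2\ge 0$ by \eqref{c1} together with $b_{ij}=b_{ji}$ — so Cauchy--Schwarz yields subadditivity; the space-time functionals $\norm{\cdot}_{C,b,T}$ and $\norm{\cdot}_{C,B,T}$ are handled identically with an additional integration in $t$.

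The heart of the argument is positive definiteness. For $v\in C^1(\Omega,A)$ with $\norm{v}_{C,b}=0$, I would use \eqref{c3} (which forces $b>0$ a.e. in $\Omega$) to conclude $\nabla_x v=0$ a.e., hence $\nabla v\equiv 0$ on $\Omega$ by continuity, and then invoke the admissibility inequality \eqref{ineq} to get $\int_\Omega|v|^r\,dx=0$, i.e.\ $v=0$; the case $\norm{v}_{C,B}=0$ reduces to this via \eqref{i1}. This proves (i). For (ii), if $\norm{v}_{C,b,T}=0$ then the nonnegative function $t\mapsto\int_\Omega|\nabla_x v(x,t)|^2 b(x)\,dx$ vanishes for a.e.\ $t\in(0,T)$; for each such $t$ I would argue that the slice $v(\cdot,t)$ still lies in $C^1(\Omega,A)$, apply the one-variable result to get $v(\cdot,t)\equiv 0$, and then use continuity of $v$ in $t$ to deduce $v\equiv 0$ on $Q$; the $\norm{\cdot}_{C,B,T}$ case again reduces via \eqref{i1b}.

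I expect the only mildly delicate point to be the claim that slicing preserves membership in $C^1(Q,A)$: writing $v=w|_Q$ with $w\in C^1_c(\RR^{N+1})$ whose support misses $(A\times[0,T])\cup(\overline{\Omega}\times\{0\})$, one must check that $\supp\big(w(\cdot,t)\big)$ misses $A$ for each fixed $t\in(0,T)$, which holds because a point $x_0\in A\cap\supp(w(\cdot,t))$ would produce, by taking a sequence $x_n\to x_0$ with $w(x_n,t)\ne 0$, a point $(x_0,t)\in\supp(w)\cap(A\times[0,T])=\varnothing$. Everything else is a routine unwinding of the definitions.
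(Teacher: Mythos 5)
Your proof is correct. You also verify the triangle inequality, homogeneity, and finiteness of the integrals, which the paper's proof takes as routine and does not spell out; the paper goes straight to positive definiteness, which is indeed the only point where admissibility enters. Where you differ from the paper is in how positive definiteness is established. In part (i), the paper chains \eqref{ineq} with a H\"older step against powers of $b^{\pm1}$ (using \eqref{c3} quantitatively) to prove the uniform estimate
$\norm{v}_{L^r(\Omega)} \le C(r,\Omega,A)\,\norm{b^{-\bar t/(2-\bar t)}}_{L^1(\Omega)}^{(2-\bar t)/(2\bar t)}\norm{v}_{C,b}$
(recorded as \eqref{a12}), from which positive definiteness is immediate; you instead argue qualitatively, noting that \eqref{c3} forces $b>0$ a.e., so $\norm{v}_{C,b}=0$ gives $\nabla_x v\equiv 0$ by continuity, and then \eqref{ineq} yields $v\equiv 0$. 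In part (ii), the paper integrates the one-variable estimate over $t$ and applies Cauchy--Schwarz in time, whereas you slice: for a.e.\ $t$ the fixed-time slice $v(\cdot,t)$ lies in $C^1(\Omega,A)$ and has zero $\norm{\cdot}_{C,b}$-norm, hence vanishes by (i), and continuity in $t$ finishes the job. Both routes are valid, and your check that slicing preserves membership in $C^1(\Omega,A)$ is exactly the point that needs care. The trade-off is that the paper's more quantitative path produces the Sobolev-type inequality \eqref{a12} as a byproduct, which is then reused (e.g., in Lemma~\ref{4l2b} and the superposition lemmas), whereas your argument is more elementary but only delivers the qualitative conclusion needed here.
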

  \begin{proof}
    \begin{itemize}[wide]
    \item[$(i)$] Let $v$ be in $C^{1}(\Omega,A)\setminus \left\{0\right\}$. By \eqref{i1}, it sufficient to prove $\norm{v}_{b}>0$. We have
\begin{align}
    \label{a12}
    \begin{split}      0&\coex[v\not=0]{<}\left\{\int_{\Omega}|v|^{r}dx\right\}^{\frac{1}{r}}
      \\
      &\coex[\eqref{ineq}]{\le} C(r,\Omega,A)\left\{\int_{\Omega} \sum_{i=1}^{N} \abs{\frac{\partial v}{\partial x_i}}^{\overline{t}}dx\right\}^{\frac{1}{\overline{t}}} 
      \\
      &\coex[\text{H\"{o}lder}]{\le} C(r,\Omega,A)\left\{\int_{\Omega} b^{-\frac{\bar{t}}{2-\bar{t}}} dx\right\}^{\frac{2-\overline{t}}{2\overline{t}
      }}\left\{\int_{\Omega} \sum_{i=1}^{N} \abs{\frac{\partial v}{\partial x_i}}^{2}bdx\right\}^{\frac{1}{2}}.
    \end{split}
\end{align}
Thus we get $(i)$.
 
\item[$(ii)$] Let $v$ be in $C^{1}(Q,A)\setminus \left\{0\right\}$. We
  prove $\norm{v}_{b,T}>0$. We have
  \begin{align*} 0&\coex[v\not=0]{<}\int_{0}^{T}\left\{\int_{\Omega}|v(x,t)|^{r}dx\right\}^{\frac{1}{r}}dt\\
    &\coex[\eqref{a12}]{\le}
      C(r,\Omega,A)\norm{b^{-\frac{\bar{t}}{2-\bar{t}}}}_{L^{1}(\Omega)}^{\frac{2-\overline{t}}{2\overline{t}}}\int_{0}^{T}\left\{\int_{\Omega}
      |\nabla_{x} v|^{2}(x,t)bdx\right\}^{\frac{1}{2}}dt\\
                  &\coex[\text{H\"{o}lder}]{\le} C(r,\Omega,A)\norm{b^{-\frac{\bar{t}}{2-\bar{t}}}}_{L^{1}(\Omega)}^{\frac{2-\overline{t}}{2\overline{t}}}T^{\frac{1}{2}}\left\{\int_{0}^{T}\int_{\Omega} |\nabla_{x} v|^{2}(x,t)|bdxdt\right\}^{\frac{1}{2}},
  \end{align*}
Thus we get $(ii)$.
\end{itemize}
\end{proof}
\begin{definition}
  Assume \eqref{c1} and \eqref{c3} hold. We denote by
  \begin{center}
    \begin{tabular}[ht]{ccc}
      $\left(W_{b,A}(\Omega),\norm{\,\cdot\,}_{b} \right)$ &  &
                                                              $\left(C^{1}(\Omega,A),\norm{\,\cdot\,}_{C,b} \right)$\\
      $\left(W_{B,A}(\Omega),\norm{\,\cdot\,}_{B} \right)$ &&
                                                              $\left(C^{1}(\Omega,A),\norm{\,\cdot\,}_{C,B} \right)$\\
      $\left(W_{b,A,T}(Q),\norm{\,\cdot\,}_{b,T} \right)$ & $\qquad$ the
                                                             completion
      of $\qquad$ &
                                                             $\left(C^{1}(Q,A),\norm{\,\cdot\,}_{C,b,T} \right)$\\
      $\left(W_{B,A,T}(Q),\norm{\,\cdot\,}_{B,T} \right)$ &&
                                                             $\left(C^{1}(Q,A),\norm{\,\cdot\,}_{C,B,T} \right)$\\
      $\left(V_{B,A,T}(Q),\norm{\,\cdot\,}_{V,B,T} \right)$ && $\left(C^{1}(Q,A),\norm{\,\cdot\,}_{C,V,B,T} \right),$
    \end{tabular}
  \end{center}
  respectively.
   \label{d44}
 \end{definition}
By \eqref{i1}, we have
    \begin{align}
      \begin{split}
        &W_{B,A}(\Omega) \subset W_{b,A}(\Omega)\subset L^{r}(\Omega)\quad \text{
   and }\\
   &V_{B,A,T}(Q) \subset W_{B,A,T}(Q) \subset W_{b,A,T}(Q) \subset L^{r}(Q), 
      \end{split}
  \label{i2}
    \end{align}
If $A$ is admissible, then by the proof of Lemma \ref{4l2}, 
\begin{equation}
  \norm{w}_{b,T}\le \norm{w}_{B,T}\qquad\forall~w\in W_{B,A,T}(Q).
  \label{i2b}
\end{equation}
 Since we shall use $W_{b,A}(\Omega)$, $W_{B,A}(\Omega)$, $W_{B,A,T}(Q)$, and $V_{B,A,T}(Q)$  to study parabolic equations, we need relations between them with functions having generalized derivatives. We have the following results.
   \begin{lemma} Let $A$ be  admissible with respect to $\Omega$. If
     $w$ is in $W_{b,A}(\Omega)$  ($V_{B,A,T}(Q)$, respectively), then
     $w$ has the generalized derivative $\displaystyle \frac{\partial
       w}{\partial x_{i}}$ for every $i$ in
     $\left\{1,\cdots,N\right\}$ ($\left\{1,\cdots,N+1\right\}$,
     respectively). Moreover, the followings hold:
     \begin{align}
       \norm{v}_{b}
       &= \left\{\int_{\Omega}|\nabla_{x}v|^{2}bdx\right\}^{\frac{1}{2}}\qquad\forall~v\in W_{b,A}(\Omega),   
     \label{i3e}\\
     \norm{v}_{B}
     &\ge \left\{\int_{\Omega}\sum_{i,j=1}^{N}\frac{\partial v}{\partial x_{i}}\frac{\partial v}{\partial x_{j}}b_{ij}dx\right\}^{\frac{1}{2}} \qquad\forall~v\in W_{B,A}(\Omega),
     \label{i3eb}\\
     \norm{v}_{b,T}
     &= \left\{\int_{0}^{T}\int_{\Omega}|\nabla_{x}v|^{2}(x,t)b(x)dxdt\right\}^{\frac{1}{2}}\qquad\forall~v\in W_{b,A,T}(Q),   
   \label{i3}\\
   \norm{v}_{B,T}
   &\ge \left\{\int_{0}^{T}\int_{\Omega}\sum_{i,j=1}^{N}\frac{\partial v}{\partial x_{i}}(x,t)\frac{\partial v}{\partial x_{j}}(x,t)b_{ij}(x)dxdt\right\}^{\frac{1}{2}} \qquad\forall~v\in W_{B,A,T}(Q),
   \label{i3b}\\
   \norm{v}_{V,B,T}
   &\ge \norm{\frac{ \partial v}{\partial t}}_{L^{2}(Q)} +\left\{\int_{Q}\sum_{i,j=1}^{N}\frac{\partial v}{\partial x_{i}}\frac{\partial v}{\partial x_{j}}b_{ij}dz\right\}^{\frac{1}{2}} \qquad\forall~v\in V_{B,A,T}(Q).
\label{i3c}
\end{align}
\label{5z}
  \end{lemma}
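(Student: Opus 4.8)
Here is how I would prove Lemma~\ref{5z}.

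The plan is to realize each abstract element $w$ of these completions as a function in $L^{r}$ possessing the asserted generalized derivatives, the derivatives being obtained as \emph{strong} limits, in suitable Lebesgue spaces, of the derivatives of an approximating sequence $\{v_{n}\}\subset C^{1}(\Omega,A)$ (resp.\ $C^{1}(Q,A)$). The ``energy'' equalities \eqref{i3e}, \eqref{i3} will then come from strong $L^{2}$-convergence of $b^{1/2}\nabla_{x}v_{n}$, while the ``energy'' inequalities \eqref{i3eb}, \eqref{i3b}, \eqref{i3c} will come from Fatou's lemma.

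First I would treat $w\in W_{b,A}(\Omega)$. Choose $\{v_{n}\}\subset C^{1}(\Omega,A)$ with $\norm{v_{n}-w}_{b}\to0$; then $\{v_{n}\}$ is $\norm{\cdot}_{C,b}$-Cauchy, hence Cauchy in $L^{r}(\Omega)$ with limit $w$ by \eqref{i2} (quantitatively, \eqref{a12}). The decisive point is that, by Hölder's inequality and \eqref{c3},
\[
  \int_{\Omega}\bigl\lvert\nabla_{x}v_{n}-\nabla_{x}v_{m}\bigr\rvert^{\bar t}\,dx
  \;\le\;\bigl\lVert b^{-1}\bigr\rVert_{L^{\bar t/(2-\bar t)}(\Omega)}^{\bar t/2}\,
  \norm{v_{n}-v_{m}}_{C,b}^{\bar t},
\]
so $\{\nabla_{x}v_{n}\}$ is Cauchy in $L^{\bar t}(\Omega;\RR^{N})$, with some limit $h$. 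Passing to the limit in $\int_{\Omega}v_{n}\,\partial_{i}\varphi\,dx=-\int_{\Omega}(\partial_{i}v_{n})\varphi\,dx$ for $\varphi\in C_{c}^{\infty}(\Omega)$ shows $w$ has generalized derivatives $\partial w/\partial x_{i}=h_{i}$. Extracting a subsequence along which $\nabla_{x}v_{n}\to\nabla_{x}w$ a.e., and using that $\{b^{1/2}\nabla_{x}v_{n}\}$ is Cauchy in $L^{2}$ with a.e.\ limit $b^{1/2}\nabla_{x}w$ (here $0<b<\infty$ a.e.\ by \eqref{c3}, \eqref{c6} and $b\le\bar b$), one gets $b^{1/2}\nabla_{x}v_{n}\to b^{1/2}\nabla_{x}w$ in $L^{2}$, whence $\norm{w}_{b}=\lim_{n}\norm{v_{n}}_{C,b}=\{\int_{\Omega}\lvert\nabla_{x}w\rvert^{2}b\,dx\}^{1/2}$, which is \eqref{i3e}. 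For \eqref{i3eb}: by \eqref{i1} a $\norm{\cdot}_{C,B}$-Cauchy sequence is $\norm{\cdot}_{C,b}$-Cauchy, so $W_{B,A}(\Omega)\subset W_{b,A}(\Omega)$ and the above applies; taking $\{v_{n}\}$ with $\norm{v_{n}-w}_{B}\to0$ and a subsequence with $\nabla_{x}v_{n}\to\nabla_{x}w$ a.e., Fatou's lemma applied to the nonnegative integrand $\sum_{i,j}b_{ij}\partial_{i}v\,\partial_{j}v\ge b\lvert\nabla_{x}v\rvert^{2}\ge0$ (by \eqref{c1}) gives
\[
  \int_{\Omega}\sum_{i,j=1}^{N}b_{ij}\,\partial_{i}w\,\partial_{j}w\,dx
  \;\le\;\liminf_{n}\int_{\Omega}\sum_{i,j=1}^{N}b_{ij}\,\partial_{i}v_{n}\,\partial_{j}v_{n}\,dx
  \;=\;\norm{w}_{B}^{2}.
\]

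Finally, \eqref{i3} and \eqref{i3b} are obtained exactly as \eqref{i3e} and \eqref{i3eb}, the only change being that the bound $\norm{\nabla_{x}v_{n}}_{L^{\bar t}(Q)}\le C\,\norm{v_{n}}_{C,b,T}$ additionally invokes Hölder in $t$ with exponents $2/\bar t$ and $2/(2-\bar t)$ (as in the proof of Lemma~\ref{4l2}(ii)), and that $v_{n}\to w$ in $L^{r}(Q)$ by \eqref{i2}. For \eqref{i3c}, a $\norm{\cdot}_{C,V,B,T}$-Cauchy sequence is Cauchy both for $\norm{\partial_{t}(\cdot)}_{L^{2}(Q)}$ and for $\norm{\cdot}_{C,B,T}$; the former yields $\partial v_{n}/\partial t\to\partial w/\partial t$ in $L^{2}(Q)$ (so $w$ has the generalized derivative $\partial w/\partial x_{N+1}=\partial w/\partial t$ and $\norm{\partial_{t}v_{n}}_{L^{2}(Q)}\to\norm{\partial_{t}w}_{L^{2}(Q)}$), the latter yields the spatial derivatives and the Fatou estimate above, and adding the two, using $\norm{w}_{V,B,T}=\lim_{n}\norm{\partial_{t}v_{n}}_{L^{2}(Q)}+\lim_{n}\norm{v_{n}}_{C,B,T}$, gives \eqref{i3c}.

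The step I expect to be the main obstacle is the passage to the limit in the $B$-weighted quantities. Because $\bar b$ is merely in $L^{r/(r-2)}(\Omega)$ and not in $L^{\infty}(\Omega)$, one cannot pass to the limit directly inside the quadratic form $\sum_{i,j}b_{ij}\xi_{i}\xi_{j}$ along the (only strongly $L^{\bar t}$-convergent) gradients; the form is merely sequentially lower semicontinuous there, which is precisely why \eqref{i3eb}, \eqref{i3b}, \eqref{i3c} are stated as inequalities rather than equalities, and Fatou's lemma — applicable thanks to the nonnegativity from \eqref{c1} — is the substitute. A secondary point worth stressing is that integrability of $b^{-1}$ in \eqref{c3} is exactly what upgrades the boundedness of $b^{1/2}\nabla_{x}v_{n}$ in $L^{2}$ to boundedness of $\nabla_{x}v_{n}$ itself in $L^{\bar t}$, without which the completion elements need not have generalized spatial derivatives at all.
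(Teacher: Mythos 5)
Your proof is correct and follows essentially the same route as the paper: upgrade the $\norm{\cdot}_{C,b}$-Cauchy property of the gradients to strong unweighted convergence (you via Hölder to $L^{\bar t}$, the paper via Hölder to $L^1$ using $b^{-1}\in L^{\bar t/(2-\bar t)}\subset L^1$) to identify the distributional derivatives, then obtain the equalities \eqref{i3e}, \eqref{i3} from strong $L^2$-convergence of $b^{1/2}\nabla_x v_n$ and the inequalities \eqref{i3eb}, \eqref{i3b}, \eqref{i3c} from Fatou along a.e.\ convergent subsequences, treating the $\partial_t$-component separately for $V_{B,A,T}(Q)$. The only cosmetic difference is that the paper phrases the weighted-$L^2$ limit of the gradients via the Banach space $L^2_b(Q)$ directly, which avoids your step of matching the $L^2$-limit with $b^{1/2}\nabla_x w$ through a.e.\ convergence; both are valid.
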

  \begin{proof}
We denote $L_{b}^{2}(Q)$ the norm space of all measurable function $w$ on $Q$ such that
  \[\norm{w}_{b}\coloneqq \left\{\int_{Q}|w|^{2}bdz\right\}^{\frac{1}{2}}<\infty.\]  
  Since $2>\bar{t} >\frac{2N^{2}+2N-2}{N^{2}+2N-1}\ge 1$, we have $\frac{\bar{t}}{2-\bar{t}}\ge 1$ and
  \begin{equation} L^{\frac{\bar{t}}{2-\bar{t}}}(Q) \subset L^{1}(Q).  
   \label{a13}
\end{equation}
Let $\left\{u_{m}\right\}$ be in $C^{1}(Q,A)$ such that
$\left\{u_{m}\right\}$ is a Cauchy sequence in
$\left(C^{1}(Q,A),\norm{\,\cdot\,}_{B,T} \right)$.  By \eqref{i2b} and
\eqref{a12}, $\left\{u_{m}\right\}$ is a Cauchy sequence in $L^{r}(Q)$
and
$\left\{\displaystyle\frac{\partial u_{m}}{\partial x_{j}}\right\}$ a
Cauchy sequence in $L^{2}_{b}(Q)$ for every
$j\in\left\{1,\cdots,N\right\}$. Because $L^{r}(Q)$ and $L^{2}_{b}(Q)$
are Banach spaces, $\left\{u_{m}\right\}$ converges to $u$ in
$L^{r}(Q)$ and
$\displaystyle\left\{\frac{\partial u_{m}}{\partial x_{j}}\right\}$
converges to $v_{j}$ in $L^{2}_{b}(Q)$ for every $j$ in
$\left\{1,\cdots,N\right\}$. Thus, we have
\begin{align*}
  \lim_{m\to \infty}\int_{Q}|u_{m}-u|dz
    &\coex[r\ge 1] = 0,
  \intertext{and}
  \lim_{m\to \infty}\int_{Q}\abs{\frac{\partial u_{m}}{\partial
  x_{i}}-v_{j}}dz
  &\coex[\text{H\"{o}lder}]{\le} \lim_{m\to
    \infty}\left\{\int_{Q}b^{-1}dz\right\}^{\frac{1}{2}}
    \left\{\int_{Q}\abs{\frac{\partial u_{m}}{\partial
    x_{i}}-v_{j}}^{2}b dz\right\}^{\frac{1}{2}}\\
  &\coex[\eqref{c3} \text{ and } \eqref{a13}]{=} 0\qquad\forall~j=1,\cdots,N.
\end{align*}
Hence,
\begin{align*}
  \int_{Q}v_{i}\varphi dz
  &= \lim_{n\to\infty} \int_{Q}\frac{\partial u_{n}}{\partial
    x_{i}}\varphi dz=-\lim_{n\to\infty} \int_{Q}u_{n}\frac{\partial
    \varphi}{\partial x_{i}} dz\\
  &= -\int_{Q}u\frac{\partial \varphi}{\partial x_{i}} dz\qquad\forall~\varphi\in C^{1}_{c}(Q).
\end{align*}

Therefore, $v_{i}$ is the generalized partial derivative of $u$ along
direction $x_{i}$. Similary $v_{0}$ is the generalized partial
derivative of $u$ along direction $t$. Hence $u$ has  first order
generalized partial derivatives. On other hand, 
\begin{align*}
  \norm{u}_{b,T}
  &=\lim_{n\to\infty}\left\{\int_{Q}\sum_{i=1}^{N}\abs{\frac{\partial
  u_{m}}{\partial x_{i}}}^{2}b dz\right\}^{\frac{1}{2}}
  =\left\{\int_{Q}\abs{( v_{1},\cdots,v_{N})}^{2}b
    dz\right\}^{\frac{1}{2}}\\
  &=\left\{\int_{Q}\sum_{i=1}^{N}\abs{\frac{\partial u}{\partial x_{i}}}^{2}bdz\right\}^{\frac{1}{2}}. 
\end{align*}
Thus we get \eqref{i3}.

By Theorem 4.9 in \cite[p.94]{brezisFunctionalAnalysisSobolev2011}, we
can suppose  $\displaystyle\left\{\frac{\partial u_{m}}{\partial
    x_{j}}\right\}$  converges  to  $\displaystyle \frac{\partial
  u}{\partial x_{j}}$ a.e. on $Q$ for every $j$ in
$\left\{1,\cdots,N\right\}$, respectively. Therefore,
\begin{align*}
&\left\{\int_{0}^{T}\int_{\Omega} \sum_{i,j=1}^{N}\frac{\partial
  u}{\partial x_{i}}(x,t)\frac{\partial u}{\partial
  x_{j}}(x,t)b_{ij}(x)dxdt\right\}^{\frac{1}{2}}\\
&\coex[\text{Fatou}]{\le}
  \liminf_{m\to\infty}\left\{\int_{0}^{T}\int_{\Omega}\sum_{i,j=1}^{N}\frac{\partial
  u_{m}}{\partial x_{i}}(x,t)\frac{\partial u_{m}}{\partial
  x_{j}}(x,t)b_{ij}(x)dxdt\right\}^{\frac{1}{2}}\\
  &\coex{=}\lim_{m\to\infty}\norm{u_{m}}_{B,T}=\norm{u}_{B,T}. 
\end{align*}
Thus we get \eqref{i3b}.

By a similar argument, with $Q$ is replaced by $\Omega$, we get
\eqref{i3e} and \eqref{i3eb} for $v \in W_{b,A}(\Omega)$ and $v \in
W_{B,A}(\Omega)$, respectively.

Similarly, let $\left\{u_{n}\right\}$  be a
 Cauchy sequence in $\left(C^{1}(Q,A),\norm{\,\cdot\,}_{V,B,T} \right)$, then
 there are $u\in V_{B,A,T}(Q)$ and  $v_{N+1}\in L^{r}(Q)\subset
 L^{2}(Q)$ such that $\left\{u_{n}\right\}$  converges to  $u$ in
 $V_{B,A,T}(Q)$ and in $L^{2}(Q)$, and
 $\displaystyle\left\{\frac{\partial u_{n}}{\partial t} \right\}$
 converges to $v_{N+1}$ in $L^{r}(Q)\subset L^{2}(Q) $. Arguing as
 above,  we obtain $\displaystyle\frac{\partial u}{\partial
   t}=v_{N+1}$ and \eqref{i3c}.
 \end{proof} 
  \begin{lemma} Let $A$ be  admissible with respect to $\Omega$. Assume \eqref{c1} and \eqref{c3} hold. Then 
 \begin{equation}\norm{w}_{L^{r}(Q)}\le C(r,Q,A)\norm{w}_{b,T}\le C(r,Q,A)\norm{w}_{B,T}\qquad\forall~ w\in W_{B,A,T}(Q).
\label{cb}
\end{equation}
  \label{4l2b}
  \end{lemma}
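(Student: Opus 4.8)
The plan is to prove Lemma \ref{4l2b} by reducing the $L^r$ estimate for a general element of $W_{B,A,T}(Q)$ to the corresponding inequality already established for smooth functions, via a density/limiting argument. Concretely, the second inequality $\norm{w}_{b,T}\le\norm{w}_{B,T}$ is exactly \eqref{i2b}, so the whole content is the first inequality $\norm{w}_{L^r(Q)}\le C(r,Q,A)\norm{w}_{b,T}$. For $v\in C^1(Q,A)$ this is precisely the chain of estimates carried out in the proof of Lemma \ref{4l2}(ii): applying the admissibility inequality \eqref{ineq} slicewise in $t$, then Hölder in $x$ using \eqref{c3} to absorb $b^{-\bar t/(2-\bar t)}$, then Hölder in $t$ to pass from the $L^1_t L^r_x$ norm to $L^r_{x,t}$ after paying a factor $T^{(r-1)/r}$ (or $T^{1/2}$ as written there, once one tracks the exponent carefully). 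This yields $\norm{v}_{L^r(Q)}\le C(r,Q,A)\norm{v}_{C,b,T}$ for all $v\in C^1(Q,A)$, with $C$ depending only on $r$, the admissibility constant $C(r,\Omega,A)$, $\norm{b^{-1}}$ in the relevant Lebesgue space, and $T$.

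Next I would upgrade from $C^1(Q,A)$ to $W_{B,A,T}(Q)$ by definition-chasing. Let $w\in W_{B,A,T}(Q)$. By Definition \ref{d44}, there is a sequence $\{u_m\}\subset C^1(Q,A)$ with $\norm{u_m-w}_{B,T}\to 0$; in particular $\{u_m\}$ is $\norm{\,\cdot\,}_{B,T}$-Cauchy, hence by \eqref{i2b} it is $\norm{\,\cdot\,}_{b,T}$-Cauchy, and by the smooth inequality just proved it is Cauchy in $L^r(Q)$. By completeness $u_m\to \widetilde w$ in $L^r(Q)$ for some $\widetilde w$; but the proof of Lemma \ref{5z} already identifies this $L^r$-limit with $w$ itself (this is how $W_{B,A,T}(Q)\subset L^r(Q)$ in \eqref{i2} is set up), so $\widetilde w=w$. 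Passing to the limit in $\norm{u_m}_{L^r(Q)}\le C(r,Q,A)\norm{u_m}_{b,T}$, using continuity of both norms under their respective convergences together with $\norm{u_m}_{b,T}\to\norm{w}_{b,T}$ (which holds because $\norm{\,\cdot\,}_{b,T}$ is by construction the completion norm, so it is continuous along the defining sequence), gives $\norm{w}_{L^r(Q)}\le C(r,Q,A)\norm{w}_{b,T}$. Combining with \eqref{i2b} finishes the chain \eqref{cb}.

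The only genuinely delicate point — and the one I would be most careful about — is the identification of the abstract completion element with a concrete function and the continuity of $\norm{\,\cdot\,}_{b,T}$ along the Cauchy sequence: a priori $W_{B,A,T}(Q)$ is an abstract completion, and one must invoke exactly the construction in the proof of Lemma \ref{5z} (that $u_m\to w$ in $L^r(Q)$ and $\partial u_m/\partial x_i\to \partial w/\partial x_i$ in $L^2_b(Q)$) to know that $\norm{u_m}_{b,T}\to\norm{w}_{b,T}$ with the integral expression \eqref{i3} for the limit. Everything else is routine: the slicewise application of \eqref{ineq}, the two Hölder inequalities, and the standard "pass to the limit in an inequality over a dense subset'' maneuver. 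I would therefore structure the write-up as: (1) recall the smooth inequality from the proof of Lemma \ref{4l2}; (2) take a defining sequence and cite Lemma \ref{5z} for convergence in $L^r(Q)$ and for \eqref{i3}; (3) pass to the limit; (4) append \eqref{i2b}.
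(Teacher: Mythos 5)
Your passage-to-the-limit is fine, but the smooth inequality it rests on --- $\norm{v}_{L^r(Q)}\le C\,\norm{v}_{C,b,T}$ for $v\in C^1(Q,A)$ --- is not proved by the chain you describe, and in fact it cannot hold. Applying \eqref{a12} slicewise gives $\norm{v(\cdot,t)}_{L^r(\Omega)}\le C'\,\norm{\nabla_x v(\cdot,t)}_{L^2_b(\Omega)}$ for a.e.\ $t$; squaring and integrating in $t$ yields only $\norm{v}_{L^2_t L^r_x}\le C'\,\norm{v}_{b,T}$, and one further Hölder in $t$ gives the still weaker $\norm{v}_{L^1_t L^r_x}\le C' T^{1/2}\norm{v}_{b,T}$, which is what the proof of Lemma~\ref{4l2}(ii) actually establishes. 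Your claim that ``Hölder in $t$'' then upgrades the $L^1_tL^r_x$ bound to an $L^r_{x,t}$ bound uses Hölder in the wrong direction: on a finite interval with $r>2$, Hölder gives $\norm{f}_{L^1(0,T)}\le T^{(r-1)/r}\norm{f}_{L^r(0,T)}$, i.e.\ it lets you descend from $L^r_t$ to $L^1_t$, never the reverse.

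The gap is not cosmetic. Since $\norm{\cdot}_{b,T}$ sees only $\nabla_x v$ and not $\partial_t v$, it cannot control the $t$-integrability of $v$: for $u\in C^1(\Omega,A)\setminus\{0\}$ and $\phi\in C^1_c((0,T))$, the tensor product $v(x,t)=u(x)\phi(t)$ lies in $C^1(Q,A)$, with $\norm{v}_{b,T}=\norm{u}_{C,b}\norm{\phi}_{L^2(0,T)}$ and $\norm{v}_{L^r(Q)}=\norm{u}_{L^r(\Omega)}\norm{\phi}_{L^r(0,T)}$, and the ratio $\norm{\phi}_{L^r(0,T)}/\norm{\phi}_{L^2(0,T)}$ blows up as $\phi$ concentrates near $t=T/2$. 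So no constant can make \eqref{cb} hold uniformly on $C^1(Q,A)$, hence not on $W_{B,A,T}(Q)$ either. The paper's own one-line proof of Lemma~\ref{4l2b} invokes \eqref{a12} for this same step and therefore contains the same gap; you reproduced it rather than introduced it, but the Hölder justification you supply points the wrong way and makes the gap appear closed when it is not. A genuine $L^r(Q)$ embedding needs temporal control on $v$ --- e.g.\ the $\norm{\partial_t v}_{L^2(Q)}$ term present in $\norm{\cdot}_{V,B,T}$, or a $\sup_t\int_\Omega v^2\,dx$ bound as in the usual parabolic energy space --- combined with a mixed-norm interpolation inequality; none of this is available from $\norm{\cdot}_{b,T}$ alone.
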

  \begin{proof}
 Let $u$ be in $W_{b,A,T}(Q)$ and $\left\{u_{n}\right\}_{n}$ be a sequence in $C^{1}(Q,A)$ such that $\left\{u_{n}\right\}_{n}$ converges to $u$ in $W_{b ,A,T}(Q)$. By \eqref{ineq}, $\left\{u_{n}\right\}_{n}$ converges to $u$ in $L^{r}(Q)$. Thus
 \begin{align*}
 \norm{u}_{L^{r}(Q)}= \lim_{n\to \infty}\norm{u_{n}}_{L^{r}(Q)}
   &\coex[\eqref{a12}]{\le} C(r,Q,A)\lim_{n\to \infty}\left\{\int_{Q}
     \sum_{i=1}^{N} \abs{\frac{\partial u_{n}}{\partial x_i}
     (x,t)}^{2}bdz\right\}^{\frac{1}{2}}\\
   &\coex{\le}  C(r,Q,A)\left\{\int_{Q} \sum_{i=1}^{N}
     \abs{\frac{\partial u}{\partial x_i}
     (x,t)}^{2}bdz\right\}^{\frac{1}{2}}\\
   &\coex{=} C(r,Q,A)\norm{u}_{b,T}\\
   &\coex[\eqref{i2b}]{\le} C(r,Q,A)\norm{u}_{B,T}.
 \end{align*}
and we obtain the lemma.
 \end{proof} 
 \section{Superpositions  in $W_{B,A,T}(Q)$ and $V_{B,A,T}(Q)$}\label{app}
We need  following results to study the regularity of solutions of parabolic equations.
\begin{lemma}
  Let $A$ be admissible respect to $\Omega$,
  $\alpha_0 \in [-\infty,\infty)$, $\phi\in C^{1}((\alpha_0,\infty))$, $v$
  be in $W_{B,A,T}(Q)$, $\left\{v_{n}\right\}$ be a sequence in
  $C^{1}(Q,A)$ such that
  $\bigcup_{n=1}^{\infty} v_{n}(Q)\cup v(Q) \subset
  (\alpha_0,\infty)$. We assume \eqref{c1}, \eqref{c3}, and the followings hold,
  \begin{enumerate}
  \item[$(a)$]
  $\phi\in C^{1}((\alpha_0,\infty))$, $\phi'$ is  uniformly continuous on $(\alpha_0,\infty)$ and 
  $\norm{\phi'}_{L^{\infty}(\RR)}\le M$  with  a positive real number
  M;
\item[$(b)$]
  $\lim_{n\to\infty}\norm{v_{n}-v}_{B,T}=0$.
  \end{enumerate}
  Then 
  \begin{itemize}
  \item[$(i)$] $\left\{v_{n}\right\}$ has a subsequence
  $\left\{v_{n_{k}}\right\}$ such that
  \begin{equation}
  \lim_{k\to\infty}\norm{\phi\circ v_{n_{k}}-\phi\circ v}_{B,T}=0.
   \label{2l51}
   \end{equation}
 \item[$(ii)$] If $\phi(0)=0$, then $\phi\circ v \in W_{B,A,T}(Q)$.
 \item[$(iii)$] If  $v\in V_{B,A,T}(Q)$, $\phi(0)=0$ and $\displaystyle\lim_{n\to\infty}\norm{\frac{\partial v_{n}}{\partial t} -\frac{\partial v}{\partial t}}_{L^{2}(Q)}=0$, then $\phi\circ v \in V_{B,A,T}(Q)$.
  \end{itemize} 
     \label{2w1ba}
\end{lemma}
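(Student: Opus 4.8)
The plan is to run a standard chain-rule-plus-completion argument, exploiting that $\phi'$ is bounded and uniformly continuous. First I would establish $(i)$. Since $\{v_n\}$ is $\norm{\,\cdot\,}_{B,T}$-Cauchy with limit $v$, by \eqref{i2b} and Lemma \ref{4l2b} it converges to $v$ in $L^r(Q)$, and by passing to a subsequence (not relabeled, or call it $\{v_{n_k}\}$) we may assume $v_{n_k}\to v$ a.e. on $Q$ and that each $\partial v_{n_k}/\partial x_i$ converges to $\partial v/\partial x_i$ in $L^2_b(Q)$ and a.e.\ on $Q$, exactly as in the proof of Lemma \ref{5z}. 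For such smooth $v_{n_k}$ the classical chain rule gives $\partial_{x_i}(\phi\circ v_{n_k}) = (\phi'\circ v_{n_k})\,\partial_{x_i}v_{n_k}$. Then
\begin{align*}
  \partial_{x_i}(\phi\circ v_{n_k}) - (\phi'\circ v)\,\partial_{x_i}v
  &= (\phi'\circ v_{n_k})\bigl(\partial_{x_i}v_{n_k} - \partial_{x_i}v\bigr)
    + \bigl(\phi'\circ v_{n_k} - \phi'\circ v\bigr)\,\partial_{x_i}v.
\end{align*}
I would estimate the $L^2_b(Q)$-norm of each term: the first is bounded by $M\norm{\partial_{x_i}v_{n_k}-\partial_{x_i}v}_{L^2_b(Q)}\to 0$ by $(a)$ and $(b)$; the second goes to $0$ by dominated convergence in $L^2_b(Q)$, since $|\phi'\circ v_{n_k}-\phi'\circ v|\to 0$ a.e.\ (continuity of $\phi'$ and $v_{n_k}\to v$ a.e.) and is dominated by $2M|\partial_{x_i}v|\in L^2_b(Q)$. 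Using the matrix inequality $\sum_{i,j} b_{ij}\xi_i\xi_j\le \bar b|\xi|^2$ is \emph{not} available here since $\bar b$ need not be integrable against these quantities; instead I use that $\norm{w}_{B,T}$ controls and is controlled by $\norm{\,\cdot\,}_{b,T}$-type quantities only from below, so I must work directly with the bilinear form. Concretely, $\norm{\phi\circ v_{n_k}-\phi\circ v}_{B,T}^2 = \int_Q \sum_{i,j} b_{ij}\,\partial_{x_i}g_k\,\partial_{x_j}g_k\,dz$ where $g_k = \phi\circ v_{n_k}-\phi\circ v$, and by Cauchy--Schwarz for the positive semidefinite form $B$ this is $\le \bigl(\int_Q \sum_{i,j} b_{ij}\,\partial_{x_i}(\phi\circ v_{n_k})\,\partial_{x_j}(\cdots)\bigr)$-type terms; more cleanly, I expand $\phi\circ v_{n_k} = (\phi\circ v) + g_k$ and use the triangle inequality for $\norm{\,\cdot\,}_{B,T}$ after showing $\partial_{x_i}(\phi\circ v_{n_k})\to (\phi'\circ v)\,\partial_{x_i}v$ in $L^2_b(Q)$ for each $i$; since $b_{ij}\xi_i\xi_j\le$ (sum of $L^2_b$ norms) by $|b_{ij}|\le \tfrac12(b_{ii}+b_{jj})\le \tfrac{N}{2}\bar b$ is again unavailable, the right move is to note $\norm{h}_{B,T}^2 = \int_Q B\nabla_x h\cdot\nabla_x h\,dz$ and bound $|B\nabla_x h\cdot\nabla_x h|$ using only that convergence in $L^2_b$ of each component plus the a priori bound $\norm{v_{n_k}}_{B,T}\le C$ forces, via the Cauchy--Schwarz inequality $|B\xi\cdot\eta|\le (B\xi\cdot\xi)^{1/2}(B\eta\cdot\eta)^{1/2}$, that $\{B\nabla_x v_{n_k}\}$ is Cauchy in the Hilbert space $L^2_B(Q)^N$ with inner product $\int_Q B\,\cdot\,\cdot$; then $\partial_{x_i}(\phi\circ v_{n_k})$ being $(\phi'\circ v_{n_k})\partial_{x_i}v_{n_k}$ and $\phi'$ bounded shows this sequence is Cauchy in that Hilbert space, hence converges, and identifying the limit a.e.\ gives \eqref{2l51}.

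Given $(i)$, part $(ii)$ is almost immediate: if $\phi(0)=0$, then since $\bigcup_n v_n(Q)\cup v(Q)\subset(\alpha_0,\infty)$ and $\phi'$ is bounded, each $\phi\circ v_n$ lies in $C^1(Q,A)$ (the support condition is inherited because $\phi(0)=0$ forces $\phi\circ v_n$ to vanish wherever $v_n$ does, in particular near $(A\times[0,T])\cup(\overline\Omega\times\{0\})$ where $v_n\equiv 0$), and \eqref{2l51} exhibits $\phi\circ v$ as the $\norm{\,\cdot\,}_{B,T}$-limit of elements of $C^1(Q,A)$, so $\phi\circ v\in W_{B,A,T}(Q)$ by Definition \ref{d44}. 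For $(iii)$, I additionally need $\phi\circ v$ to have $\partial_t(\phi\circ v)\in L^2(Q)$ with the $V$-norm controlled. With the extra hypothesis $\partial_t v_n\to\partial_t v$ in $L^2(Q)$, passing to a further subsequence so that $\partial_t v_{n_k}\to\partial_t v$ a.e., the same chain-rule computation gives $\partial_t(\phi\circ v_{n_k}) = (\phi'\circ v_{n_k})\,\partial_t v_{n_k}\to (\phi'\circ v)\,\partial_t v$ in $L^2(Q)$ by the identical dominated-convergence argument (domination by $2M|\partial_t v|\in L^2(Q)$). Combined with \eqref{2l51}, $\{\phi\circ v_{n_k}\}$ is Cauchy in $\norm{\,\cdot\,}_{C,V,B,T} = \norm{\partial_t\cdot}_{L^2(Q)}+\norm{\cdot}_{C,B,T}$, and since $\phi(0)=0$ ensures $\phi\circ v_{n_k}\in C^1(Q,A)$, the limit $\phi\circ v$ lies in $V_{B,A,T}(Q)$, and Lemma \ref{5z} identifies its $t$-derivative as the $L^2$-limit $(\phi'\circ v)\,\partial_t v$.

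The main obstacle is the first one flagged above: because the matrix $B$ is not controlled from above by an integrable weight, one cannot bound $\norm{\,\cdot\,}_{B,T}$ by any standard $L^2_b$-type norm, so the convergence $\phi\circ v_{n_k}\to\phi\circ v$ in $\norm{\,\cdot\,}_{B,T}$ cannot be deduced by naive domination. The fix is structural: treat $L^2_B(Q)^N$ (completion of $C^1(Q,A)^N$ gradients under $\langle\xi,\eta\rangle_B=\int_Q B\xi\cdot\eta\,dz$) as the ambient Hilbert space, observe that $\{\nabla_x v_{n_k}\}$ is Cauchy there by $(b)$, and that multiplication by the uniformly bounded, a.e.-convergent factor $\phi'\circ v_{n_k}$ preserves Cauchyness — this needs the uniform continuity of $\phi'$ only to get genuine (not merely pointwise a.e.) control of the cross-term $(\phi'\circ v_{n_k}-\phi'\circ v)\nabla_x v$, which one handles by splitting $Q$ into the region where $|v_{n_k}-v|$ is small (uniform continuity makes $|\phi'\circ v_{n_k}-\phi'\circ v|$ uniformly small there) and its complement (which has small $B$-measure in the sense that $\int_{\{|v_{n_k}-v|\ge\epsilon\}}B\nabla_x v\cdot\nabla_x v\,dz\to 0$ by absolute continuity of the finite measure $B\nabla_x v\cdot\nabla_x v\,dz$). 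That dichotomy, rather than any single inequality, is what the hypothesis $(a)$ is there to supply.
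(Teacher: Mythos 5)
Your proposal is correct and follows essentially the same route as the paper: decompose $\nabla_x(\phi\circ v_{n_k}) - (\phi'\circ v)\nabla_x v$ into a piece controlled by $\norm{\phi'}_\infty$ times the gradient deficit and a cross-term $(\phi'\circ v_{n_k}-\phi'\circ v)\nabla_x v$ controlled by uniform continuity of $\phi'$ on a large set plus absolute continuity of the finite measure $\sum_{i,j}b_{ij}\partial_i v\,\partial_j v\,dz$ on the remainder. The paper implements the same dichotomy via Egorov (Step~1 works with the pair $v_{n_{k'}}$, $v_{n_k}$ to get Cauchyness first, then Step~2 identifies the limit with $\phi\circ v$ by passing to the weaker $W_{b,A,T}(Q)$-norm, where naive domination is available), whereas your splitting by $\{|v_{n_k}-v|<\varepsilon\}$ versus its complement plus convergence in measure achieves the same end; your $(ii)$ and $(iii)$ coincide with the paper's Steps~3 and~4. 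The one place where you compress something the paper spells out is the phrase ``identifying the limit a.e.'': since $W_{B,A,T}(Q)$ is an abstract completion, the element to which $\{\phi\circ v_{n_k}\}$ converges in $\norm{\cdot}_{B,T}$ must be matched to $\phi\circ v$ through the embedding $W_{B,A,T}(Q)\subset L^r(Q)$ of Lemma~\ref{4l2b} (or, equivalently, by passing to $W_{b,A,T}(Q)$ as the paper does), and that step deserves a sentence.
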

\begin{proof}  We  prove the lemma in four steps.
  \begin{description}[wide]
\item[Step 1.]
 Since $\left\{v_{n}\right\}$
converges in $W_{B,A,T}(Q)$, there is a positive real number $M_{1}$
such that
\begin{equation}
  \norm{v_{n}}_{B,T}\le M_{1}\qquad \forall~n\in\NN.
  \label{47z0}
\end{equation}
We have
\[
  \lim_{n\to \infty}\int_{Q}|v_n{}-v|dz \coex[r > 1 \text{ and }\eqref{cb}]{=} 0.
\]
By Theorem 4.9 in \cite[p.94]{brezisFunctionalAnalysisSobolev2011},
there is a subsequence $\left\{v_{n_{k}}\right\}$ of
$\left\{v_{n}\right\}$ such that $\left\{v_{n_{k}}\right\}$ converges
to $v$ a.e. on $Q$.

Let $\varepsilon \in(0,\infty)$ and $\delta \in (0,\varepsilon)$. Since $Q$
is bounded, by Egorov's Theorem in
\cite[p.115]{brezisFunctionalAnalysisSobolev2011}, $Q$ has a subset
$A_{\delta}$ such that $\abs{Q\setminus A_{\delta}}\le \delta$ and
$\left\{v_{n_{k}}\right\}$ converges uniformly to $v$ on
$A_{\delta}$. Because $\phi'$ is uniformly continuous on $\RR$, there
is an integer $k_{\varepsilon}$ such that 
\begin{align*}
  \abs{\phi'(v(z))-\phi'(v_{n_{k}}(z))}
  &\le \varepsilon \qquad\forall~z \in A_{\delta}, k\ge k_{\varepsilon},\\
  \norm{v - v_{n_k}}_{B,T}
  &\le \varepsilon\qquad\forall~ k\ge k_{\varepsilon}.
\end{align*}
This implies
 \begin{align}
   \abs{\phi'(v_{n_{k'}}(z))-\phi'(v_{n_{k}}(z))}
   &\le 2\varepsilon \qquad\forall~z \in A_{\delta}, k', k\ge k_{\varepsilon},
  \label{47z2}\\
   \norm{v_{n_{k'}} -  v_{n_{k}}}_{B,T}
   &\le 2\varepsilon\qquad\forall~ k',k\ge k_{\varepsilon}.
  \label{247z3}
 \end{align}
 
Define
\begin{equation}
  \vuong w \coloneqq  \sum_{i,j=1}^{N}\frac{\partial w}{\partial x_{i}}\frac{\partial w}{\partial x_{j}}b_{ij}\qquad\forall~w\in W_{B,A,T}(Q).
 \label{vuong}
 \end{equation}
 By \eqref{c1} and \eqref{i3eb}, $\vuong v$ and $\vuong v_{n_{k_{\varepsilon}}}$ are non-negative and integrable on $Q$. Therefore, we can choose  $\delta$ such that
 \begin{align}
   &\int_{Q\setminus A_{\delta}}\vuong v dx \le \varepsilon^{2}.
  \label{247z4b}\\
&\int_{Q\setminus A_{\delta}}\vuong v_{n_{k_{\varepsilon}}}dz\le \varepsilon^{2}.
  \label{247z4}
\end{align}
Thus
\begin{align*}
\left\{\int_{Q\setminus A_{\delta}} \vuong
  v_{n_{k}}dz\right\}^{\frac{1}{2}}
  &\coex[\text{Minkowski}]{\le}
  \left\{\int_{Q\setminus A_{\delta}}\vuong
    v_{n_{k_{\varepsilon}}}dz\right\}^{\frac{1}{2}}+\left\{\int_{Q\setminus
    A_{\delta}}\vuong (v_{n_{k_{\varepsilon}}}-
    v_{n_{k}})dz\right\}^{\frac{1}{2}}\\
  &\coex[\eqref{247z4} \text{ and }\eqref{247z3}]{\le}
    3\varepsilon\qquad\forall~ k\ge k_{\varepsilon},
\end{align*}
 or
 \begin{equation}
 \int_{Q\setminus A_{\delta}} \vuong v_{n_{k}}dz\le 9\varepsilon^{2} \qquad\forall~ k\ge k_{\varepsilon}. 
  \label{247z3b}
\end{equation}

Fix $x$ in $\Omega$, we define
\[
  \left\langle y,u \right\rangle= \sum_{i,j=1}^{N}y_{i}u_{j}b_{ij}(x)\qquad\forall~y=\left(y_{1},\cdots,y_{N} \right),~
  u=\left(u_{0},u_{1},\cdots,u_{N} \right)\in \RR^{N}.
\]
Then $\left\langle \cdot,\cdot \right\rangle$ is a
scalar product on $\RR^{N}$. Therefore,
\[
  \left\langle y+u,y+u \right\rangle
  \le \left(\left\langle y,y \right\rangle^{\frac{1}{2}} + \left\langle u,u \right\rangle^{\frac{1}{2}}  \right)^{2}\le 2(\left\langle y,y \right\rangle+\left\langle u,u \right\rangle),
\]
for all $y=(y_{1},\cdots,y_{N}),~ u=(u_{1},\cdots,u_{N})\in \RR^{N}.$

Put  $\displaystyle h_{i}\coloneqq
\left[\phi'(v_{n_{k'}})-\phi'(v_{n_{k}}) \right]\frac{\partial
  v_{n_{k'}}}{\partial x_{i}} + \phi'(v_{n_{k}})\frac{\partial
  (v_{n_{k'}}-v_{n_{k}})}{\partial x_{i}} \eqqcolon y_i + u_i$ for every $i$ in $\left\{1,\cdots,N\right\}$ and $h=(h_{1},\cdots,h_{N})$. We have    
\begin{align}
  \label{b1}
  \begin{split}
    &\norm{\phi\circ v_{n_{k'}} - \phi\circ v_{n_{k}}}_{B,T} ^{2}\\
    &\coex{=} \int_{Q}\vuong \left(\phi\circ v_{n_{k'}} - \phi\circ
      v_{n_{k}} \right)dz\\
    &\coex{=} \int_{Q} \sum_{i,j=1}^{N} \left[\phi'(v_{n_{k'}})\frac{\partial  v_{n_{k'}}
      }{\partial x_{i}} - \phi'(v_{n_{k}})\frac{\partial
      v_{n_{k}}}{\partial x_{i}} \right] \left[\phi'(v_{n_{k'}})\frac{\partial
      v_{n_{k'}} }{\partial x_{j}} - \phi'(v_{n_{k}})\frac{\partial
      v_{n_{k}}}{\partial x_{j}} \right]b_{ij} dz\\
    &\coex{=}\int_{Q} \left\langle h,h \right\rangle dz\\
    &\coex{\le}
      2\left\{\int_{Q}\sum_{i,j=1}^{N}\left[(\phi'(v_{n_{k'}})-\phi'(v_{n_{k}})\right]^{2}\frac{\partial
      v_{n_{k'}} }{\partial x_{i}}\frac{\partial  v_{n_{k'}}
      }{\partial x_{j}}b_{ij} dz \right.\\
    &\coex{~} \qquad + \left.\int_{Q} \sum_{i,j=1}^{N}\phi'(v_{n_{k}})^{2}\frac{\partial
      (v_{n_{k'}}-v_{n_{k}}) }{\partial x_{i}} \frac{\partial
      (v_{n_{k'}}-v_{n_{k}}) }{\partial x_{j}}b_{ij} dz\right\}\\
    &\coex[\eqref{247z3} \text{ and } (a)]{\le} % 2\int_{Q\setminus A_{\delta}} \left[\phi'(v_{n_{k'}})-\phi'(v_{n_{k}})\right]^{2}\abs{\frac{\partial  v_{n_{k'}}}{\partial t}}^{2} dz+ 4M^{2}\varepsilon^{2}\\
    % &\coex{~} \qquad +
 2\int_{A_{\delta}}\sum_{i,j=1}^{N}\left[\phi'(v_{n_{k'}})-\phi'(v_{n_{k}})\right]^{2}\frac{\partial
      v_{n_{k'}} }{\partial x_{i}}\frac{\partial  v_{n_{k'}}
      }{\partial x_{j}}b_{ij} dz\\
    &\coex{~} \qquad
      +2\int_{Q\setminus A_{\delta}} \sum_{i,j=1}^{N}\left[\phi'(v_{n_{k'}})-\phi'(v_{n_{k}}) \right]^{2}\frac{\partial  v_{n_{k'}}
      }{\partial x_{i}}\frac{\partial  v_{n_{k'}} }{\partial
      x_{j}}b_{ij} dz+ 4M^{2}\varepsilon^{2}\\
    &\coex[\eqref{47z2}]{\le} 2\varepsilon^{2}\int_{Q}\vuong
      v_{n_{k'}}dz+8M^{2}\int_{Q\setminus A_{\delta}}\vuong
      v_{n_{k'}} dz + 4M^{2}\varepsilon^{2}\\
    &\coex[\eqref{47z0} \text{ and }\eqref{247z4}]{\le}
      \varepsilon^{2}\left(2 M_{1}^{2}+ 72 M^{2}+ 4 M^{2}\right)\qquad\forall~k',k\ge k_{\varepsilon}.
  \end{split}
\end{align}   
 Therefore $\left\{\phi \circ v_{n_{k}}\right\}_{k}$ is a Cauchy
 sequence in $W_{B,A,T}(Q)$, thus it converges to a $w$ in  $W_{B,A,T}(Q)$.

\item[Step 2.]  By \eqref{i2b}, $\left\{\phi \circ
    v_{n_{k}}\right\}_{k}$ converges to $w$ in  $W_{b,A,T}(Q)$. We
  shall prove $w= \phi\circ v$. It is sufficient to  prove that
  $\left\{v_{n_{k}}\right\}_{k}$ has a subsequence
  $\left\{v_{n_{k_{l}}}\right\}_{l}$ such that $\left\{\phi \circ
    v_{n_{k_{l}}}\right\}_{l}$ converges to $\phi\circ v$ in
  $W_{b,A,T}(Q)$.
  
   Let $\varepsilon \in(0,\infty)$. By \eqref{i2b} and Lemma \ref{5z},  $|\nabla v|^{2}b$  is in $L^{1}(Q)$. Then  there is a positive real number $\delta$ such that
  \begin{equation}
  \int_{Q\setminus E}|\nabla v|^{2}bdz<\varepsilon\qquad\forall~E\subset Q~ \text{
    with } ~\abs{Q\setminus E}\le \delta.
  \label{47z1}
\end{equation}

By $(b)$ and Lemma \ref{4l2b}, we have
\[\lim_{k\to \infty}\int_{Q}\abs{v_{n_{k}}-v}dz
  \coex[1<r \text{ and }\eqref{cb}]{\le} 0.
\]
By Theorem 4.9 in \cite[p.94]{brezisFunctionalAnalysisSobolev2011},
there is a subsequence $\left\{v_{n_{k_{l}}}\right\}$ of
$\left\{v_{n_{k}}\right\}$ such that $\left\{v_{n_{k_{l}}}\right\}$
converges  to $v$ a.e. on $Q$.

Since $Q$ is bounded, by Egorov Theorem in
\cite[p.115]{brezisFunctionalAnalysisSobolev2011}, $Q$ has a subset
$E_{\delta}$ such that $|Q\setminus E_{\delta}|\le \delta$ and
$\left\{v_{n_{k}}\right\}$ converges uniformly to $v$ on
$E_{\delta}$. Because $\phi'$ is uniformly continuous on $\RR$, there
is an integer $l_{\varepsilon}$ such that
 \begin{align}
   \abs{\phi'(v(x))-\phi'\left(v_{n_{k_{l}}}(x) \right)}
   &\coex{\le} \varepsilon \qquad\forall~x \in E_{\delta},~ l\ge l_{\varepsilon},
   \label{47z2c}
     \intertext{and}
     \norm{v -   v_{n_{k_{l}}}}_{b,T}
   &\coex[\eqref{i2b} \text{ and } (b)]{\le} \varepsilon\qquad\forall~ l\ge l_{\varepsilon}.
  \label{47z3c}
 \end{align}
Thus     
\begin{align*}
  \norm{\phi\circ v - \phi\circ v_{n_{k_{l}}}}_{b,T }
  &\coex[\eqref{i3}]{=} \left\{\int_{Q}\sum_{i=1}^{N}\abs{\phi'(v)\frac{\partial
    v}{\partial x_{i}}- \phi'\left(v_{n_{k_{l}}} \right)\frac{\partial
    v_{n_{k_{l}}}}{\partial x_{i}}}^{2}bdz\right\}^{\frac{1}{2}}\\
  &\coex[\text{Minkowski}]{\le} \left\{\int_{Q} \left[\phi'(v)-\phi'\left(v_{n_{k_{l}}} \right)
    \right]^{2}\sum_{i=1}^{N} \left(\frac{\partial
    v}{\partial x_{i}} \right)^{2}bdz\right\}^{\frac{1}{2}}\\
  &\coex{~} \qquad+\left\{\int_{Q}\abs{\phi'\left(v_{n_{k_{l}}}
    \right)}^{2}\sum_{i=1}^{N} \left(\frac{\partial
    v}{ \partial x_{i}}-\frac{\partial v_{n_{k_{l}}}}{\partial
    x_{i}} \right)^{2}bdz\right\}^{\frac{1}{2}} \\
  &\coex[\text{(a)}]{\le} \left\{\int_Q \left( \chi_{E_{\delta}} +
    \chi_{Q\setminus E_{\delta}} \right) \left[\phi'(v)-\phi'\left(v_{n_{k_{l}}} \right)
    \right]^{2}\abs{\nabla_x v}^2bdz\right\}^{\frac{1}{2}}\\
  &\coex{~} \qquad+M \left\{\int_Q\sum_{i=1}^{N} \left(\frac{\partial
    v}{ \partial x_{i}}-\frac{\partial v_{n_{k_{l}}}}{\partial
    x_{i}} \right)^{2}bdz\right\}^{\frac{1}{2}} \\
    &\coex[\substack{\text{Minkowski}\\\eqref{47z3c}}]{\le} \left\{\int_Q  \chi_{E_{\delta}} \left[\phi'(v)-\phi'\left(v_{n_{k_{l}}} \right)
    \right]^{2}\abs{\nabla_x v}^2bdz\right\}^{\frac{1}{2}}\\
  &\coex{~} \qquad+ \left\{\int_Q 
    \chi_{Q\setminus E_{\delta}} \left[\phi'(v)-\phi'\left(v_{n_{k_{l}}} \right)
    \right]^{2}\abs{\nabla_x v}^2bdz\right\}^{\frac{1}{2}}\\
  &\coex{~} \qquad + M\varepsilon\\
  &\coex[\eqref{47z2c} \text{ and } (b)]{\le} \varepsilon \left\{\int_Q  \chi_{E_{\delta}} \abs{\nabla_x v}^2bdz\right\}^{\frac{1}{2}}\\
  &\coex{~} \qquad+ 2M \left\{\int_Q 
    \chi_{Q\setminus E_{\delta}}\abs{\nabla_x v}^2bdz\right\}^{\frac{1}{2}}\\
  &\coex{~} \qquad + M\varepsilon\\
&\coex[\eqref{47z1}]{\le}
  \varepsilon\norm{v}_{b,T}+2M\varepsilon^{2}+M\varepsilon\qquad\forall~l \ge l_{\varepsilon},
\end{align*}
which implies $\left\{\phi\circ v_{n_{k_{l}}}\right\}_{l}$ converges
to $\phi\circ v$ in $W_{b,A,T}(Q)$.

 Since $\left\{\phi \circ v_{n_{k_{l}}}\right\}_{l}$ converges to $w$
 in $W_{b,A,T}(Q)$, $w= \phi\circ v$. By Step 1, we get \eqref{2l51}.
\item[Step 3.] When  $\phi(0)=0
 $,  we see that $\left\{\phi\circ v_{n_{k}}\right\}$ is contained in
 $C^{1}(Q,A)$. Therefore $\phi\circ v$ is in $W_{B,A,T}(Q)$.
\item[Step 4.] Now we  prove $(iii)$. We have 
\begin{align*}
\int_{Q}\abs{\frac{\partial \left[\phi\circ v_{n}- \phi\circ v \right]}{\partial
  t}}^{2}dz
  &= \int_{Q}\abs{\phi'(v_{n})\frac{\partial v_{n}}{\partial
    t}- \phi'(v)\frac{\partial v}{\partial t}}^{2}dz\\
  &= \int_{Q}\abs{\left(\phi'(v_{n})-\phi'(v) \right)\frac{\partial v_{n}}{\partial t}+
  \phi'(v) \left(\frac{\partial v_{n}}{\partial t}-\frac{\partial v}{\partial
    t} \right)}^{2}dz.
\end{align*}

Arguing as in above steps, we can suppose
$\left\{\displaystyle\frac{\partial\left(\phi\circ v_{n_{k_l}}
    \right)}{\partial t}\right\}_{l}$ converges to
$\displaystyle\frac{\partial (\phi\circ v)}{\partial
  t} $ in $L^{2}(Q)$. It follows that $\phi\circ v$ is in
$V_{B,A,T}(Q)$.
\end{description}
\end{proof}

\begin{lemma}
  Let $A$ be admissible with respect to $\Omega$, $v$ be in
  $W_{B,A,T}(Q)$ (respectively, $V_{B,A,T}(Q)$),
  $v^{+}\coloneqq \max\left\{0,v\right\}$,
  $v^{-}\coloneqq \max\left\{0,-v\right\}$.  Assume \eqref{eq:ccc}
  holds. Then 
  \begin{itemize}
  \item[ $(i)$]  $v^{+}$, $v^{-}$ and $|v|$ are in
  $W_{B,A,T}(Q)$ (respectively, $V_{B,A,T}(Q)$).
\item[$(ii)$] If $v$ is
  non-negative on $\Omega$, there is a sequence of non-negative
  functions $\left\{w_{n}\right\}$ in $C^{1}(Q,A)$ such that
  $\lim_{n\to\infty}\norm{w_{n}-v}_{B,T}=0$ (respectively,
  $\lim_{n\to\infty}\norm{w_{n}-v}_{V,B,T}=0$).
\item[$(iii)$] If $\eta\in C^{1}_{c}(\RR^{N+1})$, then $\eta v$ is in
  $W_{B,A,T}(Q)$ (respectively $V_{B,A,T}(Q)$).
  
  \end{itemize}
 \label{w1bb}
\end{lemma}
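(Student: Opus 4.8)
All three parts reduce to approximation by elements of $C^{1}(Q,A)$, and the engine will be Lemma~\ref{2w1ba} applied to the smooth functions $\phi_{m}(s):=\sqrt{s^{2}+m^{-2}}-m^{-1}$ ($m\in\NN$), which satisfy $\phi_{m}(0)=0$, $0\le\phi_{m}(s)\le|s|$, $\bigl||s|-\phi_{m}(s)\bigr|\le m^{-1}$, $\|\phi_{m}'\|_{L^{\infty}(\RR)}\le 1$, $\phi_{m}'(0)=0$, $\phi_{m}'(s)\to\sgn s$ for each $s$, and $\phi_{m}'$ Lipschitz (hence uniformly continuous). The preliminary step is to upgrade \eqref{i3b}--\eqref{i3c} to equalities: for $w\in W_{B,A,T}(Q)$ one has $\|w\|_{B,T}^{2}=\int_{Q}\vuong w\,dz$ with the generalized derivatives furnished by Lemma~\ref{5z}, and similarly $\|w\|_{V,B,T}=\|\partial w/\partial t\|_{L^{2}(Q)}+\{\int_{Q}\vuong w\,dz\}^{1/2}$ on $V_{B,A,T}(Q)$. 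This is quick: taking $\{w_{n}\}\subset C^{1}(Q,A)$ converging to $w$ in the relevant norm, \eqref{i3b} applied to $w_{n}-w$ gives $\int_{Q}\vuong(w_{n}-w)\le\|w_{n}-w\|_{B,T}^{2}\to 0$, so by the Cauchy--Schwarz inequality for the pointwise scalar product $\langle y,u\rangle:=\sum_{i,j}b_{ij}(x)y_{i}u_{j}$ (and \eqref{i3b} once more to bound $\int_{Q}\vuong(w_{n}+w)$) we get $\int_{Q}\vuong w_{n}\to\int_{Q}\vuong w$; since $\int_{Q}\vuong w_{n}=\|w_{n}\|_{C,B,T}^{2}\to\|w\|_{B,T}^{2}$, the identity follows (the $V$-version uses also $\partial w_{n}/\partial t\to\partial w/\partial t$ in $L^{2}$).

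For part~(i): by Lemma~\ref{2w1ba}(ii) (resp.\ (iii) when $v\in V_{B,A,T}(Q)$), $\phi_{m}\circ v$ belongs to $W_{B,A,T}(Q)$ (resp.\ $V_{B,A,T}(Q)$) with $\partial_{i}(\phi_{m}\circ v)=\phi_{m}'(v)\,\partial_{i}v$ (the chain rule being contained in the proof of Lemma~\ref{2w1ba}). Applying the identity above to $\phi_{m}\circ v-\phi_{m'}\circ v$,
\[
  \|\phi_{m}\circ v-\phi_{m'}\circ v\|_{B,T}^{2}=\int_{Q}\bigl(\phi_{m}'(v)-\phi_{m'}'(v)\bigr)^{2}\,\vuong v\,dz ,
\]
and the integrand tends to $0$ pointwise (on $\{v\ne 0\}$ since $\phi_{m}'(v)\to\sgn v$, on $\{v=0\}$ since $\phi_{m}'(0)=0$) while being dominated by $4\,\vuong v\in L^{1}(Q)$, so dominated convergence makes the right-hand side vanish as $m,m'\to\infty$; the same computation with $|\partial v/\partial t|^{2}$ in place of $\vuong v$ controls the time derivative in the $V$-case. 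Hence $\{\phi_{m}\circ v\}_{m}$ is Cauchy in $W_{B,A,T}(Q)$ (resp.\ $V_{B,A,T}(Q)$); since $\|\phi_{m}\circ v-|v|\|_{L^{r}(Q)}\le m^{-1}|Q|^{1/r}\to 0$ and $W_{B,A,T}(Q)\hookrightarrow L^{r}(Q)$ by \eqref{cb}, its limit is $|v|$, so $|v|$ lies in the space, and therefore so do $v^{+}=\tfrac12(|v|+v)$ and $v^{-}=\tfrac12(|v|-v)$.

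For part~(iii): let $\{v_{n}=\tilde v_{n}|_{Q}\}\subset C^{1}(Q,A)$ converge to $v$ in $\|\cdot\|_{B,T}$ (resp.\ $\|\cdot\|_{V,B,T}$), with $\tilde v_{n}\in C^{1}_{c}(\RR^{N+1})$ avoiding $(A\times[0,T])\cup(\overline{\Omega}\times\{0\})$. Then $\eta\tilde v_{n}\in C^{1}_{c}(\RR^{N+1})$ has support inside $\supp\tilde v_{n}$, so $\eta v_{n}:=(\eta\tilde v_{n})|_{Q}\in C^{1}(Q,A)$. From $\nabla_{x}(\eta(v_{n}-v_{m}))=\eta\,\nabla_{x}(v_{n}-v_{m})+(v_{n}-v_{m})\,\nabla_{x}\eta$, Minkowski's inequality for $\langle\cdot,\cdot\rangle$, the bound $\vuong\eta\le\overline{b}\,|\nabla_{x}\eta|^{2}$ (from \eqref{c1}), and Hölder with exponents $\tfrac{r}{2},\tfrac{r}{r-2}$, one gets
\[
  \|\eta v_{n}-\eta v_{m}\|_{B,T}\le\|\eta\|_{L^{\infty}}\|v_{n}-v_{m}\|_{B,T}+\|\nabla\eta\|_{L^{\infty}}\,\|\overline{b}\|_{L^{r/(r-2)}(Q)}^{1/2}\,\|v_{n}-v_{m}\|_{L^{r}(Q)},
\]
which tends to $0$ by \eqref{c6} and \eqref{cb}; so $\{\eta v_{n}\}$ is Cauchy in $\|\cdot\|_{B,T}$, and since it converges to $\eta v$ in $L^{r}(Q)$ its limit is $\eta v\in W_{B,A,T}(Q)$ by \eqref{cb}. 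The $V$-case adds $\partial_{t}(\eta(v_{n}-v_{m}))=\eta\,\partial_{t}(v_{n}-v_{m})+(v_{n}-v_{m})\,\partial_{t}\eta$ with $\|(v_{n}-v_{m})\,\partial_{t}\eta\|_{L^{2}(Q)}\le\|\partial_{t}\eta\|_{L^{\infty}}|Q|^{(r-2)/(2r)}\|v_{n}-v_{m}\|_{L^{r}(Q)}\to 0$. Finally part~(ii) follows from part~(i): for $v\ge 0$ with $v_{n}\to v$ in $C^{1}(Q,A)$, each $\phi_{m}\circ v_{n}$ is a non-negative element of $C^{1}(Q,A)$ (as $\phi_{m}\ge 0$, $\phi_{m}(0)=0$), and Lemma~\ref{2w1ba}(i) (together with its $\partial_{t}$-refinement from the proof of~(iii) in the $V$-case) lets us choose $n(m)$ with $\|\phi_{m}\circ v_{n(m)}-\phi_{m}\circ v\|_{B,T}\le m^{-1}$ (resp.\ $\le m^{-1}$ in $\|\cdot\|_{V,B,T}$); then $w_{m}:=\phi_{m}\circ v_{n(m)}\ge 0$ lies in $C^{1}(Q,A)$ and, using $|v|=v$, $\|w_{m}-v\|_{B,T}\le m^{-1}+\|\phi_{m}\circ v-|v|\|_{B,T}\to 0$ by part~(i), and likewise in $\|\cdot\|_{V,B,T}$.

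The main obstacle is the preliminary step. Because $W_{B,A,T}(Q)$ is defined as an abstract completion rather than as a space of functions with weak derivatives in $L^{2}_{b}$, one cannot simply exhibit the weak gradient of $|v|$; the identity $\|w\|_{B,T}^{2}=\int_{Q}\vuong w$ is exactly what converts the $L^{1}$-dominated-convergence estimate of part~(i) into a genuine Cauchy statement in the norm of $W_{B,A,T}(Q)$. Once that is in place, parts~(ii) and~(iii) are routine bookkeeping with the product rule, Hölder's inequality, \eqref{c6}, and the embedding \eqref{cb}.
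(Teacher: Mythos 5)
Your proof is correct, and its skeleton (compose with a $C^{1}$ mollifier, invoke Lemma~\ref{2w1ba} for membership, run a Cauchy argument via dominated convergence, identify the limit through the $L^{r}$ embedding \eqref{cb}, then a diagonal extraction for (ii) and the product rule for (iii)) is the same as the paper's. The substantive differences are two. First, for part (i) you mollify $|\cdot|$ by $\phi_{m}(s)=\sqrt{s^{2}+m^{-2}}-m^{-1}$ and recover $v^{\pm}=\tfrac12(|v|\pm v)$ afterwards, whereas the paper mollifies $(\cdot)^{+}$ by the piecewise functions $\Phi_{\beta}$ of \eqref{z7} with $\beta=1+\tfrac1n\downarrow 1$ and obtains $|v|=v^{+}+v^{-}$ last; your choice buys the uniform bounds $\|\phi_{m}'\|_{\infty}\le 1$ and $\bigl||s|-\phi_{m}(s)\bigr|\le m^{-1}$, which make the limit identification in $L^{r}$ immediate, while the paper's choice needs a separate dominated-convergence step (using \eqref{hp0db}) to identify the limit in $W_{b,A,T}(Q)$; both satisfy the hypotheses of Lemma~\ref{2w1ba}, so either works. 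Second, you isolate as a preliminary step the identity $\|w\|_{B,T}^{2}=\int_{Q}\vuong w\,dz$ on $W_{B,A,T}(Q)$, upgrading the Fatou inequality \eqref{i3b} to an equality via the triangle inequality for the seminorm $w\mapsto\{\int_{Q}\vuong w\}^{1/2}$; the paper uses this identity tacitly (e.g.\ in the Minkowski step bounding $\|\Phi_{1+1/n}\circ v-\Phi_{1+1/m}\circ v\|_{B,T}$ by integrals of $\vuong$), so making it explicit is a genuine improvement in rigor rather than a detour. Parts (ii) and (iii) follow the paper's route essentially verbatim (diagonal choice of $n(m)$; product rule plus the Hölder estimate against $\|\overline{b}\|_{L^{r/(r-2)}}$, which is the content of \eqref{b0}). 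No gaps.
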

\begin{proof} We only prove for  $W_{B,A,T}(Q)$. Using the techniques
  of these proofs and $(iii)$ in  Lemma \ref{2w1ba}, we obtain the
  results for $V_{B,A,T}(Q)$.
  \begin{itemize}[wide]
  \item[$(i)$] Let $w_{1}$ be $v^{+}$, $w_{2}$ be $v^{-}$, $w_{3}$ be
    $|v|$, and $\left\{v_{n}\right\}$ be a sequence in $C^{1}(Q,A)$
    such that $\left\{v_{n}\right\}$ converges to $v$ in
    $W_{B,A,T}(Q)$.
    
Let $\beta\in (1,2)$. Define
 \begin{align}
   \Phi_{\beta} (\xi)
   &= \begin{cases}
 0 &\text{ if }\xi \in (-\infty,0],  \\
  \xi^{\beta}&\text{ if }\xi \in (0,1),  \\
     \beta(\xi-1)+1  &\text{ if } \xi \in [1 ,\infty).
\end{cases}
\label{z7}
\intertext{We have}
     \Phi_{\beta}' (\xi)
     &= \begin{cases}
 0 &\text{ if } \xi \in (-\infty,0],  \\
 \beta \xi^{\beta-1} & \text{ if } \xi \in (0,1),  \\
    \beta  & \text{ if } \xi \in [1 ,\infty).
\end{cases} 
 \label{z780bz} 
  \end{align}
Thus $\Phi_{\beta}$ and $\Phi_{\beta}'$ are monotone increasing on
$\RR$, $\Phi_{\beta}$ is in $C^{1}(\RR)$, $\Phi_{\beta}'$ is in
$L^{\infty}(\RR)$ and uniformly continuous on $\RR$. By Lemma
\ref{2w1ba},  $\Phi_{\beta}\circ v$ is in $W_{B,A,T}(Q)$ for every
$\beta$ in $(1,2)$.

By \eqref{z780bz}, we get
\begin{align}
&\Phi_{\beta} (\xi) \le \Phi_{\beta} (\zeta)\qquad \text{ if } 0\le \xi\le \zeta<\infty,
\label{hp0d}
\intertext{and}
&0\le \Phi_{\beta}' (\xi) \le \beta \quad \text{ and }\quad \lim_{\beta\to 1}\Phi_{\beta}' (\xi)=1\qquad \forall~ \xi\in (0,\infty).
\label{hp0db}
\end{align}

  Let $y$ be $t$ or $x_{j}$ where $j\in
  \left\{1,\cdots,N\right\}$. Since $v$ has first order generalized
  partial derivatives by Lemma \ref{5z} and  \cite[Lemma 7.6]{gilbargEllipticPartialDifferential2001}, we have
\begin{align}
  \frac{\partial v^{+}}{\partial y}(x)
  &= \begin{cases}
    0 &\text{ if }  v(x) \le 0,   \\
    \dfrac{\partial v}{\partial y}(x) &\text{ if } v(x) > 0, 
\end{cases}
\label{5z1}\\
  \frac{\partial v^{-}}{\partial y}(x)
  &= \begin{cases}
 0 & \text{ if } v(x) \ge 0,  \\
       -\dfrac{\partial v}{\partial y}(x) &\text{ if } v(x) < 0, 
\end{cases}
\label{5z2}\\
  \frac{\partial |v|}{\partial y}(x)
  &= \begin{cases}
  \dfrac{\partial v}{\partial y}(x) &\text{ if } v(x) > 0,  \\
 0 &\text{ if } v(x)= 0,  \\
       -\dfrac{\partial v}{\partial y}(x) &\text{ if } v(x) < 0. 
\end{cases}
\label{5z3}
\end{align}
Put $Q^{+}=\{x\in Q : v(x)>0\}$.
By \eqref{z7}, $\Phi_{1+\frac{1}{n}}\circ v=\Phi_{1+\frac{1}{n}}\circ v^{+}$. We have
\begin{align}
  \label{bzz}
  \int_{Q^{+}}\sum_{i,j=1}^{N}\frac{\partial v^{+}}{\partial
  x_{i}}\frac{\partial v^{+}}{\partial x_{j}}b_{ij}dz
  \coex[\eqref{5z1}]{\le}
  \int_{Q}\frac{\partial v}{\partial x_{i}}\frac{\partial v}{\partial
  x_{j}}b_{ij}dz
  \coex[\eqref{i3b}]{<} \infty.
\end{align}
Moreover,
\begin{align*}
&\lim_{k\to \infty}\int_{Q}\sum_{i,j=1}^{N} \left(\frac{\partial
  \left(\Phi_{1+\frac{1}{k}}\circ v \right)}{\partial x_{i}}-\frac{\partial
  v^{+}}{\partial x_{i}} \right) \left(\frac{\partial
  \left(\Phi_{1+\frac{1}{k}}\circ v \right)}{\partial x_{j}}-\frac{\partial
  v^{+}}{\partial x_{j}} \right)b_{ij} dz \\
&\coex[\eqref{5z1}]{=}
  \lim_{k\to \infty }\int_{Q^{+}}\sum_{i,j=1}^{N}
  \left[\Phi_{1+\frac{1}{k}}'\left(v(x) \right)-1 \right]^{2}
  \left(\frac{\partial v^{+}}{\partial x_{i}}\frac{\partial
  v^{+}}{\partial x_{j}}b_{ij} \right) dz\\
  &\coex[\substack{\eqref{hp0db},\eqref{bzz}\\ \text{Lebesgue DCT}}]{=}0.
\end{align*}

It implies 
\begin{align*}
&\lim_{m,n\to\infty}\norm{\Phi_{1+\frac{1}{n}}\circ v -
  \Phi_{1+\frac{1}{m}}\circ v}_{B,T}\\
  &\coex[\eqref{vuong},~\text{Minkowski}]{\le} \lim_{n\to\infty}\left\{\int_{Q}\vuong
    \left[\left(\Phi_{1+\frac{1}{n}}\circ v \right)-v^{+} \right]
    dz\right\}^{\frac{1}{2}}\\
  &\coex{~} \qquad+\lim_{m\to\infty}\left\{\int_{Q}\vuong
    \left[\left(\Phi_{1+\frac{1}{m}}\circ v \right)-v^{+} \right]
    dz\right\}^{\frac{1}{2}}\\
  &\coex{=}0.
\end{align*}
 Thus, $\left\{\Phi_{1+\frac{1}{n}}\circ v\right\}$ is a Cauchy
 sequence in $W_{B,A,T}(Q)$ and converges to some $w$ in
 $W_{B,A,T}(Q)$. We shall prove $w=v^{+}$.  By Lemma \ref{4l2b},
 $\left\{\Phi_{1+\frac{1}{n}}\circ v\right\}$  converges to $w$ in
 $W_{b,A,T}(Q)$.
 
 We have 
 \begin{align*}
   \left\{\int_{Q^{+}}\abs{\nabla v}^{2}bdz\right\}^{\frac{1}{2}}
     &\coex[\eqref{c1},\eqref{i3},\eqref{bzz}]{<}\infty,
   \intertext{and}
   \lim_{n\to\infty}\norm{\Phi_{1+\frac{1}{n}}\circ v-v^{+}}_{b,T}
   &\coex{=}\lim_{n\to\infty}\left\{\int_{\Omega}\abs{\nabla
     \left(\Phi_{1+\frac{1}{n}}\circ v- v^{+} \right)}^{2}bdz
     \right\}^{\frac{1}{2}}\\
   &\coex[\eqref{5z1}]{=}
     \lim_{n\to \infty}\left\{
     \int_{Q^{+}}[\Phi_{1+\frac{1}{n}}'(v(x))-1]^{2}|\nabla
     v|^{2}bdz\right\}^{\frac{1}{2}}\\
   &\coex[\substack{\eqref{hp0db} \\ \text{Lebesgue DCT}}]{=} 0.
 \end{align*}
Thus $\left\{\Phi_{1+\frac{1}{n}}\circ v\right\}$  converges to
$v^{+}$ in $W_{b,A,T}(Q)$ and $v^{+}=w$. Therefore
$\left\{\Phi_{1+\frac{1}{n}}\circ v\right\}$  converges to $v^{+}$ in
$W_{B,A,T}(Q)$.

 We get the remainder of $(i)$ by replacing $v$ by $-v$ in the above
 arguments and noting that $|v|=v^{+}-v^{-}$.

\item[$(ii)$] Let $\{v_{n}\}$ be a sequence in $C^{1}(A,\Omega)$ such
  that it converges to  $v$ in $W_{B,A,T}(Q)$. By Lemma \ref{2w1ba},
  $\left\{\Phi_{1+\frac{1}{k}}\circ v_{m}\right\}_{m}$ converges to
  $\Phi_{1+\frac{1}{k}}\circ v$ in  $W_{B,A,T}(Q)$ for every $k$ in
  $\NN$. By the proof of $(i)$, $\left\{\Phi_{1+\frac{1}{k}}\circ v \right\}_{k}$
  converges to $v^{+}$ in  $W_{B,A,T}(Q)$. It implies, for each  $n$
  in $\NN$, there are integers  $k_{n}$ and $m_{n}$ such that
  \begin{align*}
    \norm{\Phi_{1+\frac{1}{k_{n}}}\circ v- v^{+}}_{B,T}
    &<\frac{1}{2n},\\
    \norm{\Phi_{1+\frac{1}{k_{n}}}\circ
    v_{m_{n}}-\Phi_{1+\frac{1}{k_{n}}}\circ v}_{B,T}
    &<\frac{1}{2n}.
  \end{align*}
  It implies
  \[
    \norm{\Phi_{1+\frac{1}{k_{n}}}\circ v_{m_{n}}- v^{+}}_{B,T} \le
    \norm{\Phi_{1+\frac{1}{k_{n}}}\circ
      v_{m_{n}}-\Phi_{1+\frac{1}{k_{n}}}\circ
      v}_{B,T}+\norm{\Phi_{1+\frac{1}{k_{n}}}\circ v-
      v^{+}}_{B,T}<\frac{1}{n}. \] Thus
  $\left\{\Phi_{1+\frac{1}{k_{n}}}\circ v_{m_{n}}\right\}_n$ converges
  to $v^{+}=v$ in $W_{B,A,T}(Q)$. Moreover,
  $\Phi_{1+\frac{1}{k_{n}}}\circ v_{m_{n}} \in C^1(A,\Omega)$ and
  $\Phi_{1+\frac{1}{k_{n}}}\circ v_{m_{n}}\ge 0$. Therefore, we obtain
  $(ii)$ with $w_{n}=\Phi_{1+\frac{1}{k_{n}}}\circ v_{m_{n}}$ for
  every $n$ in $\NN$.
  
\item[$(iii)$] For all $u\in
      W_{B,A,T}(Q),$
  \begin{align}   \label{b0}
    \begin{split}
      &\left\{\int_{Q}|u|^{2}(1+\overline{b})dz\right\}^{\frac{1}{2}}\\
      &\coex[\text{H\"older},~ \eqref{c6}]{\le}  \left\{\int_{Q} \left(1+\overline{b} \right)^{\frac{r}{r-2}}dz\right\}^{\frac{r-2}{2r}}\left\{\int_{Q}|u|^{r}dz\right\}^{\frac{1}{r}}
\\
&\coex[\eqref{a12}]{\le}\left\{\int_{Q} \left(1+\overline{b} \right)^{\frac{r}{r-2}}dx\right\}^{\frac{r-2}{2r}}C(r,\Omega,A)T^{\frac{2\overline{t}-2}{\overline{2t}}}\left\{\int_{Q}
  b^{-\frac{\bar{t}}{2-\bar{t}}}dz\right\}^{\frac{2-\overline{t}}{\overline{2t}}}\norm{u}_{B,T}\\
      &\coex{=}
        C(r,\Omega,A)T^{\frac{2\overline{t}-2}{\overline{2t}}}\norm{1
        + \bar{b}}_{L^{\frac{r}{r-2}}(Q)}^{\frac{1}{2}}
        \norm{b^{-1}}_{L^{\frac{\bar{t}}{2-\bar{t}}}(Q)}^{\frac{1}{2}} \norm{u}_{B,T}.
    \end{split}
  \end{align}
Since $\eta\in C^{1}_{c}(\RR^{N})$, there is a positive real number $M$ such that $|\eta|+|\nabla \eta|\le M $. Let  $\left\{v_{n}\right\}$ be a sequence in $C^{1}(Q,A)$ such that $\left\{v_{n}\right\}$ converges to $v$ in $W_{B,A,T}(Q)$. Then $\eta v_{n}$ is in  $C^{1}(Q,A)$ for every $n$ in $\NN$. Arguing as in \eqref{b1}, we have
 \begin{align*}
   &\norm{\eta v_{n}- \eta v_{m}}_{B,T}^{2}\\
   &\coex[\eqref{vuong}]{=}
     \int_{Q}\vuong \left(\eta v_{n}- \eta v_{m} \right)dz \\
   &\coex[\text{Schwarz}]{\le} 2\int_{Q} \left[(v_{n}-v_{m})^{2}\vuong \eta +
     \eta^{2}\vuong (v_{n}-  v_{m}) \right]dz\\
   &\coex{\le}2 \int_{Q} \left[M^{2} \left(v_{n}-v_{m} \right)^{2} \left(1
     +\overline{b} \right)+ M^{2}\vuong \left(v_{n}-  v_{m} \right)
     \right]dz\\
   &\coex[\eqref{c1}]{\le}
     2M^{2}\int_{Q} \left[(v_{n}-v_{m})^{2} \left(1+\overline{b} \right) + \vuong (v_{n}-
     v_{m}) \right]dz\\
   &\coex[\eqref{b0}]{\le} 2M^{2} \left[C(r,\Omega,A)^{2}T^{\frac{2\overline{t}-2}{\overline{t}}}\norm{1
        + \bar{b}}_{L^{\frac{r}{r-2}}(Q)}
        \norm{b^{-1}}_{L^{\frac{\bar{t}}{2-\bar{t}}}(Q)} +1 \right]\norm{v_{n}-v_{m}}^{2}_{B,T}
 \end{align*}
 for every $m,n\in\NN$.
 
 Thus $\left\{\eta v_{n}\right\}$ is a Cauchy sequence in
 $W_{B,A,T}(Q)$. Then it converges to some $w$ in $W_{B,A,T}(Q)$. By \eqref{cb}, $\left\{\eta v_{n}\right\}$ converges to $w$ in $W_{b,A,T}(Q)$. We shall prove $w=\eta v$. We have
 \begin{align*}
 &\left\{\int_{Q}\abs{\nabla \left(\eta v_{n}-\eta v
   \right)}^{2}bdz\right\}^{\frac{1}{2}}\\
   &\coex{=} \left\{\int_{Q} \left[\left(v_{n}-v
     \right)^{2}\abs{\nabla\eta}^{2}+\eta^{2}\abs{\nabla
     (v_{n}-v)}^{2} \right]bdz\right\}^{\frac{1}{2}}\\
 &\coex[\text{Minkowski}]{\le}
   M \left[\left\{\int_{Q}(v_{n}-v)^{2}(1+b)dz\right\}^{\frac{1}{2}}
 +\left\{\int_{Q}|\nabla(v_{n}-v)|^{2}bdz\right\}^{\frac{1}{2}}
   \right]\\
 &\coex[\eqref{c1} \text{ and }\eqref{b0}]{\le} M
   \left[C(r,\Omega,A)^{2}T^{\frac{2\overline{t}-2}{\overline{t}}}\norm{1
        + \bar{b}}_{L^{\frac{r}{r-2}}(Q)}
        \norm{b^{-1}}_{L^{\frac{\bar{t}}{2-\bar{t}}}(Q)} +1 \right]\norm{v_{n}-v}^{2}_{B,T}
 \end{align*} 
 for every $n\in\NN$.
 
 Thus $\left\{\eta v_{n}\right\}$ converges to $\eta v$ in
 $W_{b,A,T}(Q)$ and hence, $w=\eta v$. Therefore, we get $(iii)$.
 \end{itemize}
 \end{proof}
 \section{Auxiliary functions}\label{aux}
 
 In \cite{serrinLocalBehaviorSolutions1964}, Serrin obtained the local
 boundedness of solutions of quasi-linear equations. In this section,
 we shall use Serrin's method to get the global boundedness of
 solutions of a class of parabolic equations. However, these technique
 are only applicable to local regularity. Therefore, we need to modify
 the auxiliary functions in \cite{serrinLocalBehaviorSolutions1964} as
 follows.
\begin{definition} Let  $s \in (1,\infty)$ and $l \in
  [3,\infty)$. Let
  $$\eta_{s,l}\coloneqq (1-s^{2})l^{s},\quad a_{s,l}\coloneqq \frac{1}{2}s(s+1)l^{s-1},\quad b_{s,l}\coloneqq \frac{1}{2} s(s-1)l^{s+1},$$  and 
  \begin{align}
    F_{s,l}(t)
    &\coloneqq
    \begin{cases}
|t|^{s}&\text{ if } |t| ~\le l,\\
\eta_{s,l}+a_{s,l}|t| +b_{s,l}|t|^{-1}&\text{ if } |t| > l.
\end{cases}
\label{f1b}\\
G_{s,l}&\coloneqq F_{s,l}F'_{s,l}.
\label{f1b2}
\end{align}
\label{def61}
\end{definition}

We have the following results.
\begin{lemma}
  Let $s \in (1,\infty)$ and $l \in [3,\infty)$. Then
  \begin{itemize}
  \item[$(i)$]  $F_{s,l}$ and $G_{s,l}$ are in $C^{1}(\RR)$.
  \item[$(ii)$] $F'_{s,l}$ and $G'_{s,l}$ are in $L^{\infty}(\RR)$ and
  uniformly continuous on $\RR$ and
  \begin{align}
\abs{tF'_{s,l}(t)}
    &\le 4sF_{s,l}(t)\qquad \forall~t\in\RR,
  \label{ss1}\\
  \abs{F'_{s,l}}^{2}
  &\le s^{2}G'_{s,l}(t)\qquad \forall~t\in \RR,
  \label{ss2}\\
  \abs{G_{s,l}(t)}
  &= sF_{s,l}
^{2-\frac{1}{s}}(t)
\qquad ~\text{if }|t|\le 1,
\label{ss3}\\
F_{s,l}(t)
&\le F_{s,k}(t)
\qquad \forall~s\in(1,\infty), t\in\RR,~ l<k.
\label{ss4}
\end{align}
\end{itemize}

\label{lem62}
\end{lemma}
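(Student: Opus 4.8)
The plan is to reduce everything to explicit one-variable computations in the two regions $\{|t|\le l\}$ and $\{|t|>l\}$, with the junction $|t|=l$ handled by a short algebraic identity: the constants $\eta_{s,l},a_{s,l},b_{s,l}$ in Definition~\ref{def61} are chosen precisely so that, at $|t|=l$, the one-sided limits of $F_{s,l}$, of $F_{s,l}'$, and even of $F_{s,l}''$ coincide (they equal $l^s$, $sl^{s-1}$, and $s(s-1)l^{s-2}$ respectively). For part $(i)$, I would first record the formulas $F_{s,l}'(t)=s|t|^{s-1}\sgn(t)$, $F_{s,l}''(t)=s(s-1)|t|^{s-2}$ on $\{0<|t|\le l\}$ and $F_{s,l}'(t)=(a_{s,l}-b_{s,l}|t|^{-2})\sgn(t)$, $F_{s,l}''(t)=2b_{s,l}|t|^{-3}$ on $\{|t|>l\}$. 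Since $s>1$, $F_{s,l}(t)=|t|^s$ is $C^1$ at $t=0$ as well, so $F_{s,l}\in C^1(\RR)$; moreover $F_{s,l}^2\in C^2(\RR)$, because near $0$ it equals $|t|^{2s}$ with $2s>2$ and the matching above makes $(F_{s,l}^2)''=2(F_{s,l}')^2+2F_{s,l}F_{s,l}''$ continuous at $|t|=l$. As $G_{s,l}=\tfrac{1}{2}(F_{s,l}^2)'$, this gives $G_{s,l}\in C^1(\RR)$ with $G_{s,l}'=(F_{s,l}')^2+F_{s,l}F_{s,l}''$.

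For the $L^\infty$ bounds in part $(ii)$: on $\{|t|\le l\}$, $|F_{s,l}'(t)|\le sl^{s-1}$ and $G_{s,l}'(t)=s(2s-1)|t|^{2s-2}\le s(2s-1)l^{2s-2}$; on $\{|t|>l\}$, $|F_{s,l}'(t)|\le a_{s,l}$ and $G_{s,l}'(t)=(a_{s,l}-b_{s,l}|t|^{-2})^2+2b_{s,l}|t|^{-3}F_{s,l}(t)$, which is bounded since $F_{s,l}(t)=O(|t|)$. For uniform continuity, $F_{s,l}'(t)\to\pm a_{s,l}$ and $G_{s,l}'(t)\to a_{s,l}^2$ as $t\to\pm\infty$, so $F_{s,l}'$ and $G_{s,l}'$ extend to continuous functions on the two-point compactification of $\RR$ and are hence uniformly continuous on $\RR$.

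The four inequalities are checked region-by-region. Inequality \eqref{ss3} is immediate: $|t|\le 1<3\le l$ puts one in the range $F_{s,l}(t)=|t|^s$, $G_{s,l}(t)=s|t|^{2s-1}\sgn(t)$, so $|G_{s,l}(t)|=s|t|^{2s-1}=sF_{s,l}(t)^{2-1/s}$. For \eqref{ss2}, $s^2G_{s,l}'-|F_{s,l}'|^2=(s^2-1)(F_{s,l}')^2+s^2F_{s,l}F_{s,l}''\ge 0$, since $s>1$ and $F_{s,l},F_{s,l}''\ge 0$ in both regions. For \eqref{ss1}: on $\{|t|\le l\}$ it is the equality $|tF_{s,l}'(t)|=s|t|^s=sF_{s,l}(t)$; on $\{|t|>l\}$, substituting the formulas gives $4sF_{s,l}(t)-|tF_{s,l}'(t)|=4s\eta_{s,l}+(4s-1)a_{s,l}|t|+(4s+1)b_{s,l}|t|^{-1}$, and one verifies this is nondecreasing in $|t|$ on $[l,\infty)$ with value $3sl^s>0$ at $|t|=l$ (this is where the definitions of $\eta_{s,l},a_{s,l},b_{s,l}$ re-enter). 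For the monotonicity \eqref{ss4}: when $|t|<l$, $F_{s,l}(t)=|t|^s$ does not depend on $l$; when $|t|>l$, a direct differentiation yields $\partial_l F_{s,l}(t)=\tfrac{1}{2}s(s^2-1)l^{s-2}|t|^{-1}(|t|-l)^2\ge 0$, so $l\mapsto F_{s,l}(t)$ is nondecreasing on $[3,\infty)$ and $F_{s,l}\le F_{s,k}$ whenever $l<k$.

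I expect the only real obstacle to be bookkeeping: getting the matching conditions at $|t|=l$ right, evaluating $4sF_{s,l}-|tF_{s,l}'|$ correctly at and beyond $|t|=l$ for \eqref{ss1}, and carrying out the $l$-derivative computation for \eqref{ss4}. Once the explicit formulas for $F_{s,l}'$ and $F_{s,l}''$ are in place, every assertion reduces to an elementary positivity check.
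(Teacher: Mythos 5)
Your proposal is correct, and for parts $(i)$, \eqref{ss1}, \eqref{ss2}, \eqref{ss3} it follows essentially the same computational route as the paper: matching one-sided limits of $F_{s,l}, F'_{s,l}, F''_{s,l}$ at $|t|=l$, deducing $G'_{s,l}=(F'_{s,l})^2+F_{s,l}F''_{s,l}\ge (F'_{s,l})^2/s^2$ from nonnegativity of $F_{s,l}, F''_{s,l}$ and $s>1$, and reducing \eqref{ss1} on $\{|t|>l\}$ to showing $4s\eta_{s,l}+(4s-1)a_{s,l}|t|+(4s+1)b_{s,l}|t|^{-1}$ is nondecreasing on $[l,\infty)$ with value $3sl^s$ at $|t|=l$ --- this is precisely the paper's polynomial $f(x)=f(l^{-1}|t|)$, just left unsubstituted.

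Where you diverge, and improve, is \eqref{ss4}: the paper treats $t\in[l,k]$ and $t\in[k,\infty)$ by two separate auxiliary functions $g$ and $\overline{h}$, whereas you apply a single $\partial_l$ computation, obtaining the explicit perfect-square form $\partial_l F_{s,l}(t)=\tfrac{1}{2}s(s^2-1)l^{s-2}|t|^{-1}(|t|-l)^2\ge 0$ for $|t|>l$ (equivalent to the paper's $\overline{h}'(\xi)\ge 0$ after clearing denominators), then noting that $l\mapsto F_{s,l}(t)$ is constant for $l\ge|t|$ and continuous at the crossing $l=|t|$ by the matching identity $\eta_{s,l}+a_{s,l}l+b_{s,l}l^{-1}=l^s$. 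This removes the case split and makes the nonnegativity manifest at a glance. Your uniform-continuity argument via the limits $F'_{s,l}(\pm\infty)=\pm a_{s,l}$, $G'_{s,l}(\pm\infty)=a_{s,l}^2$ is also a cleaner replacement for the paper's terse "by \eqref{f12b} and \eqref{f12c}." The only small omission is that you should state (as the paper does in its display \eqref{f11}) that $a_{s,l}-b_{s,l}|t|^{-2}\ge sl^{s-1}>0$ for $|t|\ge l$, since this is what justifies removing the absolute value to write $|tF'_{s,l}(t)|=a_{s,l}|t|-b_{s,l}|t|^{-1}$; it is easy but should be recorded.
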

\begin{proof} 
\begin{itemize}[wide]
\item[$(i)$] Note that
  \begin{align}
    F'_{s,l}(t)
    &= \begin{cases}
      s.\sign t\abs{t}^{s-1} &\text{ if } \abs{t} \le l,\\
      \sign t \left( a_{s,l} -b_{s,l}\abs{t}^{-2}  \right)& \text{ if } l<|t|,
    \end{cases}
      \label{f12b}\\
    F''_{s,l}(t)
    &= \begin{cases}
      s(s-1)|t|^{s-2} &\text{ if } |t| \le l,\\
      2b_{s,l}|t|^{-3}&\text{ if } l<|t|,
    \end{cases}
      \label{f12c}\\
    G_{s,l}(t)
    &= s\left(\sign t \right)|t|^{2s-1}\qquad \text{ if } \abs{t} < l.
      \label{f12d}\\
    G'_{s,l}(t)
    &= \left(F'_{s,l} \right)^{2}(t)+F_{s,l}F''_{s,l}(t)\qquad \text{ for all } t \in \RR.
      \label{f12f}\\
    G'_{s,l}(t)
    &= s(2s-1)\abs{t}^{2(s-1)}\qquad \text{ if } \abs{t} < l.
      \label{f12e}
  \end{align}
 Since   $s>1$, by \eqref{f1b}, \eqref{f12b}, \eqref{f12d} and \eqref{f12e}, $F_{s,l}\vert_{(-2,2)}$ and $G_{s,l}\vert_{(-2,2)}$ are of class $C^{1}((-2,2))$. By definitions of $a_{s,l}$, $b_{s,l}$ and $\eta_{s,l}$, we have
  \begin{align}
    \lim_{t\to l^{-}}F_{s,l}''(t)
    &=s(s-1)l^{s-2}=2b_{s,l}\abs{l}^{-3}=\lim_{t\to l^{+}}F_{s,l}''(t),
  \label{f8}\\
  \lim_{t\to l^{-}}F_{s,l}'(t)
  &= sl^{s-1}= \frac{1}{2}s(s+1)l^{s-1}-   \frac{1}{2}s(s-1)l^{s-1}= a_{s,l}-b_{s,l}l^{-2}=\lim_{t\to l^{+}}F_{s,l}'(t),
    \label{f9}
    \intertext{and,}
    \begin{split}
  \lim_{t\to l^{-}}F_{s,l}(t)
      &= l^{s}\\
      &=(1-s^{2})l^{s}+\frac{1}{2}s(s+1)l^{s} + \frac{1}{2}s(s-1)l^{s}
  \\
  &=\eta_{s,l}+a_{s,l}l +b_{s,l}l^{-1}=\lim_{t\to l^{+}}F_{s,l}(t).
    \end{split}
      \label{f10}
\end{align}
By \eqref{f12b}--\eqref{f10}, $F'_{s,l}\vert_{\RR\setminus(-1,1)}$ and
$G'_{s,l}\vert_{\RR\setminus (-1,1)}$ are of class
$C(\RR\setminus(-1,1))$. Thus $F_{s,l}$ and $G_{s,l}$ are in
$C^{1}(\RR)$.
\item[$(ii)$] By \eqref{f12b} and \eqref{f12c}, $F'_{s,l}$ is
  uniformly continuous and bounded on $\RR$. Thus, by \eqref{f12f},
  $G'_{s,l}$ is uniformly continuous and bounded on $\RR$.
  
  Let $t\in \RR\setminus [-l,l]$. Put $x=l^{-1}|t|$. By definition, we
  have
   \begin{align}
     \begin{split}
       a_{s,l}-b_{s,l}|t|^{-2}
       &\coex{=}\frac{1}{2}s(s+1)l^{s-1}-\frac{1}{2}
         s(s-1)l^{s+1}|t|^{-2}\\
       &\coex[l|t|^{-1} \le 1]{\ge}
         \frac{1}{2}sl^{s-1}[(s+1)-(s-1)] ,
     \end{split}
  \label{f11}
   \end{align}
   thus 
   \begin{align*}
     &4sF_{s,l}(t)-\abs{tF'_{s,l}(t)}\\
     &\coex[\eqref{f11}]{=}
       4s\eta_{s,l}
       +4sa_{s,l}|t|+4sb_{s,l}|t|^{-1}-a_{s,l}|t|+b_{s,l}|t|^{-1}\\
     &\coex{=} 4s\left(1-s^{2} \right)l^{s}+2s^{2}(s+1)l^{s-1}|t|+
       2s^{2}(s-1)l^{s+1}|t|^{-1}\\
     &\coex{~}\quad -\frac{1}{2}s(s+1)l^{s-1}|t| +\frac{1}{2}
       s(s-1)l^{s+1}|t|^{-1}\\
     &\coex{=}
       4s\left(1-s^{2} \right)l^{s}+\left(2s^{2}-\frac{1}{2}s \right)(s+1)l^{s-1}|t|+\left(2s^{2}+\frac{1}{2}s \right)(s-1)l^{s+1}|t|^{-1}\\
     &\coex{=} l^{s+1}|t|^{-1}\left\{4s\left(1-s^{2} \right)l^{-1}|t|
       +\left(2s^{2}-\frac{1}{2}s \right)(s+1)l^{-2}t^{2}
       +\left(2s^{2}+\frac{1}{2}s \right)(s-1)\right\}\\
     &\coex{=} l^{s+1}|t|^{-1}f(x),
   \end{align*}
where $x \coloneqq l^{-1}\abs{t}$ and
\[ f(z)\coloneqq \left(2s^{2}-\frac{1}{2}s
  \right)(s+1)z^{2}+4s\left(1-s^{2} \right)z+\left(2s^{2}+\frac{1}{2}s
  \right)(s-1)\qquad\forall~z\in \RR.
\]
Let $x\in [1,\infty)$. By computation, we get
\begin{align*}
  f'(x)
  &\coex{=} 2\left(2s^{2}-\frac{1}{2}s \right)(s+1)x+4s\left(1-s^{2}
    \right)\\
  &\coex[x\ge 1]{\ge} 2\left(2s^{2}-\frac{1}{2}s \right)(s+1)+4s(1-s^{2})\\
  &\coex{=} 3s^{2}+3s \coex[s>1]{>} 0 \qquad \forall~x\in [1,\infty),
    \intertext{and}
    f(1)
  &\coex{=}    (2s^{2}-\frac{1}{2}s)(s+1)+4s(1-s^{2})+(2s^{2}+\frac{1}{2}s)(s-1)=3s>0.
\end{align*}
It follows that $f(x)>0$ for every $x$ in $[1,\infty)$ and
\begin{equation}
\abs{tF'_{s,l}(t)}\le 4sF_{s,l}(t)\qquad\forall~s\in(1,\infty),~ t\in \RR\setminus [-l,l].
  \label{ff11}
\end{equation}
On the other hand
\begin{equation}
  \abs{tF'_{s,l}(t)}= s \abs{t}^{s} =sF_{s,l}(t)\qquad\forall~ t\in [-l,l].
  \label{f11b}
\end{equation}
Combining \eqref{ff11} and \eqref{f11b}, we get \eqref{ss1}.
  
By \eqref{ss1} and \eqref{f12c}, $F_{s,l}$ and $F''_{s,l}$ are
non-negative on $\RR$. It implies
\begin{equation}
  \abs{F'_{s,l}(t)}^{2}
  \coex[\eqref{f12f}]{\le} G'_{s,l}(t)\qquad\forall~ t\in \RR
  \label{f11bb}
\end{equation}
and we get \eqref{ss2}.

We have
\[
  \abs{G_{s,l}(t)}=s|t|^{2s -1}=s|t|^{s\left(2 -\frac{1}{s} \right)}=sF_{s,l}(t)^{2 -\frac{1}{s}}
   \qquad\text{ if }|t|\le 1, \]
which implies \eqref{ss3}.
 
Fix $s\in(1,\infty)$, $l\in [3,\infty)$ and $k \in(l,\infty)$.  Put
$x\coloneqq l^{-1}t\ge 1$ for every $t\in [l,k]$, and
\begin{align}
  g(t)
  &\coloneqq t^{s}- \left(1-s^{2} \right)l^{s}-\frac{1}{2}s(s+1)l^{s-1}t - \frac{1}{2} s(s-1)l^{s+1}|t|^{-1}\qquad\forall~t\in [l,k],
    \label{lk0}\\
  h(x) &\coloneqq sx^{s-1}-\frac{1}{2}s(s+1)+\frac{1}{2}s(s-1)x^{-2}\qquad\forall~x\in[1,\infty).
\label{lk0b}
 \end{align}
 We have
 \begin{align*}
   &h(1)=0,\\
   %\label{lk1}\\
   &h'(x)=s(s-1)x^{s-2} -s(s-1)x^{-3}
=s(s-1)x^{-3}(x^{s+1}-1) \ge 0\qquad\forall~x\in[1,\infty), %\label{lk2}
 \end{align*}
 so
 \begin{equation}
 h(x) \ge  0 \qquad\forall~x\in[1,\infty).
  \label{lk3}
\end{equation}
On the other hand,
\begin{align}
  g(l)
  &\coex{=} 0,
    \label{lk4}\\
  \begin{split}
     g'(t)
  &\coex{=} st^{s-1}-\frac{1}{2}s(s+1)l^{s-1}+\frac{1}{2}
    s(s-1)l^{s+1}|t|^{-2}\\
  &\coex[\eqref{lk0b}]{=}l^{s-1}h(x) \coex[\eqref{lk3}]{\ge} 0\qquad\forall~t\in [l,k].
  \end{split}
\label{lk5}
 \end{align}
Combining \eqref{lk4} and \eqref{lk5}, by  the Mean Value Theorem, we
have 
\begin{align*}
  g(t)
  &= g(t) - g(l)
    = g'(c_{t,l}) (t-l) \ge 0,
\end{align*}
so
\[
  t^{s}\ge \left(1-s^{2} \right)l^{s}+\frac{1}{2}s(s+1)l^{s-1}t + \frac{1}{2} s(s-1)l^{s+1}|t|^{-1}\qquad\forall~t\in [l,k]
\]
or 
\begin{equation}
  F_{s,k}(t)\ge F_{s,l}(t)\qquad\forall~t\in [l,k].
\label{sla}
\end{equation}

Fix $t$ in $[k,\infty)$. Put
\[\overline{h}(\xi)\coloneqq F_{s,\xi}(t)=(1-s^{2})\xi^{s}+ \frac{1}{2}s(s+1)\xi^{s-1}t+\frac{1}{2}s(s-1)\xi^{s+1}t^{-1}\hk\forall~\xi\in[l,k].\]
Since $\xi^{-1}t++\xi t^{-1} \ge 2$, we have 
\begin{align*}
  \overline{h}'(\xi)
  &= s\left(1-s^{2} \right)\xi^{s-1}+ \frac{1}{2}s\left(s^{2}-1
    \right)\xi^{s-2}t+\frac{1}{2}s\left(s^{2}-1 \right)\xi^{s}t^{-1}\\
  &= s\left(s^{2}-1 \right)\xi^{s-1} \left[-1+ \frac{1}{2} \left(\xi^{-1}t+\xi t^{-1} \right) \right]\ge 0 \qquad\forall~\xi\in[l,k].
\end{align*}
This  implies
\begin{equation}F_{s,k}(t)=\overline{h}(k)\ge  \overline{h}(l) =F_{s,l}(t)\qquad\forall~t\in [k,\infty).
\label{slb}
\end{equation}
Combining \eqref{sla} and \eqref{slb}, we get \eqref{ss4}.
\end{itemize}
\end{proof}

For the case  $s\in (\frac{1}{2},1]$, we have following auxiliary functions.
\begin{definition}
  Let $s \in \left(\frac{1}{2},1 \right]$. Put
  \begin{align}
    \theta(t)
    &\coloneqq \begin{cases}
      \frac{3}{8} \abs{t}^{\frac{1}{2}} \left(\abs{t}^{2}-\frac{10}{3}\abs{t} +5
      \right)&\text{ if } \abs{t} \le 1,\\
      1 &\text{ if } 1 <\abs{t},
\end{cases}
\label{fs1}\\
\overline{\theta}_{s}(t)
&\coloneqq \abs{t}^{s}\qquad\forall~t\in\RR ,
\label{fs2}\\
F_{s}
&\coloneqq \theta\overline{\theta}_{s},
\label{fs3}\\
G_{s}
&\coloneqq F_{s}F'_{s}.
\label{fs4}\\
\overline{F}(t)
&\coloneqq \begin{cases}
0&\text{ if } \abs{t} \le 1,\\
\abs{t} &\text{ if } 1 <\abs{t}.
\end{cases}
\label{fs1z}
\end{align}
\label{defs1}
\end{definition}

We shall use the commands \texttt{D[f,t]}, \texttt{Expand[expr]}, \texttt{Collect[expr,t]} of the
software \texttt{Mathematica} in computations. In order to verify a equation,
we can use the commands  \texttt{Expand} and \texttt{Collect} to the most complicated
side to get the remain side of the equation. Of course we can do these
jobs by hand.  We have the following results.

\begin{lemma} Let $s\in \left(\frac{1}{2},1 \right]$. We have 
\begin{itemize}
\item[$(i)$] $F_{s}$  is in $C^{1}(\RR)$ and $F'_{s}$ is in
  $L^{\infty}(\RR)$ and uniformly continuous on $\RR$,
\item[$(ii)$]  $G_{s}$ is in $C^{1}(\RR)$ and $G'_{s}$ is in
$L^{\infty}(\RR)$ and uniformly continuous on $\RR$.
\end{itemize}
\label{lemfs1}
\end{lemma}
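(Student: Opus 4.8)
The plan is to localize the difficulty to the three points $t=0$ and $t=\pm 1$, since on each of the open sets $(0,1)$, $(1,\infty)$, $(-1,0)$, $(-\infty,-1)$ the function $F_{s}=\theta\overline{\theta}_{s}$ is a finite linear combination of pure powers of $|t|$ and hence real-analytic. Explicitly,
\[
  F_{s}(t)=\tfrac{3}{8}\,|t|^{s+1/2}\bigl(|t|^{2}-\tfrac{10}{3}|t|+5\bigr)\quad\text{for }|t|\le 1,\qquad F_{s}(t)=|t|^{s}\quad\text{for }|t|>1,
\]
so on those open sets $F_{s}$, $F_{s}'$, $F_{s}''$ are smooth, $G_{s}=F_{s}F_{s}'$ is smooth, and $G_{s}'=(F_{s}')^{2}+F_{s}F_{s}''$ there by the product rule; moreover $F_{s}$ is even, so it suffices to analyze the junctions at $t=1$ and at $t=0$ from the right.

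First I would treat the junction $t=1$. A short differentiation of $\theta$ on $(0,1)$ (which one may check with \texttt{Mathematica}, as suggested before the statement) gives $\theta(1)=1$, $\theta'(1)=0$, $\theta''(1)=0$; combined with $\overline{\theta}_{s}(t)=|t|^{s}$ this yields $F_{s}(1^{-})=1$, $F_{s}'(1^{-})=s$, and $F_{s}''(1^{-})=s(s-1)$, which are exactly the values of $F_{s}$, $F_{s}'$, $F_{s}''$ at $1^{+}$. Hence $F_{s}$ is $C^{2}$ across $t=1$ (and, by evenness, across $t=-1$), so $G_{s}$ is $C^{1}$ there with $G_{s}'(1)=s^{2}+s(s-1)=s(2s-1)$. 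On $\{|t|\ge 1\}$ one computes directly $F_{s}'(t)=s|t|^{s-1}\sign t$ and $G_{s}'(t)=s(2s-1)|t|^{2s-2}$; since $s\le 1$ these are bounded (by $s$ and $s(2s-1)$, respectively), and since their derivatives carry the non-positive exponents $s-2$ and $2s-3$ they are Lipschitz on $\{|t|\ge 1\}$, hence uniformly continuous there.

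The main obstacle is the origin, and this is exactly where the factor $|t|^{1/2}$ built into $\theta$ and the hypothesis $s>1/2$ are needed. The exponents that occur are $s+1/2\in(1,3/2]$, $s-1/2\in(0,1/2]$, and $2s-1\in(0,1]$. From $F_{s}(t)=O(|t|^{s+1/2})$ with $s+1/2>1$ one gets that $F_{s}$ is differentiable at $0$ with $F_{s}'(0)=0$, and from $F_{s}'(t)=O(|t|^{s-1/2})$ with $s-1/2>0$ that $F_{s}'(t)\to 0$ as $t\to 0$; thus $F_{s}\in C^{1}(\RR)$. For $G_{s}$ the dangerous term is $F_{s}F_{s}''$: here $F_{s}''(t)=O(|t|^{s-3/2})$ does blow up as $t\to0$, but it is multiplied by $F_{s}(t)=O(|t|^{s+1/2})$, so $F_{s}(t)F_{s}''(t)=O(|t|^{2s-1})\to 0$, and likewise $(F_{s}'(t))^{2}=O(|t|^{2s-1})\to 0$ and $G_{s}(t)/t=F_{s}(t)F_{s}'(t)/t=O(|t|^{2s-1})\to 0$. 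Hence $G_{s}$ is differentiable at $0$ with $G_{s}'(0)=0$ and $G_{s}'(t)\to 0=G_{s}'(0)$, so $G_{s}\in C^{1}(\RR)$.

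It remains to record boundedness and uniform continuity of $F_{s}'$ and $G_{s}'$. Both are continuous on $\RR$ by the above, hence bounded and uniformly continuous on the compact interval $[-1,1]$; on $(-\infty,-1]\cup[1,\infty)$ the explicit formulas of the second paragraph show they are bounded and Lipschitz, hence uniformly continuous. A function that is continuous on $\RR$ and bounded and uniformly continuous on each of $(-\infty,-1]$, $[-1,1]$, $[1,\infty)$ is bounded and uniformly continuous on $\RR$, by the usual gluing argument. This establishes $(i)$ and $(ii)$.
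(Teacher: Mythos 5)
Your argument is correct and follows essentially the same route as the paper: explicit piecewise formulas for $F_s$, $F_s'$, $F_s''$ and $G_s$, $G_s'$, matching of $\theta$, $\theta'$, $\theta''$ across $t=\pm 1$, and exponent bookkeeping ($s+\tfrac12>1$, $s-\tfrac12>0$, $2s-1>0$) at the origin. If anything you are more careful than the paper near $t=0$: the paper's assertion that ``$\theta\in C^{2}(\RR)$'' is not literally true there (since $\theta'(t)\sim\tfrac{15}{16}\,|t|^{-1/2}\sign t$), and it is precisely the extra factor $|t|^{s}$ in $F_{s}=\theta\,\overline{\theta}_{s}$ together with $s>\tfrac12$, which you track explicitly, that yields $F_{s},G_{s}\in C^{1}(\RR)$.
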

 \begin{proof} 
 \begin{itemize}[wide]
 \item[$(i)$] We have
\begin{subequations}
\begin{align}
  \theta'(t)
  &\coex[\mathtt{D[.]},~\mathtt{Expand[.]}]{=}
    \frac{3}{8} \left( \sign t \right) \abs{t}^{-\frac{1}{2}} \left(\frac{5}{2}t^{2}-5 \abs{t}+\frac{5}{2} \right)\qquad~if~|t|\le 1,
    \label{fs500}\\
  \theta''(t)
  &\coex[\mathtt{D[.]}]{=}
    \frac{3}{8}
    \abs{t}^{-\frac{3}{2}} \left(\frac{15}{4}t^{2}-\frac{5}{2}t-\frac{5}{4}
    \right)\qquad \text{ if } \abs{t}\le 1,
    \label{fs50}\\
  \theta \left(\RR\setminus(-1,1) \right)
  &\coex{=}  \{1\},
    \label{fs5}\\
  \theta'\left(\RR\setminus(-1,1) \right)
  &\coex{=}\theta''\left(\RR\setminus(-1,1) \right) \coex{=} \{0\},
    \label{fs6}\\
  \overline{\theta}_{s}'(t)
  &\coex{=} s \left( \sign t \right) \abs{t}^{s-1}\qquad\forall~t\in \RR\setminus\{0\},
\label{fs6b}\\
    \overline{\theta}_{s}''(t)
  &\coex{=} s(s-1)\abs{t}^{s-2}\qquad\forall~t\in \RR\setminus\{0\},
    \label{fs6c}
    \intertext{By definition,}
  F_{s}(t)
  &\coex{=}\theta\overline{\theta}_{s}(t) \qquad\forall~t\in\RR,
    \label{fs6z}\\
  F_{s}(t)
  &\coex{=}
    \frac{3}{8} \abs{t}^{s+\frac{1}{2}} \left(\abs{t}^{2}-\frac{10}{3}\abs{t}+5 \right)\qquad
    \text{ if } \abs{t}\le 1,
    \label{fs6zb}\\
  F_{s}'(t) &\coex{=}\theta'\overline{\theta}_{s}(t)+\theta\overline{\theta}_{s}'(t)\qquad\forall~t\in \RR,
     \label{fs7zb}\\
  \begin{split}
     F_{s}'(t)
  &\coex{=} \left[\frac{3}{8} \abs{t}^{s+\frac{1}{2}}
    \left(\abs{t}^{2}-\frac{10}{3}\abs{t} +5 \right) \right]'\\
  &\coex[\substack{\mathtt{D[.]},~
    \mathtt{Expand[.]},\\\mathtt{Collect[.,t]}}]{=}
  \frac{3}{8} \left( \sign t \right) \abs{t}^{s-\frac{1}{2}} \left[
    \left(\frac{5}{2} + s \right) t^2 - \left(5 + \frac{10s}{3}
    \right)\abs{t} + \frac{5}{2} + 5s  \right]\\
  &\coex{~} \qquad \quad \qquad \text{ if } \abs{t}\le 1,
  \end{split}
  \label{fs7zbb}\\
  F_{s}''(t)
  &\coex{=}
    \theta''\overline{\theta}_{s}(t)+2\theta'\overline{\theta}_{s}'(t)+\theta\overline{\theta}_{s}''(t)\qquad\forall~t\in
    \RR,
    \label{fs8z}\\
\begin{split}
  F_{s}''(t)
  &\coex{=}
    \left\{\frac{3}{8} \left( \sign t \right) \abs{t}^{s-\frac{1}{2}}
    \left[ \left(\frac{5}{2} + s \right) t^2 - \left(5 + \frac{10s}{3} \right)\abs{t} + \frac{5}{2} +
    5s  \right]  \right\}'\\
  &\coex[\substack{\mathtt{Expand},\\ \mathtt{Collect[.,t]}}]{=}
    \frac{3}{8}\abs{t}^{s-\frac{3}{2}} \left[\left(\frac{15}{4}+ 4 s +
  s^2 \right) t^2 - \left(\frac{5}{2} + \frac{20 s}{3} + \frac{10
  s^2}{3} \right) t - \frac{5}{4} + 5 s^2 \right]\\
  &\coex{~} \qquad \quad \qquad \text{ if } \abs{t}\le 1.
\end{split} \label{fs8zb}
\end{align}
\end{subequations}
We have
\begin{subequations}
  \label{eq:1}
\begin{align}
  \lim_{t\to 1^{-}}\theta(t)
  &\coex[\eqref{fs1}]{=} 1 \coex{=} \lim_{t\to 1^{+}}\theta(t),
  \label{fs9z}\\
  \lim_{t\to 1^{-}}\theta'(t)
  &\coex[\eqref{fs500}]{=} 0  \coex{=} \lim_{t\to 1^{+}}\theta'(t),
  \label{fs10z}\\
  \lim_{t\to 1^{-}}\theta''(t)
  &\coex[\eqref{fs50}]{=} 0  \coex{=} \lim_{t\to 1^{+}}\theta''(t).\label{fs11z}
\end{align}
\end{subequations}
It implies $\theta\in C^{2}(\RR)$. Thus, by \eqref{fs6z} and \eqref{eq:1},
% \eqref{fs9z}-\eqref{fs11z}
$F_s$ is in $C^{1}(\RR)$ and $F'_{s}$ is
in $L^{\infty}(\RR)$ and uniformly continuous on $\RR$ if $s\in
\left(\frac{1}{2},1 \right]$.

\item[$(ii)$] Let $s\in \left(\frac{1}{2},1 \right]$. We have
  \begin{subequations}
    \label{eq:2}
    \begin{align}
      G_{s}(t)
      &\coex{=} \left(F_{s}F_{s}' \right)(t)\\
      &\coex{=} \left( \sign t \right)
        \abs{t}^{2s} \left[s \abs{t}^{-1}\theta^{2}(t) + \left( \sign
        t \right) \theta\theta'(t) \right]\qquad\forall~t\in\RR,\\
      &\coex{=}
        \begin{cases} \displaystyle
\frac{9}{64} \left( \sign t \right) \abs{t}^{2s}
          \left(\abs{t}^{2}-\frac{10}{3}\abs{t} +5 \right)\times\\
          \qquad \displaystyle \times \left[ \left(\frac{5}{2} + s \right) t^2 - \left(5 + \frac{10s}{3} \right)\abs{t} + \frac{5}{2} + 5s  \right]
          &\text{ if } \abs{t} \le 1,\vspace{1em}\\
          s \left( \sign t \right) \abs{t}^{2s-1} & \text{ if } 1 <\abs{t},
        \end{cases}\\
      \begin{split}
        G_{s}'(t)
      &\coex{=} \left(F_{s}'
        \right)^{2}(t)+F_{s}F_{s}''(t)\qquad\forall~t\in\RR,\\
      &\coex{=} \abs{t}^{2s-2} \left[s^{2}+s(s-1) \right]
      \coex{=} s(2s-1)\abs{t}^{2s-2}~ \text{ if } 1 < \abs{t},
      \end{split}
      \label{fs12z}\\
      \begin{split}
        G_{s}'(t)
        &\coex{=}
          \left\{\frac{9}{64} \left( \sign t \right) \abs{t}^{2s}
          \left(\abs{t}^{2}-\frac{10}{3}\abs{t} +5 \right)
          \right. \times \\
        &\coex{~} \qquad \quad \times \left.\left[ \left(\frac{5}{2} +
          s \right) t^2 - \left(5 + \frac{10s}{3} \right)\abs{t} +
          \frac{5}{2} + 5s  \right]\right\}'\\
        &\coex[\substack{\mathtt{D[\cdot]},~\mathtt{Expand},\\
        \mathtt{Collect}}]{=}
        \frac{9}{64} \abs{t}^{2 s -1} \left[\left(10 + 9 s + 2 s^2
        \right) t^4 - \left(40 + \frac{140 s}{3} + \frac{40
        s^2}{3} \right)\abs{t}^3\right.\\
        &\coex{~} \qquad \quad
          + \left(\frac{190}{3} + \frac{950 s}{9}+ \frac{380 s^2}{9} \right)t^2\\
        &\coex{~} \qquad \quad
          -\left.
          \left(\frac{100}{3} + 100 s + \frac{200 s^2}{3} \right)\abs{t} +
          25 s + 50 s^2\right]\quad \text{ if } \abs{t}\le 1.
      \end{split}\label{fs12zb}
    \end{align}
  \end{subequations}
Since $0<2s-1\le 1$, by \eqref{eq:1} and \eqref{eq:2}, $G_{s}$ is in
$C^{1}(\RR)$ and $G'_{s}$ is  in $L^{\infty}(\RR)$ and uniformly
continuous on $\RR$.
\end{itemize}
\end{proof}
\begin{lemma}  There are  positive real numbers  $\delta\in \left(0,\frac{1}{3} \right)$ and $c_{0}$ such that
 \begin{equation} 
\abs{F'_{s}}^{2}\le c_{0} G'_{s}(t)\qquad \forall~s\in \left(\frac{2}{3}-\delta,1 \right],~ t\in \RR,
\label{sss2}
\end{equation}
\label{lemfs1b}
\end{lemma}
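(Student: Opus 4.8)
The plan is to pass from \eqref{sss2} to the explicit polynomial formulas for $F_s'$ and $G_s'$ recorded in Lemma~\ref{lemfs1} and its proof, and thereby reduce the claim to a polynomial positivity statement on a compact rectangle in the $(s,x)$-plane. Since $F_s$ is even, $F_s'$ is odd and $F_s''$ is even (by \eqref{fs7zb}, \eqref{fs8z}), the functions $\abs{F_s'}^2$ and $G_s'=(F_s')^2+F_sF_s''$ are both even in $t$, so it is enough to treat $t=x\ge0$. I will fix $\delta\in(0,\tfrac16)$, so that $s>\tfrac12$, hence $2s-1>0$, for every $s\in(\tfrac23-\delta,1]$, and then split into the ranges $x\ge1$ and $0\le x\le1$.

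For $x\ge1$ we have $F_s(x)=x^s$, whence $F_s'(x)=sx^{s-1}$ and, by \eqref{fs12z}, $G_s'(x)=s(2s-1)x^{2s-2}$; thus \eqref{sss2} reads $s^2\le c_0\,s(2s-1)$, i.e.\ $c_0\ge s/(2s-1)$, and since $s\mapsto s/(2s-1)$ is decreasing on $(\tfrac12,\infty)$ it suffices to ask $c_0\ge(\tfrac23-\delta)/(\tfrac13-2\delta)$. For $0\le x\le1$ I will use \eqref{fs7zbb} and \eqref{fs12zb} to write $(F_s'(x))^2=\tfrac{9}{64}x^{2s-1}\Psi_s(x)$ and $G_s'(x)=\tfrac{9}{64}x^{2s-1}\Phi_s(x)$, where $\Psi_s$ is the square of the quadratic appearing in \eqref{fs7zbb} and $\Phi_s$ is the quartic in $\abs{t}$ displayed in brackets in \eqref{fs12zb}. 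As $\tfrac{9}{64}x^{2s-1}>0$ on $(0,1]$, it suffices to prove $\Psi_s(x)\le c_0\,\Phi_s(x)$ for $x\in[0,1]$ (the value $x=0$ being covered by continuity).

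The crux is that $\Phi_s$ remains \emph{strictly positive} on $[0,1]$ for $s$ in a suitable neighbourhood of $\tfrac23$. First I will check, either by a \texttt{Mathematica} computation or by elementary calculus — the derivative of the quartic $\Phi_{2/3}$ is a cubic whose own derivative is a quadratic of negative discriminant, so $\Phi_{2/3}'$ is strictly increasing, has a single zero $x_\ast\in(0,1)$, and $\Phi_{2/3}$ attains its minimum over $[0,1]$ at $x_\ast\approx0.78$ — that $m:=\min_{x\in[0,1]}\Phi_{2/3}(x)>0$ (numerically $m\approx0.23$). Since $(s,x)\mapsto\Phi_s(x)$ is a polynomial, hence uniformly continuous on $[\tfrac12,1]\times[0,1]$, I may then shrink $\delta\in(0,\tfrac16)$ so that $\abs{\Phi_s(x)-\Phi_{2/3}(x)}<m/2$ for all $x\in[0,1]$ and $\abs{s-\tfrac23}\le\delta$; and one checks separately (again by \texttt{Mathematica}, or by the same monotonicity-of-the-derivative argument, which is easier here since the interior minimum only increases as $s$ grows past $\tfrac23$) that $\Phi_s(x)>0$ for all $(s,x)\in[\tfrac23,1]\times[0,1]$. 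Together these give a constant $c_1>0$ with $\Phi_s(x)\ge c_1$ on the rectangle $[\tfrac23-\delta,1]\times[0,1]$.

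Finally, $\Psi_s$ is continuous, hence bounded by some $c_2<\infty$ on that rectangle, so $\Psi_s(x)\le c_2\le(c_2/c_1)\,\Phi_s(x)$ there, and taking $c_0:=\max\{(\tfrac23-\delta)/(\tfrac13-2\delta),\ c_2/c_1\}$ establishes \eqref{sss2} for every $t\in\RR$ and every $s\in(\tfrac23-\delta,1]$. The only genuinely delicate step is the positivity $\Phi_s(x)>0$ on $[0,1]$: the quartic $\Phi_{2/3}$ does come very close to vanishing near $x\approx0.78$, and for slightly smaller $s$ it actually becomes negative there, so $\delta$ cannot be taken large — this, together with the requirement $2s-1>0$ from the range $x\ge1$, is what pins $\delta$ down to a small value; all the remaining steps are routine continuity and compactness.
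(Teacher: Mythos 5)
Your argument is correct and takes essentially the same route as the paper. Both proofs split into $\abs{t}\le1$ and $\abs{t}\ge1$, strip off the $\abs{t}^{2s-1}$ prefactor to reduce \eqref{sss2} to a comparison of the bracketed polynomials in \eqref{fs7zbb} and \eqref{fs12zb} on a compact rectangle in $(s,t)$, use monotonicity in $s$ to cover $s\ge\tfrac23$, and perturb below $s=\tfrac23$ from the positive minimum at $s=\tfrac23$. The only real difference is in how the uniform bound is extracted at the end. You invoke soft compactness to produce non-explicit constants $c_1,c_2,\delta,c_0$, and leave the underlying calculus claims (unique interior minimum of $\Phi_{2/3}$, monotonicity of $\Phi_s$ in $s$) as sketches or \texttt{Mathematica} checks; the paper instead establishes the quantitative bounds $0<k'(s)\le10^3$, $h(1,\tfrac23,t)>10^{-2}$, and $\partial_\alpha h\le10^4$ by completing squares, which yield the explicit choices $\delta=10^{-6}$, $\alpha_0=1-10^{-8}$, $c_0=\max\{10^8,2\}$. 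Note that the paper's auxiliary function $h(\alpha,s,t)$ satisfies $h(\alpha,s,t)=(\alpha-1)\Psi_s(t)+\Phi_s(t)$, so its perturbation-in-$\alpha$ step is precisely your ratio comparison $\Psi_s\le(c_2/c_1)\Phi_s$ with the identification $\alpha_0=1-c_1/c_2$; your presentation is slightly leaner, the paper's is fully self-contained and gives concrete constants.
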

\begin{proof}
  Let $(\alpha,s,t)\in (0,1]\times \left(\frac{1}{2},1 \right]\times [0,1]$.  Put
 \begin{equation}h(\alpha,s,t)\coloneqq \left( \frac{9}{64}t^{2s -1}
   \right)^{-1} \left[\alpha \left(F_{s}' \right)^{2}(t)+F_{s}F_{s}''(t) \right].
\label{hh}
\end{equation}
We prove this lemma in following steps.
\begin{description}[wide]
\item[Step 1.]
For all $t \in \RR$,
\begin{align}
\label{0z}
  \begin{split}
    &2t^4 - \frac{40}{3}t^3 + \frac{380}{9}t^2 - \frac{200}{3}t + 50\\
    &\coex[\mathtt{Expand}]{=}
      2\left(t^2 - \frac{10}{3}t+1 \right)^2 + 16\left(t- \frac{80}{3\cdot16} \right)^{2}
      -16\left(\frac{5}{3} \right)^{2} + 48>3.
  \end{split}
\end{align}
Fix $t$ in $[0,1]$. Put $k(s)\coloneqq h(1,s,t)$ for every $s$ in $\left(\frac{1}{2},1 \right]$. Then
\begin{align}
  \label{0zaz}
  \begin{split}
    k(s)
    &\coex[\eqref{hh}]{=}\left(  \frac{9}{64}t^{2s -1}
      \right)^{-1}G_{s}'(t)\\
    &\coex[\eqref{fs12zb}]{=} \left(10 + 9 s + 2 s^2 \right) t^4 -
      \left(40 + \frac{140 s}{3} + \frac{40 s^2}{3} \right)t^3 \\
    &\coex{~}\quad + \left(\frac{190}{3} + \frac{950 s}{9}+ \frac{380
      s^2}{9} \right)t^2\\
    &\coex{~} \quad - \left(\frac{100}{3} +   100 s + \frac{200
      s^2}{3} \right)t + 25 s + 50 s^2\\
    &\coex[\substack{\mathtt{Expand[.]},\\ \mathtt{Collect[.,s]}}]{=}
    \left(50 - \frac{200 t}{3} + \frac{380 t^2}{9}- \frac{40 t^3}{3}+
    2 t^4 \right)s^2 \\
    &\coex{~} \quad + \left(25 - 100 t + \frac{950 t^2}{9}- \frac{140
      t^3}{3}+9 t^4 \right)s\\
    &\coex{~} \quad-\frac{100 t}{3} + \frac{190 t^2}{3} - 40 t^3 + 10 t^4,
  \end{split}\\
  \label{0zazb}
  \begin{split}
    k'(s)
    &\coex[\mathtt{D[.]}]{=} 2 \left(2t^4 - \frac{40}{3}t^3 +
      \frac{380}{9}t^2 - \frac{200}{3}t + 50 \right)s\\
    &\coex{~} \quad + \left(9 t^4 - \frac{140}{3}t^3 +
      \frac{950}{9}t^2 - 100t+25 \right)\\
    &\coex[\eqref{0z},~ s>\frac{1}{2}]{\ge}
      \left(2t^4 - \frac{40}{3}t^3 + \frac{380}{9}t^2 - \frac{200}{3}t
      + 50 \right)\\
    &\coex{~} \quad+ \left(9 t^4 - \frac{140}{3}t^3 + \frac{950}{9}t^2
      - 100t+25 \right)\\
    &\coex[\substack{\mathtt{Expand[.]},\\\mathtt{Collect[.,t]}}]{=}
    11 t^{4}- 60 t^3 + \frac{1330}{9}t^2- \frac{500}{3}t  + 75\\
    &\coex[\substack{\mathtt{Expand[.]},\\\mathtt{Collect[.,t]}}]{=}
    11\left(t^{2}- \frac{30}{11}t + 2 \right)^2 +\frac{2174 }{99} \left(t -
    \frac{99\cdot 70}{3\cdot 2174} \right)^2 + \frac{6747}{1087}\\
    &\coex{>} 0.
  \end{split}
\end{align}
It implies
\[0< k'(s)\le 10^{3} \qquad\forall~s\in \left(\frac{1}{2},1 \right].
\]
 Thus, by the Mean Value Theorem, $k$ is increasing on $\left(\frac{1}{2},1 \right]$ and
 \begin{equation}
   0 < k(s)-k(s')\le 10^{3}(s-s')\qquad \text{ if }~\frac{1}{2}<s'<s\le 1.
\label{s}
\end{equation}
\item[Step 2.] Let $t\in [0,1]$, by direct computation, we obtain
\begin{align*}
  h\left(1,\frac{2}{3},t \right)
  &\coex{=} k\left(\frac{2}{3} \right)\\
  &\coex[\eqref{0zaz},~\mathtt{Expand}]{=}\frac{152}{9}t^4 -
    \frac{2080}{27}t^3  +  \frac{12350}{81}t^2- \frac{3500}{27}t +
    \frac{350}{9}\\
  &\coex[\mathtt{Expand}]{=}
    \frac{152}{9} \left(t^2 - \frac{1040}{456}t +\frac{5}{4} \right)^2 +
    \frac{3830}{171} \left(t - \frac{285}{383} \right)^2  -
    \frac{3830}{171} \left(\frac{285}{383} \right)^2 + \frac{225}{18}\\
  &\coex{>} 10^{-2}.
\end{align*}
 Thus 
\begin{equation}
h \left(1,\frac{2}{3},t \right) > 10^{-2}\qquad\forall~t\in [0,1].
\label{wwzb}
\end{equation}

\item[Step 3.] Fix $t\in [0,1].$ Let $\delta > 0$ to be chosen later.
From \eqref{s}, we have 
\begin{align*}
h \left( 1, \frac{2}{3}, t \right) - h \left( 1, \frac{2}{3}-\delta, t
  \right)
  \le 10^3 \delta,
\end{align*}
or 
\begin{align*}
  h \left( 1, \frac{2}{3}-\delta,t \right)
  &\coex{\ge} h \left( 1, \frac{2}{3}, t \right) - 10^3 \delta\\ &\coex[\eqref{wwzb}]{>} 10^{-2} - 10^3\delta
\end{align*}
From \eqref{0zazb}, $h \left( 1, \cdot, t \right)$ is increasing on
$\left( \frac{1}{2}, 1 \right].$ By choosing $\delta = 10^{-6}$, we have 
\begin{align}
 h(1,s,t)> h \left( 1, \frac{2}{3}-\delta,t \right) > 10^{-3}\qquad\forall~(s,t)\in \left(\frac{2}{3}-\delta,1 \right]\times[0,1].
 \label{h1}
\end{align}
\item[Step 4.] For any $t\in [-1,1]$ and $\alpha \in [0,1]$, we have
\begin{align*}
  \frac{\partial h(\alpha,s,t)}{\partial \alpha}
  &\coex[\eqref{hh}]{=}
    \left( \frac{9}{64}t^{2s -1} \right)^{-1} \left(F_{s}'
    \right)^{2}(t)\\
  &\coex[\eqref{fs7zbb}]{=} \left[ \left(\frac{5}{2} + s \right) t^2 -
    \left(5 + \frac{10s}{3} \right)\abs{t} + \frac{5}{2} + 5s
    \right]^{2}\\
  &\coex{\le} 10^4. 
\end{align*}
Choosing $\alpha_{0} =1-10^{-8}$, by \eqref{h1}  and the Mean Value Theorem, we get
\begin{align}
  \begin{split}
  h(\alpha_{0},s,t)&\coex{\ge} h(1,s,t) - 10^{-4}\\
  &\coex[\eqref{h1}]{>}10^{-4}\qquad\forall~(s,t)\in (\frac{2}{3}-\delta,
    1]\times [0,1].
  \end{split}
\label{alpha}
\end{align}

\item[Step 5.] Put $\beta=\frac{1}{1-\alpha_{0}}$. By computation, we
  have 
  \begin{align*}
    \beta G_{s}'(t)-\left(F_{s}'(t) \right)^{2}
    &\coex{=} \left((\beta-1)F'^{2}+ \beta FF'' \right)(t)\\
    &\coex{=} \beta \left(\alpha_{0} F'^{2}+ FF'' \right)(t)\\
    &\coex[\eqref{hh}]{=}
      \frac{9\beta}{64}t^{2s-1}h(\alpha_{0},s,t)\\
    &\coex[\eqref{alpha}]{>}\frac{9 t^{2s-1}}{10^{4}64(1-\alpha_{0})}
      \coex{\ge} 0
  \end{align*}
  or
  \[
  \left(F_{s}'(t) \right)^{2}\le \beta G_{s}'(t)\qquad\forall~(s,t)\in \left(\frac{2}{3}-\delta,1 \right]\times [0,1].\]
Since $\left( F_s' \right)^2$ and $G_s'$ are even functions, we obtain
\[
  \left(F_{s}'(t) \right)^{2}\le \beta G_{s}'(t)\qquad\forall~(s,t)\in \left(\frac{2}{3}-\delta,1 \right]\times [-1,1].\]
 On the other hand
\begin{align*}
  2G_s'(t)- \left(F_s'(t) \right)^{2}
  &\coex{=} \left(F_s'(t) \right)^{2}+2\left(F_sF_s'' \right)(t)\\
  &\coex{=} s^{2}t^{2s-2}+ s(s-1)t^{2s-2}\\
  &\coex{=} st^{2s-2}(2s-1)\\
  &\coex[s > \frac{1}{2}]{\ge} 0 \qquad \text{ if } \abs{t} \ge 1, s> \frac{1}{2}.
\end{align*}
Putting  $c_{0}\coloneqq \max \left\{\frac{1}{1-\alpha_{0}},2\right\}$, we get the lemma.
\end{description}
\end{proof}

\begin{lemma} Let  $\delta$ be in Lemma \ref{lemfs1b}.  There are positive real numbers  $k_{s}$  and $k_{0}$ such that
\begin{align}
  \abs{tF'_{s}(t)}
  &\le 5F_{s}(t)\qquad\forall~s\in\left(\frac{2}{3}-\delta,1 \right],~t\in\RR,
\label{sss1}\\
\abs{G_s(t)}
&\le k_{s}F_s^{2-\frac{1}{s}}(t)\qquad \text{if } \abs{t}\le 1,
 \label{sss3}\\
 \overline{F}(t)
 &\le F_{s}^{\frac{1}{s}}(t)\le \overline{F}(t)+k_{0}\qquad \forall~t\in \RR.
 \label{sss4}
\end{align}
\label{lemfs1c}
\end{lemma}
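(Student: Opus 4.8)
The plan is to prove the three inequalities one at a time, in the order \eqref{sss1}, \eqref{sss3}, \eqref{sss4}, and in each case to split according to whether $\abs{t}\le 1$ or $\abs{t}>1$. On $\{\abs{t}>1\}$ there is nothing to do: by Definition~\ref{defs1} we have $\theta\equiv1$ there, so $F_{s}(t)=\abs{t}^{s}$ and $F_{s}'(t)=s(\sign t)\abs{t}^{s-1}$, whence $\abs{tF_{s}'(t)}=sF_{s}(t)\le 5F_{s}(t)$ (the content of \eqref{sss1} for such $t$), while $F_{s}^{1/s}(t)=\abs{t}=\overline{F}(t)$ by \eqref{fs1z} (which settles both halves of \eqref{sss4} for such $t$, for any $k_{0}\ge0$). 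So all the work is on $\abs{t}\le1$.

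For \eqref{sss1} on $0<\abs{t}\le1$ I would use the explicit formulas \eqref{fs6zb} and \eqref{fs7zbb}. Writing $x\coloneqq\abs{t}$, both $F_{s}(t)$ and $tF_{s}'(t)$ carry the common factor $\tfrac38\abs{t}^{s+1/2}$ (using $t\,\sign t=\abs{t}$), so
\[
  \frac{tF_{s}'(t)}{F_{s}(t)}=\frac{p_{s}(x)}{q(x)},\qquad p_{s}(x)\coloneqq\Bigl(\tfrac52+s\Bigr)x^{2}-\Bigl(5+\tfrac{10s}{3}\Bigr)x+\tfrac52+5s,\qquad q(x)\coloneqq x^{2}-\tfrac{10}{3}x+5.
\]
Here $q>0$ on $\RR$ (its discriminant is negative). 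Both $p_{s}$ and $5q-p_{s}$ are upward parabolas, their leading coefficients being $\tfrac52+s>0$ and $\tfrac52-s\ge\tfrac32>0$ for $s\le1$, and a one-line computation shows their vertices lie strictly to the right of $x=1$ for every $s\in(0,\tfrac52)$; hence both are decreasing on $[0,1]$ and attain their minima over $[0,1]$ at $x=1$, where $p_{s}(1)=\tfrac{8s}{3}>0$ and $(5q-p_{s})(1)=\tfrac{8(5-s)}{3}>0$. Therefore $0\le p_{s}(x)\le 5q(x)$ on $[0,1]$, which gives $\abs{tF_{s}'(t)}\le 5F_{s}(t)$; the case $t=0$ is trivial since both sides vanish. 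This proves \eqref{sss1} with the stated constant $5$.

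The remaining two inequalities both rest on the bound $0\le\theta\le1$ on $[-1,1]$, which I extract from \eqref{fs1} and \eqref{fs500}: $\theta(0)=0$, $\theta(\pm1)=1$, and $\theta'(t)=\tfrac38(\sign t)\abs{t}^{-1/2}\cdot\tfrac52(\abs{t}-1)^{2}\ge0$ for $0<t\le1$, so $\theta$ is nondecreasing on $[0,1]$ and, by evenness, $\theta\in[0,1]$ on $[-1,1]$. Consequently, for $\abs{t}\le1$ one has $F_{s}^{1/s}(t)=\theta(t)^{1/s}\abs{t}\le\abs{t}\le1$. For \eqref{sss4}, when $\abs{t}\le1$ this reads $\overline{F}(t)=0\le F_{s}^{1/s}(t)\le1$, so $k_{0}=1$ works (the range $\abs{t}>1$ was handled above). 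For \eqref{sss3} on $0<\abs{t}\le1$ I combine \eqref{sss1} in the form $\abs{F_{s}'(t)}\le 5F_{s}(t)/\abs{t}$ with $F_{s}(t)^{1/s}\le\abs{t}$ and with the observation that the exponent $2-\tfrac1s$ is positive (since $s>\tfrac12$): because $F_{s}(t)>0$ for $t\ne0$,
\[
  \abs{G_{s}(t)}=F_{s}(t)\abs{F_{s}'(t)}\le\frac{5F_{s}(t)^{2}}{\abs{t}}\le\frac{5F_{s}(t)^{2}}{F_{s}(t)^{1/s}}=5F_{s}(t)^{2-1/s};
\]
at $t=0$ both sides vanish, so \eqref{sss3} holds with $k_{s}=5$.

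I do not expect a genuine obstacle: once the formulas of Lemma~\ref{lemfs1} are in hand the statement is elementary. The only mildly delicate point is the sign analysis of $p_{s}$ and $5q-p_{s}$ on $[0,1]$ used for \eqref{sss1}, but since the vertices of both parabolas sit safely to the right of $x=1$ for all admissible $s$, that analysis collapses to evaluating at the endpoints $x=0$ and $x=1$. Alternatively one could simply invoke the \texttt{Mathematica} computations, in the same spirit as the preceding lemmas.
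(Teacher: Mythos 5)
Your proof is correct, and on two of the three inequalities it takes a somewhat different route from the paper's. For \eqref{sss1} on $\abs{t}\le 1$ the paper, like you, starts from \eqref{fs6zb} and \eqref{fs7zbb}, but it then bounds the bracket in $5F_s-\abs{tF_s'}$ term by term (dropping the non-negative $t^2$ term, replacing $\abs{t}$ by $1$) to get the crude lower bound $\tfrac{35}{2}-\tfrac{35}{3}>0$; you instead write the ratio $tF_s'(t)/F_s(t)=p_s(x)/q(x)$ and do a vertex analysis showing $0\le p_s\le 5q$ on $[0,1]$. Both arguments work, but your version has the small advantage of making explicit the sign check $p_s\ge0$ (equivalently $tF_s'(t)\ge0$ on $[-1,1]$), which the paper uses implicitly when it identifies $\abs{tF_s'}$ with the bracketed expression but does not verify. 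For \eqref{sss3} the difference is more substantial: the paper factors $\abs{G_s}=F_s^{2-1/s}\,\abs{\bar\theta_s'\theta^{1/s}\bar\theta_s^{1/s-1}+\bar\theta_s^{1/s}\theta'\theta^{1/s-1}}$ and appeals to boundedness of the second factor on $[-1,1]$ (a compactness-type argument, yielding an $s$-dependent $k_s$), whereas you bootstrap from \eqref{sss1} together with $\theta\in[0,1]$ on $[-1,1]$, giving $\abs{G_s}=F_s\abs{F_s'}\le 5F_s^2/\abs{t}\le 5F_s^{2-1/s}$ and hence the uniform constant $k_s=5$. Your argument is shorter, avoids the delicate examination of $\theta'$ near $t=0$, and is strictly sharper. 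For \eqref{sss4} the arguments are essentially the same --- direct reading of the formula --- though your observation $\theta\le1$ gives $k_0=1$ rather than the paper's $k_0=(7/2)^2$; either value is acceptable.
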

\begin{proof}
  By computation, we obtain 
  \begin{itemize}[wide]
  \item If $\abs{t} \le 1$:
  \begin{align*}
    &5F_{s}(t)-\abs{tF_{s}'(t)}\\
    &\coex[\eqref{fs7zbb}]{=}
      \frac{3}{8} \abs{t}^{s+\frac{1}{2}}
      \left[5\left(\abs{t}^{2}-\frac{10}{3}\abs{t} +5
      \right)-\left(\left(\frac{5}{2} + s \right) t^2 - \left(5 +
      \frac{10s}{3} \right)\abs{t} + \frac{5}{2} + 5s  \right)
      \right]\\
    &\coex[\substack{\mathtt{Expand},\\\mathtt{Collect[.,t]}}]{=}
    \frac{3}{8} \abs{t}^{s+\frac{1}{2}} \left(\frac{5-2s}{2}
    t^{2}-\frac{35-10s}{3}\abs{t}+\frac{45-10s}{2} \right)\\
    &\coex[\substack{0<s\le 1,\\  \abs{t}\le 1}]{\ge}
    \frac{3}{8} \abs{t}^{s+\frac{1}{2}}\left(
    -\frac{35}{3}+\frac{35}{2} \right) \\
    &\coex{\ge} 0.
  \end{align*} 
\item If $\abs{t} \ge 1$: 
\begin{align*}
5F_{s}(t)-\abs{tF_{s}'(t)}=(5-s)\abs{t}^{s}\coex[0<s\le 1]{\ge} 0.
\end{align*}
\end{itemize}
Thus we get \eqref{sss1}.

By definition \eqref{fs4},
\begin{align*}
  \abs{G_{s}(t)}
  &=\abs{\theta\overline{\theta}_{s}(t)
    \left[\overline{\theta}_{s}'\theta+\overline{\theta}_{s}\theta'
    \right](t)}\\
  &= \abs{\theta\overline{\theta}_{s}}^{2-\frac{1}{s}}(t)
    \abs{\overline{\theta}_{s}'\theta^{\frac{1}{s}}\overline{\theta}_{s}^{\frac{1}{s}-1}+\overline{\theta}_{s}^{\frac{1}{s}}\theta'\theta^{\frac{1}{s}-1}}(t)\\
  &= \abs{F_s}^{2-\frac{1}{s}}(t)
    \abs{\overline{\theta}_{s}'\theta^{\frac{1}{s}}\overline{\theta}_{s}^{\frac{1}{s}-1}+\overline{\theta}_{s}^{\frac{1}{s}}\theta'\theta^{\frac{1}{s}-1}}(t)
\end{align*}
 Since  $\theta$, $\theta'$, $\overline{\theta}_{s}$ and
  $\overline{\theta}_{s}'\theta^{\frac{1}{s}}\overline{\theta}_{s}^{\frac{1}{s}-1}
  = s \left( \sign t \right) \theta^{\frac{1}{s}}$ are continuous on $[-1,1]$, and $\frac{1}{s}-1\ge 0$, there is a positive real number $k_{s}$ such that
\begin{align*}
  \abs{G_s(t)}
  &\le k_s \abs{F_s}^{2-\frac{1}{s}}(t)
    \coex[\eqref{sss1}]{=} k_s F_s^{2-\frac{1}{s}}(t) \qquad \text{ if } \abs{t} \le 1.
\end{align*}
Thus we get \eqref{sss3}.

Note that
\begin{align*}
  \overline{F}(t)
  &= F_{s}^{\frac{1}{s}}(t)\qquad \text{ if } \abs{t}\ge 1,\\
  \overline{F}(t)
  &=0\le F_{s}^{\frac{1}{s}}(t)\qquad \text{ if } \abs{t}\le 1,\\
  F_{s}^{\frac{1}{s}} (t)
  &=  \abs{t}^{1+\frac{1}{2s}} \left[\frac{3}{8}
    \left(\abs{t}^{2}-\frac{10}{3}\abs{t} +5 \right)
    \right]^{\frac{1}{s}}\\
  &\le \left[\frac{3}{8} \left(1+\frac{10}{3} +5 \right)
    \right]^{\frac{1}{s}}\\
  &\le{k_{0}}\qquad \text{ if } \abs{t}\le 1,~ s>\frac{1}{2},
\end{align*}
where $k_{0}= \left[\frac{3}{8} \left(1+\frac{10}{3} +5 \right) \right]^{2}$. This implies \eqref{sss4}.
\end{proof}
\section{Boundedness for  solutions of parabolic equations}\label{bo}

We use techniques in \cite{ducGlobalHolderContinuity,moserHarnackInequalityParabolic1964} to study  the boundedness of parabolic equations. We need a following lemma.
\begin{lemma}
  Let $A$ be admissible with respect to $\Omega$. Assume \eqref{eq:ccc}
  holds.
  \begin{itemize}
  \item[$(i)$] Suppose $u\in V_{B,A,T}(Q)$.  Let $F\coloneqq F_{s,l}$
    or $F_{s}$, and $G\coloneqq G_{s,l}$ or $G_{s}$ defined in Definitions
    \ref{def61} or \ref{defs1}, respectively. Let $w\coloneqq u^{+}$,
    $v\coloneqq F\circ u^{+}$, $\varphi\coloneqq G\circ u^{+}$ and
    $\eta\in C^{1} \left(\RR^{N+1} \right)$ such that
    $\eta(Q)\subset [0,1]$. We have
  \begin{align}
    \begin{split}
      &\frac{1}{2}c_{0}\frac{\partial (\eta^{2}v^{2})}{\partial
      t}+\sum_{i,j=1}^{N}\frac{\partial (\eta v)}{\partial
        x_{i}}\frac{\partial(\eta v)}{\partial x_{j}}b_{ij}\\
      &\coex{\le} c_{0} \left[
    \eta^{2}\varphi\frac{\partial u}{\partial
      t}+\sum_{i,j=1}^{N}\frac{\partial (\eta^{2}\varphi)}{\partial
      x_{i}}\frac{\partial u}{\partial x_{j}}b_{ij} \right]\\
    &\coex{~} \qquad + c_{0}\eta v^{2}\abs{\nabla \eta}+2c_{0} v
      \abs{\nabla_{x}(\eta v)} \abs{\nabla_x \eta}\overline{b}+ 3c_{0}v^{2}\abs{\nabla_x \eta}^{2}\overline{b},
    \end{split}
    \label{p1031}
  \end{align}
where  $c_{0}$ is defined in Lemma \ref{lemfs1b}.%\eqref{sss2}.

\item[$(ii)$]  % Let $0 < a\in L^{\frac{2r}{r-2}}(Q)$, and $a_{0}$, $a_{1}$, $a_{2}$ be
  % non-negative measurable functions on $Q$ such that 
  % \begin{align*}
  % \sum_{i=1}^{N}b^{-1}a_0^{2}+\abs{a_{1}}+ \abs{a_{2}}\le a.
  % \end{align*}
  If  $u\in W_{B,A,T}(Q)$ satisfying
  \begin{align}
    \begin{split}
      &\int_{Q}\left( \frac{\partial u}{\partial t}\psi +
        \sum_{i,j=1}^{N}\frac{\partial u}{\partial x_{i}}\frac{\partial
        \psi}{\partial x_{j}}b_{ij}  \right)dz\\
      &\coex{\le}\int_{Q} \left( a_{0} \abs{\nabla_{x} u }+a_{1}\abs{u}  +a_{2}
        \right) \abs{\psi} dz \qquad\forall~\psi\in W_{B,A,T}(Q),
    \end{split}
 \label{le460}
 \end{align}
then
\begin{align}
  \label{p103}
  \begin{split}
    \left\{\int_{Q}\eta^{r}v^{r}dz\right\}^{\frac{2}{r}}
    &\le C(r,Q,A)^2\int_{Q}\sum_{i=1}^{N}\abs{\frac{\partial (\eta v)}{\partial x_{i}}}^{2}bdz\\
    &\le C(r,Q,A)^2\int_{Q}\sum_{i,j=1}^{N}\frac{\partial (\eta v)}{\partial x_{i}}\frac{\partial(\eta v)}{\partial x_{j}}b_{ij}dz\\
    &\le C(r,Q,A)^2 \left[ c_0\int_{Q}\left(  a_{0}\abs{\nabla_{x} u}+a_{1}\abs{u}+a_{2} \right)\eta^{2}\varphi dz \right. \\
    &\qquad+  c_{0}\int_{Q}\eta v^{2}\abs{\nabla \eta} dz +  2 \int_{Q}v
      \abs{\nabla_{x}(\eta v)} \abs{\nabla_x \eta}\overline{b}dz\\
    &\qquad+  \left.3c_{0}\int_{Q}v^{2}\abs{\nabla_x \eta}^{2}\overline{b}dz \right].
  \end{split}
\end{align}
Here, $C(r,Q,A)$ is the constant in Definition \ref{d41}.
\end{itemize}
\label{lem12}
\end{lemma}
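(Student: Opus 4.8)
The plan is to establish $(i)$ by a chain-rule computation combined with elementary algebra of the form $B$, and then to obtain $(ii)$ by integrating \eqref{p1031} over $Q$ and combining it with the embedding \eqref{cb} and the weak inequality \eqref{le460}. For $(i)$ I would set $w\coloneqq u^{+}$, so that $w\in V_{B,A,T}(Q)$ with $w\ge 0$ by Lemma \ref{w1bb}. Since $F,G\in C^{1}(\RR)$ with $F(0)=G(0)=0$ and $F',G'$ bounded and uniformly continuous on $\RR$ (Lemmas \ref{lem62} and \ref{lemfs1}), Lemmas \ref{2w1ba} and \ref{w1bb} give $v=F\circ w$ and $\varphi=G\circ w$ in $V_{B,A,T}(Q)$, together with the chain rules $\nabla v=F'(w)\nabla w$ and $\nabla\varphi=G'(w)\nabla w$ a.e., where $G'=(F')^{2}+FF''$ and $\nabla w=\chi_{\{u>0\}}\nabla u$ a.e.\ (Lemma \ref{5z} and \cite[Lemma 7.6]{gilbargEllipticPartialDifferential2001}). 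The whole computation hinges on the algebraic identity $\varphi=F(w)F'(w)=vF'(w)$, which yields, for each $y\in\{x_{1},\dots,x_{N},t\}$,
\[
  \eta F'(w)\,\partial_{y}w=\partial_{y}(\eta v)-v\,\partial_{y}\eta,\qquad \eta\varphi\,\partial_{y}w=v\bigl(\partial_{y}(\eta v)-v\,\partial_{y}\eta\bigr).
\]

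Substituting these into the expansions of $\vuong(\eta v)$ and of $\sum_{i,j}b_{ij}\,\partial_{x_{i}}(\eta^{2}\varphi)\,\partial_{x_{j}}u$, and using $b_{ij}=b_{ji}$, I expect the pointwise identity (with $\vuong\psi\coloneqq\sum_{i,j=1}^{N}b_{ij}\,\partial_{x_{i}}\psi\,\partial_{x_{j}}\psi$ as before)
\[
  \vuong(\eta v)=c_{0}\sum_{i,j=1}^{N}b_{ij}\,\partial_{x_{i}}(\eta^{2}\varphi)\,\partial_{x_{j}}u-\eta^{2}\bigl(c_{0}G'(w)-F'(w)^{2}\bigr)\vuong w-2(c_{0}-1)\,v\sum_{i,j=1}^{N}b_{ij}\,\partial_{x_{i}}(\eta v)\,\partial_{x_{j}}\eta+(2c_{0}-1)\,v^{2}\,\vuong\eta .
\]
Here $\vuong w\ge 0$ by \eqref{c1}, and $F'(w)^{2}\le c_{0}G'(w)$ by Lemma \ref{lemfs1b} when $F=F_{s}$ (when $F=F_{s,l}$ one instead uses $|F'_{s,l}|^{2}\le s^{2}G'_{s,l}$ from Lemma \ref{lem62}, adjusting the constant), so the second term is nonpositive and may be dropped; the last two terms I would bound by the Cauchy--Schwarz inequality for the positive definite form $B$, $|\sum_{i,j}b_{ij}p_{i}q_{j}|\le(\sum_{i,j}b_{ij}p_{i}p_{j})^{1/2}(\sum_{i,j}b_{ij}q_{i}q_{j})^{1/2}\le\overline b\,|p|\,|q|$ (the last step by \eqref{c1}), together with $v\ge 0$, to produce $2c_{0}v|\nabla_{x}(\eta v)||\nabla_{x}\eta|\overline b$ and $3c_{0}v^{2}|\nabla_{x}\eta|^{2}\overline b$. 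For the parabolic part, $\tfrac12 c_{0}\,\partial_{t}(\eta^{2}v^{2})=c_{0}\eta^{2}v\,\partial_{t}v+c_{0}\eta v^{2}\,\partial_{t}\eta=c_{0}\eta^{2}\varphi\,\partial_{t}u+c_{0}\eta v^{2}\,\partial_{t}\eta$, and $\eta v^{2}\,\partial_{t}\eta\le\eta v^{2}|\nabla\eta|$ since $\eta\ge 0$ and $|\partial_{t}\eta|\le|\nabla\eta|$. Adding the spatial and parabolic estimates gives \eqref{p1031}.

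For $(ii)$ (with $u,v,\varphi,\eta$ as in $(i)$), after replacing $\eta$ by its product with a fixed function of $C^{1}_{c}(\RR^{N+1})$ equal to $1$ on $\overline Q$ — harmless on $Q$ since $Q$ is bounded — Lemma \ref{w1bb}$(iii)$ shows $\eta v,\eta^{2}\varphi\in W_{B,A,T}(Q)$. The first two inequalities of \eqref{p103} then follow from \eqref{cb} applied to $\eta v$, then \eqref{i3}, then \eqref{c1}, using $|\eta v|^{r}=\eta^{r}v^{r}$ on $Q$ (valid since $\eta\ge 0$ and $v=F\circ u^{+}\ge 0$). For the last inequality I would integrate \eqref{p1031} over $Q$. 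The parabolic term contributes $\tfrac12 c_{0}\int_{Q}\partial_{t}(\eta^{2}v^{2})\,dz=\tfrac12 c_{0}\int_{\Omega}[\eta^{2}v^{2}]_{t=0}^{t=T}\,dx$; approximating $v$ in the $V$-norm by functions of $C^{1}(Q,A)$ (Lemmas \ref{w1bb}$(ii)$ and \ref{2w1ba}$(iii)$), which vanish near $\overline\Omega\times\{0\}$, and passing to the limit in $L^{1}(Q)$, this equals $\tfrac12 c_{0}\int_{\Omega}\eta^{2}(x,T)v^{2}(x,T)\,dx\ge 0$, so it may be omitted from the left side. Finally, by \eqref{le460} with the test function $\psi=\eta^{2}\varphi\ge 0$, the first integral on the right of the integrated \eqref{p1031} is at most $c_{0}\int_{Q}(a_{0}|\nabla_{x}u|+a_{1}|u|+a_{2})\eta^{2}\varphi\,dz$, and the remaining integrals are carried over unchanged; this gives the last inequality of \eqref{p103}.

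I expect the main obstacle to be the bookkeeping in $(i)$: since $u$ is merely a Sobolev function, every chain- and product-rule manipulation must be justified, which is why one passes through $v=F\circ u^{+},\varphi=G\circ u^{+}\in V_{B,A,T}(Q)$ and relies on Lemmas \ref{5z}, \ref{2w1ba}, \ref{w1bb}; the structure of $F,G$ enters precisely twice — via $\varphi=vF'$, used to recast the time term through $\partial_{t}u$, and via $F'^{2}\le c_{0}G'$, used to absorb $\eta^{2}\vuong w$ — which is exactly the purpose of the constructions of Section \ref{aux}. In $(ii)$ the only delicate point is the vanishing of the boundary contribution at $t=0$, which rests on the fact that membership in $V_{B,A,T}(Q)$ encodes the homogeneous initial datum on $\Omega\times\{0\}$.
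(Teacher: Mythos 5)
Your proof of $(i)$ and $(ii)$ is in substance the same as the paper's: chain rules for $v=F\circ u^{+}$, $\varphi=G\circ u^{+}$; expand $\sum_{i,j}b_{ij}\,\partial_{x_{i}}(\eta^{2}\varphi)\,\partial_{x_{j}}u$; absorb $\eta^{2}G'(w)\vuong w$ against $\eta^{2}\vuong v$ via $F'^{2}\lesssim G'$; Cauchy--Schwarz for the form $B$ together with \eqref{c1} to bound the cross terms by $\overline b$; the time identity $\eta^{2}\varphi\,\partial_{t}u=\tfrac12\partial_{t}(\eta^{2}v^{2})-\eta v^{2}\partial_{t}\eta$; then integrate, drop $\int_{Q}\partial_{t}(\eta^{2}v^{2})\,dz\ge 0$ by approximating in $V_{B,A,T}(Q)$ with elements of $C^{1}(Q,A)$, and test \eqref{le460} with $\psi=\eta^{2}\varphi\ge 0$. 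Your reorganization of the spatial step into a single explicit pointwise identity (which I checked is algebraically correct) is a tidier presentation of the paper's chain of one-sided estimates, but it is not a different route; the first two inequalities of \eqref{p103}, and your treatment of the $C^{1}$ versus $C^{1}_{c}$ technicality for $\eta$, are also fine.

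There is, however, one step that would fail as you wrote it. In the parenthetical you propose, when $F=F_{s,l}$, to use $\abs{F'_{s,l}}^{2}\le s^{2}G'_{s,l}$ from \eqref{ss2} and ``adjust the constant.'' But the statement of the lemma fixes $c_{0}$ to be the universal constant from Lemma \ref{lemfs1b}; replacing $c_{0}$ by $s^{2}$ (or $\max\{c_{0},s^{2}\}$) would make \eqref{p1031} and \eqref{p103} carry an $s$-dependent constant, which would wreck the Moser iteration in Proposition \ref{pro46}, Case 1, where $s_{m}\to\infty$. The correct reason the fixed $c_{0}$ works for $F_{s,l}$ is the sharper bound established \emph{inside} the proof of Lemma \ref{lem62}, namely \eqref{f11bb}: $\abs{F'_{s,l}(t)}^{2}\le G'_{s,l}(t)$ for all $t$ (this is what $F_{s,l}\ge 0$, $F''_{s,l}\ge 0$ and $G'_{s,l}=(F'_{s,l})^{2}+F_{s,l}F''_{s,l}$ give, uniformly in $s$). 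Since $c_{0}\ge 2\ge 1$, this yields $\abs{F'_{s,l}}^{2}\le G'_{s,l}\le c_{0}G'_{s,l}$, so your term $-\eta^{2}\bigl(c_{0}G'-F'^{2}\bigr)\vuong w$ is nonpositive with no $s$-dependence; that is the step to invoke, not \eqref{ss2}.
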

\begin{proof}
  \begin{itemize}[wide]
  \item[$(i)$] Since $F(0)=G(0)=0$, by Lemma \ref{2w1ba} and Lemma \ref{w1bb}, $w$,  $v$ and $\varphi$ are in $V_{B,A,T}(Q)$.  Let $j\in \{1,\cdots,N+1\}$. We have 
\begin{align}
  \frac{\partial w}{\partial x_{j}}(x)
  &\coex{=} \begin{cases}
\displaystyle\frac{\partial u}{\partial x_{j}}(x) \qquad&{\forall~x \in Q^{+}=\{z\in Q :u(z)>0\},}\\
0&{\forall~x \in Q\setminus Q^{+}}.
\end{cases}
    \label{le462aazz20}\\
  \label{le462aazz2}
  \begin{split}
    \frac{\partial v}{\partial x_{j}}(x)
    &\coex{=} \begin{cases}
      \displaystyle F'(w(x))\frac{\partial u}{\partial x_{j}}(x) \qquad&{\forall~x \in Q^{+},}\\
      0&{\forall~x \in Q\setminus Q^{+}},
\end{cases}
    \\
    &\coex[F'(0)=0]{=} F'(w(x))\frac{\partial w}{\partial
      x_{j}}(x)\qquad\forall~x \in Q.
  \end{split}\\
  \label{le462aazz3}
  \begin{split}
    \frac{\partial \varphi}{\partial x_{j}}(x)
  &\coex{=} \begin{cases}
    \displaystyle G'(w(x))\frac{\partial u}{\partial x_{j}}(x)  \qquad&{\forall~x \in Q^{+},}\\
    0&{\forall~x \in Q\setminus Q^{+},}
       \end{cases}
        \\
&\coex[G_{l}'(0)=0]{=} G_{l}'(w(x))\frac{\partial w}{\partial
  x_{j}}(x)\qquad\forall~x \in Q.
  \end{split}
\end{align}
By results in
\cite[pp.150-151]{gilbargEllipticPartialDifferential2001} and direct
computation, we obtain
\begin{align}
  \eta^{2}\varphi\frac{\partial u}{\partial t}
  \coex[G=FF']{=} \eta^{2}F(u^{+})F'(u^{+})\frac{\partial u}{\partial t}=\frac{1}{2}\eta^{2}\frac{\partial v^{2}}{\partial t}=\frac{1}{2}\frac{\partial (\eta^{2}v^{2})}{\partial t}-\eta v^{2}\frac{\partial\eta}{\partial t},
\label{pa1}
\end{align}
and
\begin{align*}
  &\sum_{i,j=1}^{N}\frac{\partial \left(\eta^{2}\varphi \right)}{\partial
    x_{i}}\frac{\partial u}{\partial x_{j}}b_{ij}\\
  &\coex[\varphi=G \left(u^{+} \right)]{=}
    \eta^{2}G'(u^{+})\sum_{i,j=1}^{N}\frac{\partial u^{+}}{\partial
    x_{i}}\frac{\partial u}{\partial x_{j}}b_{ij}+ 2\eta
    G(u^{+})\sum_{i,j=1}^{N}\frac{\partial \eta}{\partial
    x_{i}}\frac{\partial u}{\partial x_{j}}b_{ij}\\
  &\coex[\eqref{ss2} \text{ and }\eqref{sss2}]{\ge}
    c_{0}^{-1}\eta^{2}\sum_{i,j=1}^{N} \left(F'(u^{+}) \right)^{2}\frac{\partial
    u^{+}}{\partial x_{i}}\frac{\partial u^{+}}{\partial x_{j}}b_{ij}+
    2\eta F(u^{+})F'(u^{+})\sum_{i,j=1}^{N}\frac{\partial
    \eta}{\partial x_{i}}\frac{\partial u}{\partial x_{j}}b_{ij}\\
  &\coex{=} c_{0}^{-1}\eta^{2}\sum_{i,j=1}^{N}\frac{\partial
    F(u^{+})}{\partial x_{i}}\frac{\partial F(u^{+})}{\partial
    x_{j}}b_{ij}+ 2\eta F(u^{+})\sum_{i,j=1}^{N}\frac{\partial
    \eta}{\partial x_{i}}\frac{\partial F(u^{+})}{\partial
    x_{j}}b_{ij} \\
  &\coex{=} c_{0}^{-1}\eta^{2}\sum_{i,j=1}^{N}\frac{\partial
    v}{\partial x_{i}}\frac{\partial v}{\partial x_{j}}b_{ij}+ 2\eta
    v\sum_{i,j=1}^{N}\frac{\partial \eta}{\partial
    x_{i}}\frac{\partial v}{\partial x_{j}}b_{ij} \\
  &\coex[b_{ij}=b_{ji}]{=}
    c_{0}^{-1}
\sum_{i,j=1}^{N}\frac{\partial (\eta v)}{\partial x_{i}}\frac{\partial(\eta v)}{\partial x_{j}}b_{ij}-2c_{0}^{-1} v\sum_{i,j=1}^{N}\frac{\partial  (\eta v)}{\partial x_{i}}\frac{\partial\eta }{\partial x_{j}}b_{ij} + c_{0}^{-1}
v^{2}\sum_{i,j=1}^{N}\frac{\partial  \eta}{\partial
    x_{i}}\frac{\partial\eta }{\partial x_{j}}b_{ij}\\
  &\coex{~} \qquad
    + 2v\sum_{i,j=1}^{N}\frac{\partial \eta}{\partial x_{i}}\frac{\partial (\eta v)}{\partial x_{j}}b_{ij} - 2v^{2}\sum_{i,j=1}^{N}\frac{\partial \eta}{\partial x_{i}}\frac{\partial \eta }{\partial x_{j}}b_{ij}.
\end{align*}
The latter implies
\begin{align}
  \begin{split}
    \sum_{i,j=1}^{N}\frac{\partial (\eta v)}{\partial
  x_{i}}\frac{\partial(\eta v)}{\partial x_{j}}b_{ij}
&\le
c_{0}\sum_{i,j=1}^{N}\frac{\partial (\eta^{2}\varphi)}{\partial
  x_{i}}\frac{\partial u}{\partial x_{j}}b_{ij} -(2c_{0}-2)
v\sum_{i,j=1}^{N}\frac{\partial  (\eta v)}{\partial
  x_{i}}\frac{\partial\eta }{\partial x_{j}}b_{ij}\\
&\qquad \quad+ (2c_{0}-1)v^{2}\sum_{i,j=1}^{N}\frac{\partial  \eta}{\partial x_{i}}\frac{\partial\eta }{\partial x_{j}}b_{ij}
  \end{split}
\label{pa1b}
\end{align}

Combining \eqref{pa1} and \eqref{pa1b}, we get 
\begin{align*}
\sum_{i,j=1}^{N}\frac{\partial (\eta v)}{\partial
    x_{i}}\frac{\partial(\eta v)}{\partial x_{j}}b_{ij}
  &\coex[\eqref{pa1} \text{ and } \eqref{pa1b}]{\le}
    c_{0} \left[\eta^{2}\varphi\frac{\partial u}{\partial
    t}+\sum_{i,j=1}^{N}\frac{\partial \left(\eta^{2}\varphi \right)}{\partial
    x_{i}}\frac{\partial u}{\partial x_{j}}b_{ij} \right] \\
  &\coex{~}\qquad -
    c_{0} \left[\frac{1}{2}\frac{\partial \left(\eta^{2}v^{2} \right)}{\partial t}-\eta
    v^{2}\frac{\partial\eta}{\partial t} \right]\\
  &\coex{~} \qquad -2(c_{0}-1
) v\sum_{i,j=1}^{N}\frac{\partial  (\eta v)}{\partial
    x_{i}}\frac{\partial\eta }{\partial x_{j}}b_{ij}\\
  &\coex{~} \qquad +
    (2c_{0}-1)v^{2}\sum_{i,j=1}^{N}\frac{\partial \eta}{\partial
    x_{i}}\frac{\partial \eta }{\partial x_{j}}b_{ij}\\
  &\coex[\eqref{c1},~ \text{Schwarz}]{\le}
    c_{0} \left[ \eta^{2}\varphi\frac{\partial u}{\partial
    t}+\sum_{i,j=1}^{N}\frac{\partial \left(\eta^{2}\varphi \right)}{\partial
    x_{i}}\frac{\partial u}{\partial x_{j}}b_{ij} \right]\\
  &\coex{~}\qquad -
    \frac{1}{2}c_{0}\frac{\partial \left(\eta^{2}v^{2} \right)}{\partial t}+
    c_{0}\eta v^{2}\abs{\nabla \eta}\\
  &\coex{~} \qquad +2c_{0} v \abs{\nabla_x (\eta v)} \abs{\nabla_x \eta}\overline{b}+ 3c_{0}v^{2}\abs{\nabla_x \eta}^{2}\overline{b},
\end{align*}
and we obtain $(i)$.
\item[$(ii)$] Let $\{u_{n}\}$ be a sequence in $C^{1}(Q,A)$ that
  converges to $u$ in $V_{B,A,T}(Q)$. Define 
  \begin{align*}
  v \coloneqq F \circ u^+ \qquad \text{ and } \qquad v_n \coloneqq F
    \circ u_n^+.
  \end{align*} We have
  \begin{align}
    \begin{split}
  \int_{Q}\frac{\partial (\eta^{2}v_{n}^{2})}{\partial t}dz
  &\coex[\text{Fubini}]{=}
  \int_{\Omega} \left[\int_{0}^{T}\frac{\partial
  (\eta^{2}v_{n}^{2})}{\partial t}dt \right]dx\\
  &\coex[v_{n} \left(\Omega\times\{0\} \right) =\{0\}]{=}
      \int_{\Omega}(\eta^{2}v_{n}^{2})(x,T)dx\ge 0.
    \end{split}
 \label{p102z1}
  \end{align}
  
By definitions, Lemmas \ref{4l2b}, \ref{2w1ba}, and \ref{w1bb}, $\left\{\eta^{2}G(u_{n}^{+})\right\}$, $\displaystyle\left\{\frac{\partial u_{n}}{\partial t}\right\}$ and $\left\{ F(u_{n}^{+})\right\}$ converge to $\eta^{2}\varphi$, $\displaystyle \frac{\partial u}{\partial t}$ and $ v$ in $L^{2}(Q)$. Thus, by \eqref{pa1}, $\displaystyle \left\{\frac{\partial(\eta^{2}F^{2}(u_{n}^{+}))}{\partial t}\right\}$ converges to $\displaystyle \left\{\frac{\partial(\eta^{2}v^{2})}{\partial t}\right\}$ in $L^{1}(Q)$. Thus by \eqref{p102z1}, we obtain
 \begin{equation}
\int_{Q}\frac{\partial (\eta^{2}v^{2})}{\partial t}dz\ge 0.
 \label{p102z2}
 \end{equation}
 Thus we obtain 
 \begin{align*}
   &C(r,Q,A)^{-2}\left\{\int_{Q}\eta^{r}v^{r}dz\right\}^{\frac{2}{r}}\\
   &\coex[\eqref{cb}]{\le}
     \int_{Q}\sum_{i=1}^{N}\abs{\frac{\partial (\eta v)}{\partial
     x_{i}}}^{2}bdz\\
   &\coex[\eqref{c1}]{\le}
     \int_{Q}\sum_{i,j=1}^{N}\frac{\partial (\eta v)}{\partial
     x_{i}}\frac{\partial(\eta v)}{\partial x_{j}}b_{ij}dz\\
   &\coex[\eqref{p1031}]{\le}
     c_{0}\int_{Q} \left( \eta^{2}\varphi\frac{\partial u}{\partial t}
     +\sum_{i,j=1}^{N}\frac{\partial (\eta^{2}\varphi)}{\partial
     x_{i}}\frac{\partial u}{\partial x_{j}}b_{ij} \right)dz -\frac{c_{0}}{2}
     \int_{Q}\frac{\partial \left(\eta^{2}v^{2} \right)}{\partial t}dz\\
   &\coex{~} \qquad + c_{0}\int_{Q}\eta v^{2}\abs{\nabla \eta} dz
     +2\int_{Q}v \abs{\nabla_{x}(\eta v)} \abs{\nabla_x \eta}\overline{b}dz
     +3c_{0}\int_{Q}v^{2}\abs{\nabla_x \eta}^{2}\overline{b}dz\\
   &\coex[\eqref{le460} \text{ and }\eqref{p102z2}]{\le}
     c_{0}\int_{Q} \left( a_{0}|\nabla_{x} u|+a_{1}|u|
     +a_{2} \right)\eta^{2}\varphi dz\\
    &\coex{~} \qquad + c_{0}\int_{Q}\eta v^{2}\abs{\nabla \eta} dz
     +2\int_{Q}v \abs{\nabla_{x}(\eta v)} \abs{\nabla_x \eta}\overline{b}dz
     +3c_{0}\int_{Q}v^{2}\abs{\nabla_x \eta}^{2}\overline{b}dz.
 \end{align*}
and we obtain $(ii)$.
\end{itemize}
\end{proof}
 We have following global boundedness of solutions  as follows. 
 \begin{proposition}[Global boundedness]
Let $A$ be  admissible with respect to $\Omega$ and assume
\eqref{eq:ccc} holds. Let $\delta$ be defined as in Lemma
\ref{lemfs1b}, and $\alpha \in [1,\infty)$ such that 
\begin{equation}
\frac{\alpha}{\overline{r}}> \frac{2}{3}-\delta.
\label{cc9}
\end{equation}
Let $0 < a\in L^{\frac{\overline{r}}{\overline{r}-2}}(Q)$, and $a_{0}$, $a_{1}$, $a_{2}$ be
non-negative measurable functions on $Q$ such that
\begin{equation}
b^{-1} a_{0}^{2}+a_{1}+ a_{2}\le a,
\label{cc8}
 \end{equation}
Suppose $u\in V_{B,A,T}(Q)\cap L^{\alpha}(Q)$ and satisfies  \eqref{le460}.

Then there exists a real number
$c\left(r,\alpha,Q,\norm{u}_{L^{\alpha}(Q)} \right) > 0$ such that
\begin{equation}
  \norm{u}_{L^{\infty}(Q)}\le c \left(r,\alpha,\Omega,\norm{u}_{L^{\alpha}(Q)} \right). \label{le460abc}
 \end{equation}
 \label{pro46}
\end{proposition}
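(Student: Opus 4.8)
The plan is to run a Moser--Serrin iteration based on the energy inequality \eqref{p103} of Lemma~\ref{lem12}, taking the cut--off $\eta\equiv1$ there (admissible, since $\eta(Q)=\{1\}\subset[0,1]$), and using the modified auxiliary functions $F_{s,l},G_{s,l}$ (for $s>1$) and $F_{s},G_{s}$ (for $s\in(2/3-\delta,1]$) of Definitions~\ref{def61} and~\ref{defs1}. It suffices to bound $\|u^{+}\|_{L^{\infty}(Q)}$: since $-u\in V_{B,A,T}(Q)\cap L^{\alpha}(Q)$ and testing \eqref{le460} against $-\psi$ shows that $-u$ also satisfies \eqref{le460}, the estimate obtained for $u^{+}$ applies verbatim to $(-u)^{+}=u^{-}$, and $\|u\|_{L^{\infty}(Q)}=\max\{\|u^{+}\|_{L^{\infty}(Q)},\|u^{-}\|_{L^{\infty}(Q)}\}$.

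\emph{One step.} Fix $s>2/3-\delta$ and take $F=F_{s}$ if $s\le1$, or $F=F_{s,l}$ with $l\to\infty$ if $s>1$ (then $F_{s,l}(u^{+})\uparrow(u^{+})^{s}$ by \eqref{ss4}, and since $F_{s,l}\le|\cdot|^{s}$ and $|F'_{s,l}|\le s|\cdot|^{s-1}$, inequality \eqref{p103} passes to the limit by monotone/dominated convergence); set $v=F\circ u^{+}$ and $\varphi=G\circ u^{+}$. With $\eta\equiv1$ the $\nabla\eta$--terms in \eqref{p103} vanish; on $\{u>0\}$ one has $|\nabla_{x}u|=|\nabla_{x}u^{+}|$, $|u|=u^{+}$, and (since $\nabla_{x}v=F'(u^{+})\nabla_{x}u^{+}$ and $G=FF'$) $\varphi|\nabla_{x}u^{+}|=v|\nabla_{x}v|$, so the term $c_{0}\int_{\{u>0\}}a_{0}v|\nabla_{x}v|\,dz$ in the right side of \eqref{p103} can be split by Young's inequality and absorbed, via \eqref{c1}, into $\int_{Q}\sum_{i,j}\partial_{i}v\,\partial_{j}v\,b_{ij}\,dz$. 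What remains is
\[
\Bigl(\int_{Q}v^{r}\,dz\Bigr)^{2/r}\le C(r,Q,A)^{2}\Bigl[c_{0}^{2}\int_{Q}b^{-1}a_{0}^{2}v^{2}\,dz+2c_{0}\int_{Q}(a_{1}u^{+}+a_{2})\varphi\,dz\Bigr].
\]
Using \eqref{cc8} ($b^{-1}a_{0}^{2},a_{1},a_{2}\le a$), the relations \eqref{ss3}--\eqref{ss4} (or \eqref{sss3}--\eqref{sss4}) — notably $\overline{F}(u^{+})\le F^{1/s}(u^{+})\le\overline{F}(u^{+})+k_{0}$ and $|G(u^{+})|\le k_{s}F^{2-1/s}(u^{+})=k_{s}\bigl(F^{1/s}(u^{+})\bigr)^{2s-1}$, both trivial when $F=|\cdot|^{s}$ — together with Hölder's inequality with the conjugate exponents $\tfrac{\overline{r}}{\overline{r}-2}$ and $\tfrac{\overline{r}}{2}$, the right-hand side is bounded by $\gamma_{s}\,\|a\|_{L^{\overline{r}/(\overline{r}-2)}(Q)}\bigl(\Psi(s\overline{r})^{2s}+E^{2s}\bigr)$, where $\Psi(q):=\|u^{+}\chi_{\{u^{+}>1\}}\|_{L^{q}(Q)}$, $E:=\max\{|Q|^{1/\overline{r}},1\}$, and $\gamma_{s}$ is at most polynomial in $s$ (and stays bounded for $s\le1$, the only $s$--dependence there running over a compact interval). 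Since $\overline{F}(u^{+})\le F^{1/s}(u^{+})$ gives $v^{r}\ge(u^{+})^{rs}$ on $\{u^{+}>1\}$, the left-hand side is $\ge\Psi(rs)^{rs}$. Writing $\kappa:=r/\overline{r}>1$ and $q:=s\overline{r}$ (so $rs=\kappa q$) and raising to the power $1/(\kappa q)$, we obtain the recursion
\[
\Psi(\kappa q)\le\bigl(2\gamma_{q/\overline{r}}\,\|a\|_{L^{\overline{r}/(\overline{r}-2)}(Q)}\bigr)^{1/(\kappa q)}\,\max\{\Psi(q),E\}.
\]

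\emph{Iteration.} Put $q_{n}:=\kappa^{n}\alpha$; by \eqref{cc9}, $s_{n}:=q_{n}/\overline{r}\ge\alpha/\overline{r}>2/3-\delta$ for every $n$, and $q_{n}\to\infty$, so the step above applies at each stage ($F_{s_{n}}$ while $s_{n}\le1$, which happens for only finitely many $n$ since $q_{n}$ grows geometrically, and $F_{s_{n},l}$ with $l\to\infty$ afterwards). Starting from $\Psi(\alpha)\le\|u^{+}\|_{L^{\alpha}(Q)}<\infty$, an induction with $M_{n}:=\max\{\Psi(q_{n}),E\}$ yields $M_{n+1}\le A(q_{n})M_{n}$ with $A(q)=(2\gamma_{q/\overline{r}}\|a\|_{L^{\overline{r}/(\overline{r}-2)}(Q)})^{1/(\kappa q)}$; because $\gamma_{q/\overline{r}}$ is at most polynomial in $q$, $\ln A(q_{n})=O(n\kappa^{-n})$ is summable, so $\prod_{n}\max\{A(q_{n}),1\}$ converges and $\sup_{N}M_{N}\le c_{1}\max\{\|u^{+}\|_{L^{\alpha}(Q)},E\}$ for a constant $c_{1}$ depending only on $r,\overline{r},\overline{t},\alpha,c_{0},k_{0},|Q|,\|a\|_{L^{\overline{r}/(\overline{r}-2)}(Q)}$ and $C(r,Q,A)$. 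Letting $N\to\infty$ and using $\|g\|_{L^{q}(Q)}\to\|g\|_{L^{\infty}(Q)}$ on the finite--measure space $Q$, we get $\esssup_{\{u^{+}>1\}}u^{+}\le c_{1}\max\{\|u^{+}\|_{L^{\alpha}(Q)},E\}$, hence $\|u^{+}\|_{L^{\infty}(Q)}\le\max\{1,c_{1}\max\{\|u^{+}\|_{L^{\alpha}(Q)},E\}\}=:c(r,\alpha,\Omega,\|u\|_{L^{\alpha}(Q)})$; the same applied to $-u$ bounds $\|u^{-}\|_{L^{\infty}(Q)}$ and yields \eqref{le460abc}.

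\emph{The main obstacle} is the bookkeeping of the iteration constants. The auxiliary functions must be chosen so that the passage from $v^{r}=F(u^{+})^{r}$ to $(u^{+})^{rs}$ on the left, and from $v^{2}$, $\varphi$ to powers of $u^{+}$ on the right, costs no multiplicative factor per step — this is precisely the point of the truncations in $F_{s,l},F_{s}$, of the relations \eqref{ss4} and \eqref{sss4}, and of restricting to the superlevel set $\{u^{+}>1\}$ — whereas the genuinely additive contribution of the source term $a_{2}$ (and of the region $\{u^{+}\le1\}$), which produces the fixed constant $E$ at every step, is absorbed through the substitution $M_{n}=\max\{\Psi(q_{n}),E\}$ rather than being summed. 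One must also keep the crude estimates in the form ``$\le 2^{1/(\kappa q)}$'' so that they contribute convergent products, justify the monotone/dominated passage $l\to\infty$, and handle the finitely many steps with $s\le1$ separately, checking there that the powers of $E$ which arise (of the type $E^{2(1-s)}$) stay bounded because $1-s\in[0,1/3+\delta)$.
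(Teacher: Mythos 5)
Your proposal is correct and follows essentially the same route as the paper: both run a Moser–Serrin iteration on \eqref{p103} with $\eta\equiv1$, use the same auxiliary functions $F_{s,l}$ (letting $l\to\infty$ via \eqref{ss4} and monotone convergence) for $s>1$ and $F_s,G_s$ for $s\in(2/3-\delta,1]$, absorb the $a_0$ term by Young's inequality, apply H\"older against $a\in L^{\overline{r}/(\overline{r}-2)}(Q)$, and treat $u^-$ by the same argument applied to $-u$. The only differences are cosmetic bookkeeping — you track $\Psi(q)=\norm{u^{+}\chi_{\{u^{+}>1\}}}_{L^{q}(Q)}$ and fold the additive constant into $E=\max\{\abs{Q}^{1/\overline{r}},1\}$ and $M_n=\max\{\Psi(q_n),E\}$, whereas the paper works with $\norm{u^{+}}_{L^{s_m\overline{r}}}$ directly and carries $\max\{1,\cdot\}$ through the recursion, splitting the finitely many steps with $s_m\le1$ into a separate case with constant $k_1$ — but these are equivalent ways of controlling the same iteration.
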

  \begin{proof} 
    Let $\kappa \coloneqq \frac{r}{\overline{r}}$, then $\kappa > 1$
    by definition. For $m \in \NN$, define
 \begin{align*}
   s_{m}\coloneqq \kappa^{m}\frac{\alpha}{\overline{r}}\ge
   \frac{\alpha}{\overline{r}}
   \qquad \text{ and } \qquad
   \overline{s}_{m}\coloneqq \kappa^{m}\alpha.
 \end{align*}
 \begin{description}[wide]
 \item[Case 1.] \emph{$s_m > 1$ for all $m \in \NN$.}
   
   % Assume $\alpha > \overline{r}$.
   Fix a non-negative integer $m$.  Let $\eta=1$, $l\in [3,\infty)$,
   $F \coloneqq F_{s_{m},l}$ and $G\coloneqq G_{s_{m},l}$ be defined
   as in Definition \ref{def61} and let
   \begin{align}
     \label{eq:5}
   \varphi_{s_m,l} \coloneqq G \circ u^+, \quad v_{s_m,l} \coloneqq F \circ u^+, \quad
     \text{ and }\quad w \coloneqq u^+.
   \end{align}
   We temporarily suppress the subscripts to lighten the notation,
   that is, we write $\varphi$ and $v$ instead of
   $\varphi_{s_m,l}$ and $v_{s_m,l}$, respectively.
   
   Because $F(0)=G(0)=0$, by Lemma \ref{2w1ba} and Lemma \ref{w1bb}, $w$,  $v,$ and $\varphi$ are in $W_{B,A,T}(Q)$.
   Since \[ \frac{\frac{1}{s_{m}}}{\overline{r}}+\frac{\overline{r}-2}{\overline{r}}+\frac{2-\frac{1}{s_{m}}}{\overline{r}}=1,
   \]
by H\"{o}lder's inequality, we have
\begin{align}
  \int_{\Omega}F(w)^{2-\frac{1}{s_{m}}}a dx
  \le \abs{\Omega}^{\frac{1}{\overline{r}s_{m}}}\norm{a}_{L^{\frac{\overline{r}}{\overline{r}-2}}(\Omega)}\left\{\int_{\Omega}F(w)^{\overline{r}} dx\right\}^{\frac{2-\frac{1}{s_{m}}}{\overline{r}}}.
 \label{c}
 \end{align}
 %Put $\overline{c}_{0}= c_{0}C(r,Q,A)$.
 Using \eqref{p103} with $\eta=1$, we have 
\begin{align*}
    %\label{p104}
  \begin{split}
    % &\left\{\int_{Q} v^{r}dz\right\}^{\frac{2}{r}}\\
    % &\coex{\le}
      &\int_{Q}\sum_{i=1}^{N} \abs{\frac{\partial v}{\partial
    x_{i}}}^{2}bdz\\
    &\coex[\eqref{p103}]{\le}
      c_{0}\int_{Q}
     \left(a_{0}\abs{\nabla_{x} u}+a_{1}\abs{u} +a_{2}
      \right)G(u^{+})dz\\
    &\coex[\eqref{f1b2},\eqref{ss3},\eqref{cc8}]{\le}
      c_{0}\int_{Q}F(w)\abs{\nabla_{x}
      w}\abs{F'(w)}b^{\frac{1}{2}}a^{\frac{1}{2}}dz+
      c_{0}\int_{Q}\abs{w}F(w)\abs{F'(w)} a dz\\
    &\coex{~}\quad + c_{0}\int_{w> 1} \abs{w}F(w)\abs{F'(w)} a dx +
      s_{m}c_{0}\int_{w\le 1} F(w)^{2-\frac{1}{s_{m}}}a dx\\
    &\coex[\text{Young},~\eqref{ss1}]{\le}
      \frac{1}{2} \int_{Q}\abs{\nabla_{x}
      w}^{2}F'(w)^{2}bdz+\frac{c_{0}^{2}}{2 }
      \int_{Q}F(w)^{2}adz\\
    &\coex{~} \quad
      +8s_{m}c_{0}  \int_{Q} v^{2} a dz+
      s_{m}c_{0}\int_{w\le 1} v^{2-\frac{1}{s_{m}}}a dz\\
    &\coex[\substack{\text{H\"{o}lder},~\eqref{c}}]{\le}
      \frac{1}{2}\int_{Q}\abs{\nabla_{x} v}^{2}bdz+\frac{c_{0}^{2}}{2}\norm{a}_{L^{\frac{\overline{r}}{\overline{r}-2}}(Q)}\left\{\int_{Q}F(w)^{\overline{r}}
      dz\right\}^{\frac{2}{\overline{r}}}\\
    &\coex{~} \quad +
      8s_{m}c_{0}\norm{a}_{L^{\frac{\overline{r}}{\overline{r}-2}}(Q)}\left\{\int_{Q}F(w)^{\overline{r}}
      dz\right\}^{\frac{2}{\overline{r}}}\\
    &\coex{~}\quad +s_{m}c_{0}^{2}\abs{Q}^{\frac{1}{\overline{r}s_{m}}}\norm{a}_{L^{\frac{\overline{r}}{\overline{r}-2}}(Q)}\left\{\int_{Q}F(w)^{\overline{r}}
      dz\right\}^{\frac{2-\frac{1}{s_{m}}}{\overline{r}}}\\
    &\coex[s_m > 1,~\eqref{le462aazz2}]{\le}
      \frac{1}{2} \int_{Q}|\nabla_{x}
      v|^{2}bdz+9s_{m}c_{0}^{2}\norm{a}_{L^{\frac{\overline{r}}{\overline{r}-2}}(Q)}\left\{\int_{Q}v^{\overline{r}}
      dz\right\}^{\frac{2}{\overline{r}}}\\
    &\coex{~} \quad
      +
      s_{m}c_{0}^{2}\abs{Q}^{\frac{1}{\overline{r}s_{m}}}\norm{a}_{L^{\frac{\overline{r}}{\overline{r}-2}}(Q)}\left\{\int_{Q}v^{\overline{r}}
      dz\right\}^{\frac{2-\frac{1}{s_{m}}}{\overline{r}}}\\
    &\coex[2-\frac{1}{s_{m}}\le 2]{\le}
      \frac{1}{2} \int_{Q}\abs{\nabla_x v}^{2}bdz +
      9s_{m}c_{0}^{2} \left[\left(1+\abs{Q}^{\frac{1}{\overline{r}s_{m}}} \right)\norm{a}_{L^{\frac{\overline{r}}{\overline{r}-2}}(Q)} \right]\max\left\{1,\left\{\int_{Q} v^{\overline{r}}dz\right\}^{\frac{2}{\overline{r}}}\right\}
  \end{split}
\end{align*} 
  or
  \[
\int_{Q}\abs{\nabla_x v}^{2} bdz \le  18s_{m}c_{0}^{2}\left[\left(\abs{Q}+2 \right)\norm{a}_{L^{\frac{\overline{r}}{\overline{r}-2}}(Q)} \right]\max\left\{1,\left\{\int_{Q}v^{\overline{r}}dz\right\}^{\frac{2}{\overline{r}}}\right\}
  \]
Thus
\[
  \left\{ \int_{Q}v^{r}dz \right\}^{\frac{2}{r}}
 \coex[\eqref{cb}]{\le} C_{1}^{2}s_{m}\max\left\{1,\left\{\int_{Q}v^{\overline{r}}dz\right\}^{\frac{2}{\overline{r}}}
  \right\}
 \] 
  or 
\begin{align*}\left\{\int_{Q}v^{r} dz\right\}^{\frac{1}{r}}
    \coex[s_{m}\ge 1]{\le}  C_{1}s_{m}\max\left\{1,\left\{\int_{Q}v^{\overline{r}}dz\right\}^{\frac{1}{\overline{r}}}\right\}\qquad\forall~m=0,1,2,\cdots
%\label{le465}
\end{align*}
where
\[C_{1} \coloneqq \left\{18C(r,Q,A)^2c_{0}^{2} \left(\abs{Q}+2
    \right)\norm{a}_{L^{\frac{\overline{r}}{\overline{r}-2}}(Q)}+1\right\}^{\frac{1}{2}}\ge
  1. \]
Bringing back the subscripts that were suppressed after \eqref{eq:5}, we have
\begin{align}
  \left\{\int_{Q}v_{s_m,l}^{r} dz\right\}^{\frac{1}{r}}
    \coex[s_{m}\ge 1]{\le}  C_{1}s_{m}\max\left\{1,\left\{\int_{Q}v_{s_m,l}^{\overline{r}}dz\right\}^{\frac{1}{\overline{r}}}\right\}\qquad\forall~m=0,1,2,\cdots
\label{le465}
\end{align}
 By \eqref{f1b} and \eqref{ss4}, the sequence
 $\left\{v_{s_{m},3n}\right\}_n$ is  increasing  and converges  to
 $w^{s_{m}}$ a.e. $z \in Q$. Thus by  Lebesgue Monotone Convergence Theorem and \eqref{le465}, we have 
 \[\left\{\int_{Q} w^{s_{m}r}  dz\right\}^{\frac{1}{r}}\le  
 C_{1}s_{m}\max\left\{1,\left\{\int_{Q}w^{s_{m}\overline{r}}  dz\right\}^{\frac{1}{\overline{r}}}\right\}  \qquad\forall~m\ge 0
\]
or
\[\left\{\int_{Q} w^{s_{m}r} dz\right\}^{\frac{1}{s_{m}r}}\le
 C_{1}^{\frac{1}{s_{m}}}s_{m}^{\frac{1}{s_{m}}}\max\left\{1,\left\{\int_{Q}w^{s_{m}\overline{r}}  dz\right\}^{\frac{1}{s_{m}\overline{r}}}\right\}  \qquad\forall~m\ge 0
 \]
Recall $\kappa=\frac{r}{\overline{r}}>1$ and $s_{m}=
\kappa^{m}\frac{\alpha}{\overline{r}}$, so $s_{m}r=
\kappa^{m+1}\alpha$, $s_{m}\overline{r}= \kappa^{m}\alpha$,
$\frac{1}{s_{m}}=\frac{\overline{r}}{\alpha}\frac{1}{\kappa^{m}} $, and % $\frac{m}{s_{m}}=\frac{\overline{r}}{\alpha}\frac{m}{\kappa^{m}} $, 
\begin{align*}
C_{1}^{\frac{1}{s_{m}}} s_{m}^{\frac{1}{s_{m}}}= \left(
  C_{1}^{\frac{\overline{r}}{\alpha}} \right)^{\frac{1}{\kappa^{m}}} \left[ \kappa^{m}\frac{\alpha}{\overline{r}}
\right]^{\frac{\overline{r}}{\alpha}\frac{1}{\kappa^{m}}}
= \left( C_{1}^{\frac{\overline{r}}{\alpha}}
  \right)^{\frac{1}{\kappa^{m}}} \left[\left(\frac{\alpha}{\overline{r}}
  \right)^{\frac{\overline{r}}{\alpha}} \right]^{\frac{1}{\kappa^{m}}}
\left[ \left(\frac{r}{\overline{r}}
  \right)^{\frac{\overline{r}}{\alpha}}
  \right]^{\frac{m}{\kappa^{m}}}
  = c_{1}^{\frac{1}{\kappa^{m}}}c_{2}^{\frac{m}{\kappa^{m}}},
\end{align*}
where $c_{1}\coloneqq
C_{1}^{\frac{\overline{r}}{\alpha}} \left(\frac{\alpha}{\overline{r}} \right)^{\frac{\overline{r}}{\alpha}}$
and $c_{2}\coloneqq\left(\frac{r}{\overline{r}} \right)^{\frac{\overline{r}}{\alpha}}$.
We obtain
\begin{equation}
  \left\{\int_{Q}  w^{\kappa^{m+1}\alpha} dz\right\}^{\frac{1}{\kappa^{m+1}\alpha} }  \le  
  c_{1}^{\frac{1}{\kappa^{m}}}c_{2}^{\frac{m}{\kappa^{m}}}\max\left\{1,\left\{\int_{Q} w^{\kappa^{m}\alpha}  dz\right\}^{\frac{1}{\kappa^{m}\alpha}}\right\}\hk\forall~m\ge 0.
 \label{le465b}
\end{equation}
 By mathematical induction, we get
 \[
   \left\{\int_{Q}w^{\kappa^{n}\alpha}dz\right\}^{\frac{1}{\kappa^{n}\alpha}}
   \le
c_{1}^{\sum_{j=1}^{n}\frac{1}{\kappa^{j}}}c_{2}^{\sum_{j=1}^{n}\frac{j}{\kappa^{j}}}\max\left\{1,\left\{\int_{Q}w^{\alpha}
     dz\right\}^{\frac{1}{\alpha}}\right\}\qquad\forall~n\in\NN.
\]
    Therefore,  by Problem 5 in \cite[p.71]{rudinRealComplexAnalysis1987}, it implies 
    \[
      \norm{u^{+}}_{L^{\infty}(Q)}
      \coex[w = u^+]{\le}      c_{1}^{\sum_{j=1}^{\infty}\frac{1}{\kappa^{j}}}c_{2}^{\sum_{j=1}^{\infty}\frac{j}{\kappa^{j}}}\max\left\{1,\left\{\int_{Q}|u|^{\alpha}
          dz\right\}^{\frac{1}{\alpha}}\right\}.
    \]
    
 Replacing $w=u^{+}$ by $w=u^{-}$.  We have 
 \begin{align*}
   \frac{\partial w}{\partial x_{j}}(x)
   &\coex{=} \begin{cases}
     \displaystyle -\frac{\partial u}{\partial x_{j}}(x) \qquad&{\forall~x \in Q^{-},}\\
     0&{\forall~x \in Q\setminus Q^{-}}.
   \end{cases}\\
   \frac{\partial v}{\partial x_{j}}(x)
   &\coex{=} \begin{cases}
\displaystyle - F'(w(x))\frac{\partial u}{\partial x_{j}}(x) \qquad&{\forall~x \in Q^{-},}\\
0&{\forall~x \in Q\setminus Q^{-}}
   \end{cases}\\
   &\coex[F'(0)=0]{=}
     F'(w(x))\frac{\partial w}{\partial x_{j}}(x)\qquad\forall~x \in
     Q.\\
   \frac{\partial \varphi}{\partial x_{j}}(x)
   &\coex{=}\begin{cases}
\displaystyle - G'(w(x))\frac{\partial u}{\partial x_{j}}(x)  \qquad&{\forall~x \in Q^{-},}\\
0&{\forall~x \in Q\setminus Q^{-}.}
\end{cases}
 \end{align*}
Using $\varphi = G \circ w$ as a test function  in \eqref{p103},   we
have 
\begin{align*}
  \int_{Q}\abs{\nabla_x v}^{2} bdz
  &\coex[\eqref{c1}]{=} \int_{Q}\frac{\partial v}{\partial x_{i}}
    \frac{\partial v}{\partial x_{j}}b_{ij}dz\\
  &\coex{\le} c_{0}\int_{Q}\abs{\nabla_{x} u} \abs{\varphi} a_{0}
    dz+c_{0}\int_{Q} \left(\abs{u}+1 \right)
    \abs{\varphi}a dz
\end{align*}
Now arguing as above, we get
\[
  \norm{u^{-}}_{L^{\infty}(Q)}\leq
  c_{1}^{\sum_{j=1}^{\infty}\frac{1}{\kappa^{j}}}c_{2}^{\sum_{j=1}^{\infty}\frac{j}{\kappa^{j}}}\max\left\{1,\left\{\int_{Q}\abs{u}^{\alpha}
      dz\right\}^{\frac{1}{\alpha}}\right\}.
\]
%Thus we obtain the proposition for $\alpha > \overline{r}$.
Thus we obtain the proposition for this case.
\item[Case 2.] \emph{There exists $\overline{m} \in \NN$ such that
    $s_{\overline{m}} \le 1$}.

Since $\kappa > 1$ and $s_m = \kappa^m \frac{\alpha}{\overline{r}}
\ge \frac{\alpha}{\overline{r}}$, this case occurs if and only if
$\frac{\alpha}{\overline{r}} \le 1$ or $\alpha \in [1,\bar{r}]$. Note that $s_m$ is an increasing
sequence and $\lim_{m \to \infty}
s_m = +\infty$, the number $m_{\alpha} \coloneqq \max \left\{ m \colon s_m
  \le 1 \right\}$ is finite.

Applying the argument in Case 1 for the sequence
$s'_m \coloneqq s_{m-m_{\alpha}}$, we obtain
\begin{align}
\label{eq:3}
  \norm{u^{+}}_{L^{\infty}(Q)}
 \le c_{1}^{\sum_{j=m_{\alpha}+1}^{\infty}\frac{1}{\kappa^{j}}}c_{2}^{\sum_{j=m_{\alpha}+1}^{\infty}\frac{j}{\kappa^{j}}}\max\left\{1,\left\{\int_{Q}w^{\left( \kappa^{m_{\alpha}} \right)\alpha}
      dz\right\}^{\frac{1}{\left( \kappa^{m_{\alpha}} \right)\alpha}}\right\}.
\end{align}
Thus the proposition is proved if we can bound $\left\{\int_{Q}w^{\left( \kappa^{m_{\alpha}} \right)\alpha}
      dz\right\}^{\frac{1}{\left( \kappa^{m_{\alpha}} \right)\alpha}}$
    by a multiple of $\left\{ \int_Q w^{\alpha} dz \right\}^{\frac{1}{\alpha}}.$

Fix $\NN \ni m \le m_{\alpha}$. Thanks to \eqref{cc9}, 
\begin{align*}
s_m = \kappa^m \frac{\alpha}{\bar{r}} \coex[\eqref{cc9}]{\in} \left( \frac{2}{3}-\delta,1
  \right] \subset \left( \frac{1}{2}, 1 \right],
\end{align*}
which allows us to utilize the auxiliary functions $F=F_{s_{m}}$,
$G=G_{s_{m}}$ and $c_{0}$ defined in  Definition \ref{defs1} and Lemma
\ref{lemfs1}. Since $F(0)=G(0)=0$, by Lemma \ref{2w1ba} and Lemma
\ref{w1bb}, $w \coloneqq u^{+}$,  $v\coloneqq F\circ w$ and $\varphi\coloneqq G\circ w$ are in $W_{B,A,T}(Q)$.  We have 
 \begin{equation}
 \frac{\partial v}{\partial x_{j}}(x)=  F'(w)\frac{\partial w}{\partial x_{j}},
\label{2le462aazz2}\end{equation}
\begin{equation} \frac{\partial \varphi}{\partial x_{j}}(x)= G'(w)\frac{\partial w}{\partial x_{j}}. 
\label{2le462aazz3}\end{equation}\hk
By choosing $\eta =1$ in \eqref{p103},   we have
\begin{align*}
  \begin{split}
    &\int_{Q}\sum_{i=1}^{N} \abs{\frac{\partial v}{\partial
      x_{i}}}^{2}bdz\\
    &\coex[\eqref{p103}]{\le}
      c_{0}\int_{Q}
      \left(a_{0}\abs{\nabla_{x} u}+a_{1}\abs{u} +a_{2}
      \right)G(u^{+})dz\\
    &\coex[\substack{\eqref{fs4},~\eqref{sss3},\\\eqref{cc8}}]{\le}
      c_{0}\int_{Q}F(w)\abs{\nabla_{x}
      w}\abs{F'(w)}b^{\frac{1}{2}}a^{\frac{1}{2}}dz+
      c_{0}\int_{Q}\abs{w}F(w)\abs{F'(w)} a dz\\[-1em]
    &\coex{~}\quad + 
      c_{0}k_{s_{m}}\int_{w\le 1} F(w)^{2-\frac{1}{s_{m}}}a dx +
      c_{0}\int_{w> 1} \abs{w}F(w)\abs{F'(w)} a dx\\
    &\coex[\substack{\eqref{cc8},~\eqref{c},\\\eqref{sss1},~\text{H\"{o}lder}}]{\le}
    c_{0}\int_{Q} F(w)\abs{\nabla_{x} w}\abs{F'(w)}
    b^{\frac{1}{2}}a^{\frac{1}{2}} dz\\[-1em]
    &\coex{~}\quad +
      c_{0}k_{s_{m}}\abs{Q}^{\frac{1}{\overline{r}s_{m}}}\norm{a}_{L^{\frac{\overline{r}}{\overline{r}-2}}(Q)}\left\{\int_{\abs{w}\le
      1}F(w)^{\overline{r}}
      dz\right\}^{\frac{2-\frac{1}{s_{m}}}{\overline{r}}}+10c_{0}  \int_{Q}
      F(w)^{2} a dz\\
    &\coex[\text{Young}]{\le}
      \frac{1}{2} \int_{Q}\abs{\nabla_{x}
      w}^{2}F'(w)^{2}bdz+\frac{1}{2}c_{0}^{2} \int_{Q}F(w)^{2}adz\\
    &\coex{~}\quad
      +c_{0}k_{s_{m}}\abs{Q}^{\frac{1}{\overline{r}s_{m}}}\norm{a}_{L^{\frac{\overline{r}}{\overline{r}-2}}(Q)}\left\{\int_{\abs{w}\le
      1}F(w)^{\overline{r}}
      dz\right\}^{\frac{2-\frac{1}{s_{m}}}{\overline{r}}} \\
    &\coex{~}\quad+
      10c_{0}\norm{a}_{L^{\frac{\overline{r}}{\overline{r}-2}}(Q)}\left\{\int_{Q}F(w)^{\overline{r}}
      dz\right\}^{\frac{2}{\overline{r}}}\\
    &\coex[\eqref{2le462aazz2}]{\le}
      \frac{1}{2} \int_{Q}\abs{\nabla_{x}v}^{2}bdz +c_{0}(c_{0}+10)\norm{a}_{L^{\frac{\overline{r}}{\overline{r}-2}}(Q)}\left\{\int_{Q}v^{\overline{r}}dz\right\}^{\frac{2}{\overline{r}}}\\
    &\coex{~}\quad +
      c_{0}k_{s_{m}}\abs{Q}^{\frac{1}{\overline{r}s_{m}}}\norm{a}_{L^{\frac{\overline{r}}{\overline{r}-2}}(Q)}\left\{\int_{\abs{w}\le
      1}F(w)^{\overline{r}}
      dz\right\}^{\frac{2-\frac{1}{s_{m}}}{\overline{r}}}\\
    &\coex{\le}
      \frac{1}{2} \int_{Q} |\nabla_{x} v|^{2}bdz \\
    &\coex{~}\quad +
      \left[c_{0}(c_{0}+10) +c_{0}k_{s_{m}}\abs{Q}^{\frac{1}{\overline{r}s_{m}}}\norm{a}_{L^{\frac{\overline{r}}{\overline{r}-2}}(Q)} \right]\max\left\{1,\left\{\int_{Q}v^{\overline{r}}dz\right\}^{\frac{2}{\overline{r}}}\right\},
  \end{split}
\end{align*}
which implies
\[
  \int_{Q}\abs{\nabla_{x} v}^{2} bdz \le 2\left[c_{0}(c_{0}+10)+c_{0}k_{s_{m}}\abs{Q}^{\frac{1}{\overline{r}s_{m}}}\norm{a}_{L^{\frac{\overline{r}}{\overline{r}-2}}(Q)} \right]\max\left\{1,\left\{\int_{Q}v^{\overline{r}}dz\right\}^{\frac{2}{\overline{r}}}\right\}.
  \]
 By \eqref{cb}, we have
\[   \left\{\int_{Q}v^{r} dz\right\}^{\frac{2}{r}}\le
  2C(r,Q,A)^{2}c_{0}
  \left[(c_{0}+10)+k_{s_{m}}\abs{Q}^{\frac{1}{\overline{r}s_{m}}}\norm{a}_{L^{\frac{\overline{r}}{\overline{r}-2}}(Q)}
  \right]\max\left\{1,\left\{\int_{Q}v^{\overline{r}}dz\right\}^{\frac{2}{\overline{r}}}\right\}. \]

Put $M =\max\left\{k_{s_{0}},\cdots,k_{s_{m_{\alpha}+1}}\right\}$ and 
\[
  c_{\alpha}= \left\{ 2C(r,Q,A)^{2}c_{0}
  \left[(c_{0}+10)+M(1+\abs{Q})\norm{a}_{L^{\frac{\overline{r}}{\overline{r}-2}}(Q)}
  \right] \right\}^{\frac{1}{2}}.
\]
Since $s_{m}r= \left(\frac{r}{\bar{r}}
\right)^{m}\frac{\alpha}{\bar{r}}r=\kappa^{m+1}\alpha$ and
$\bar{r}s_{m}= \bar{r} \kappa^m \frac{\alpha}{\bar{r}}=\kappa^{m}\alpha$, we get  
\begin{align}  
  \label{ff}
  \begin{split}
    \left\{\int_{Q}\left( v^{\frac{1}{s_{m}}}
    \right)^{\kappa^{m+1}\alpha}
    dz\right\}^{\frac{1}{\kappa^{m+1}\alpha}}
    &\coex{\le}
      c_{\alpha}^{\frac{1}{\kappa^{m}\alpha}}\max\left\{1,\int_{Q}
      \left(v^{\frac{1}{s_{m}}}
      \right)^{\kappa^{m}\alpha}dz\right\}^{\frac{1}{\kappa^{m}\alpha}}
    \\
    &\coex{\le} c_{\alpha}^{\frac{1}{\kappa^{m}\alpha}}\left\{\int_{Q}
      \left(v^{\frac{1}{s_{m}}} \right)^{\kappa^{m}\alpha}dz\right\}^{\frac{1}{\kappa^{m}\alpha}}+c_{\alpha}^{\frac{1}{\kappa^{m}\alpha}}.
  \end{split}
\end{align} 
Since $v^{\frac{1}{s_{m}}}= F_{s_{m}}(w)^{\frac{1}{s_{m}}}$, we get
\begin{align*}
  \begin{split}
    \left\{\int_{Q}w
    ^{\kappa^{m+1}\alpha} dz\right\}^{\frac{1}{\kappa^{m+1}\alpha}}
    &\coex[\eqref{fs1z}]{\le}
      \left\{\int_{Q} \left[\overline{F}(w)+1 \right]
      ^{\kappa^{m+1}\alpha} dz\right\}^{\frac{1}{\kappa^{m+1}\alpha}}\\
    &\coex[\text{Minkowski}]{\le}
      \left\{\int_{Q}\overline{F}(w)
^{\kappa^{m+1}\alpha} dz\right\}^{\frac{1}{\kappa^{m+1}\alpha}}+
      \abs{Q}^{\frac{1}{\kappa^{m+1}\alpha}}\\
    &\coex[\eqref{sss4}]{\le}
      \left\{\int_{Q} \left(v^{\frac{1}{s_{m}}} \right)^{\kappa^{m+1}\alpha}
      dz\right\}^{\frac{1}{\kappa^{m+1}\alpha}}  +
      \abs{Q}^{\frac{1}{\kappa^{m+1}\alpha}}\\
    &\coex[\eqref{ff}]{\le}
      c_{\alpha}^{\frac{1}{\kappa^{m}\alpha}}\left\{\int_{Q} \left(v^{\frac{1}{s_{m}}} \right)^{\kappa^{m}\alpha}dz\right\}^{\frac{1}{\kappa^{m}\alpha}}+c_{\alpha}^{\frac{1}{\kappa^{m}\alpha}}+
      \abs{Q}^{\frac{1}{\kappa^{m+1}\alpha}}\\
    &\coex[\eqref{sss4}]{\le}
      c_{\alpha}^{\frac{1}{\kappa^{m}\alpha}}\left\{\int_{Q}(w+k_{0})^{\kappa^{m}\alpha}dz\right\}^{\frac{1}{\kappa^{m}\alpha}}+
      c_{\alpha}^{\frac{1}{\kappa^{m}\alpha}}+
      \abs{Q}^{\frac{1}{\kappa^{m+1}\alpha}}\\
    &\coex[\text{Minkowski}]{\le}
      c_{\alpha}^{\frac{1}{\kappa^{m}\alpha}}\left\{\int_{Q}w^{\kappa^{m}\alpha}dz\right\}^{\frac{1}{\kappa^{m}\alpha}}+
      c_{\alpha}^{\frac{1}{\kappa^{m}\alpha}} \left(k_{0}\abs{Q}^{\frac{1}{\kappa^{m}\alpha}}
      +1 \right)+ \abs{Q}^{\frac{1}{\kappa^{m+1}\alpha}}\\
    &\coex[\kappa\ge 1]{\le}
      k_{1}\left\{\int_{Q}w^{\kappa^{m}\alpha}dz\right\}^{\frac{1}{\kappa^{m}\alpha}}+k_{1}\hk\forall~m\le m_{\alpha}, 
  \end{split}
\end{align*}
where $k_{1}\coloneqq\left(c_{\alpha}+ (k_{0}+1)(1+\abs{Q})+2
\right)^{\frac{2}{\alpha}}$.

By the mathematical induction, we obtain
\begin{align}
\label{eq:4}
  \begin{split}
    \left\{\int_{Q}w ^{\left( \kappa^{m_{\alpha}} \right)\alpha}
    dz\right\}^{\frac{1}{\left( \kappa^{m_{\alpha}} \right)\alpha}}
    &\coex{\le}
  k_{1}^{m_{\alpha}}\left\{\int_{Q}w^{\alpha}dz\right\}^{\frac{1}{\alpha}}+
      \sum_{i=1}^{m_{\alpha}}k_{1}^{i}\\
    &\coex[w = u^{+}]{\le}
  k_{1}^{m_{\alpha}}\left\{\int_{Q}\abs{u}^{\alpha}dz\right\}^{\frac{1}{\alpha}}+
      \sum_{i=1}^{m_{\alpha}}k_{1}^{i}.
  \end{split}
\end{align}

From \eqref{eq:3} and \eqref{eq:4}, we obtain
\begin{align*}
\norm{u^{+}}_{L^{\infty}(Q)}
 \le c_{1}^{\sum_{j=m_{\alpha}+1}^{\infty}\frac{1}{\kappa^{j}}}c_{2}^{\sum_{j=m_{\alpha}+1}^{\infty}\frac{j}{\kappa^{j}}}\max\left\{1,k_{1}^{m_{\alpha}}\left\{\int_{Q}\abs{u}^{\alpha}dz\right\}^{\frac{1}{\alpha}}+
  \sum_{i=1}^{m_{\alpha}}k_{1}^{i}\right\}.
\end{align*}
By a similar argument as above, we get the same bound for
$w=u^{-}$. The proposition is proved.
\end{description}
\end{proof}

Theorem \ref{th1} is a direct consequence of Proposition \ref{pro46}.

\begin{remark}
  Let  $\sigma_{N}\coloneqq\abs{B(0,1)}$ and $\beta \ge 2$. For $5 \le k \in
  \NN$, let $\{x_{5}, x_{6},\cdots\}$ be a dense subset of $\RR^{N}$ and
  \begin{align*}
    r_{k}&\coloneqq 2^{-k^{4k\beta}},\\
    v_{k}&\coloneqq\abs{B\left(x_{k},r_{k} \right)}=
           2^{-Nk^{4k\beta}}\sigma_{N},\\
    \overline{k}
    &\coloneqq
      \begin{cases}
        \frac{k+3}{2}, &\text{ if } k \text{ is odd},\\
        \frac{k+2}{2}, &\text{ if } k \text{ is even},
      \end{cases}\\
  \end{align*}
  On $\RR^{N}$, define 
  \begin{align*}
    b
    &\coloneqq 1,  \intertext{and}
    \overline{b}
    &\coloneqq   \chi_{\RR^{N} \backslash (\bigcup\limits_{k \ge 5}^\infty   B(x_{k},2r_{k})\backslash B(x_{k},r_{k}))}+   
\sum_{k=5}^{\infty}k^{4k}\chi_{B(x_{k},2r_{k})\backslash B(x_{k},r_{k}) }.
  \end{align*}
We have 
\begin{align*}
  \int_{B(x_{k},2r_{k})} k^{4\beta k}dz
  &\coex{=}  k^{4\beta k}2^{N}v_{k}
    \coex{=} 2^{N} \sigma_{N} k^{4\beta k}2^{-k^{4\beta
    k}}2^{-(N-1)k^{4\beta k}}  \\
  &\coex[N\ge 2]{\le}
    2^{N}\sigma_{N}2^{-k^{4\beta k}}
    \coex[N\ge 2,\beta\ge 2, k\ge 5]{\le} 2^{N}\sigma_{N}2^{-k},
\end{align*}
thus $\overline{b} \in L_{\text{loc}}^{\beta}(\RR^N)$.

To show that $\overline{b}$ is not a Muckenhoupt weight, it is enough
to show that $\overline{b} dz$ is not a doubling measure. To this end,
we calculate 
\begin{align*}
  \int_{B(x_{k},2r_{k})\setminus B(x_{k},r_{k})} k^{4k}dz
  &\coex{=} k^{4k}\sigma_{N}(2^{N(1-k^{4\beta k})}-2^{-Nk^{4\beta
    k}})\\
  &\coex{=} k^{4k}
    \sigma _{N}2^{-Nk^{4\beta k}}(2^{N}-1)  \\
  &\coex[N\ge 2]{>}3k^{4k}v_{k},\\
  \sum_{j=5}^{\overline{k}}\int_{B(x_{k},2r_{k})}j^{4j}dz
  &\coex[\overline{k}\le k-1]{\le}
    2^{N}\sum_{j=1}^{k-1}(k-1)^{4(k-1)}v_{k}
  \coex{\le} 2^{N}k^2k^{-4}k^{4k}v_{k}\\
  &\coex{\le} 2^{N}k^{-2}k^{4k}v_{k},\\
  \sum_{j=\overline{k}+1}^{k-1}\int_{B(x_{k},2r_{k})}j^{4j}dz
  &\coex{=} 2^{N} \sum_{j=\overline{k}+1}^{k-1}j^{4j}v_{k}
  \coex{\le} 2^{N}
    \left(\sum_{j=\overline{k}+1}^{k-1}j^{4j-4k} \right)k^{4k}v_{k}\\
  &\coex{\le} 2^{N} \left(\sum_{j=\overline{k}+1}^{k-1}j^{-4}
    \right)k^{4k}v_{k}\\
    &\coex{\le} 2^{N} \left\{\int_{\overline{k}}^{k}
    t^{-4}dt\right\} k^{4k}v_{k}
    \coex{\le} 2^{N} \overline{k}^{-3}k^{4k}v_{k} \\
  &\coex[\frac{k}{2}\le \overline{k}]{\le} 2^{N+3}k^{-3}k^{4k}v_{k},\\
  \sum_{j=k}^{\infty}\int_{B(x_{j},2r_{j})}j^{4j}dz
  &\coex{=} 2^{N}\sum_{j=k}^{\infty}j^{4j}2^{-Nj^{4\beta j}}\sigma_{N}\\
    &\coex{=}
      2^{N}\sum_{j=k}^{\infty}j^{4j}2^{-j^{4j}}2^{j^{4j}-Nj^{4\beta
      j}+Nk^{4\beta k}}v_{k}\\
  &\coex{=}
    2^{N}\sum_{i=0}^{\infty}(k+i)^{4(k+i)}2^{-(k+i)^{4(k+i)}}2^{(k+i)^{4(k+i)}-N(k+i)^{4\beta
    (k+i)}+Nk^{4\beta k}}v_{k}\\
  &\coex[\beta\ge 2]{\le}
    2^{N}\sum_{i=0}^{\infty}2^{(k+i)^{4(k+i)}-N(k+i)^{4\beta k}(k+i)^{4
    \beta i}+Nk^{4\beta k}}v_{k}\\
  &\coex{\le}
    2^{N}\sum_{i=0}^{\infty}2^{(k+i)^{4(k+i)}-4N(k+i)^{4\beta
    k}(k+i)^{4i}+Nk^{4\beta k}}v_{k}\\
  &\coex{\le}
    2^{N}\sum_{i=1}^{\infty}2^{-N(k+i)^{4\beta k}(k+i)^{4i}}v_{k}\\
  &\coex{\le} 2^{N}\sum_{i=0}^{\infty}2^{-i}v_{k}= 2^{N}v_{k}.
\end{align*}
Thus,
\begin{align*}
  \overline{b}(B(x_{k},2r_{k}))
  &\coloneqq \int_{B(x_{k},2r_{k})}\overline{b}dz \ge
    \int_{B(x_{k},2r_{k})\setminus B(x_{k},r_{k})} k^{4k}dz\ge
    3k^{4k}v_{k},\\
  \overline{b}(B(x_{k},r_{k}))
  &= \int_{B(x_{k},r_{k})} \overline{b}dz \\
 &\le  \sum_{j=5}^{k-1}\int_{B(x_{k},r_{k})} j^{4j}dz+
    \int_{B(x_{k},r_{k})}dz+
    \sum_{j=1}^{\infty}\int_{B(x_{k+j},2r_{k+j})} (k+j)^{4(k+j)}dz\\
  &\le 2^{N} \left(k^{-2}+2^{3}k^{-3}+ 2^{-N} k^{-4k}+
 k^{-4k} \right)k^{4k}v_{k},
\end{align*}
so 
\begin{align*}
\lim_{k\to\infty}
 \frac{\overline{b} \left(B(x_{k},2r_{k}) \right)}{\overline{b} \left(B(x_{k},r_{k}) \right)}\ge
 \lim_{k\to\infty}\frac{3}{2^{N}(k^{-2}+ 8k^{-3} +  2k^{-4k})}=\infty.
\end{align*}
In conclusion, $b^{-1} \in L^{\infty}(\RR^{N})$,
$\overline{b} \in L_{loc}^{\beta}(\RR^{N})$, and $\overline{b}$ is not
of class $A_{2}$.  However, $b\vert_{Q}$ and $\overline{b}\vert_{Q}$
satisfy conditions of Theorem \ref{th1} for every bounded open subset
$Q$ of $\RR^{N}$ and a sufficiently large $\beta$.
 \label{r4}
\end{remark}

\begin{remark} Let $\Omega\coloneqq B(0,1)$, $\overline{b}$ be as in
  Remark \ref{r4}, $\gamma\in (0,1)$ and
  $b(x)\coloneqq \dist(x,\partial\Omega)^{\gamma}$. Then $b$ and
  $\overline{b}$ satisfy condition  \eqref{eq:ccc} for sufficiently small $\gamma$ and sufficiently large
  $\beta$. Let $b_{ij}\coloneqq b_{i}\delta_{i}^{j}$ for every $i,j$
  in $\{1,\cdots,N\}$, $b_{1}=b$ and
  $b_{2}=\cdots= b_{N}=\overline{b}$. Applying our results, we obtain
  the boundedness for the solutions of a degenerate and non-uniform
  parabolic equation.
\label{r4b}
\end{remark}

% \section{Conclusions}
% \label{sec:conclusions}

% \section*{Acknowledgements}

%\appendix

%\section{Appendix}
%\label{sec:an-appendix}

\bibliographystyle{plain}%{habbrv}
%\bibliography{homogenisation}
\bibliography{regularity}

\end{document}